\newtheorem{theorem}{Theorem}[section]
\newtheorem{lemma}[theorem]{Lemma}
\newtheorem{corollary}[theorem]{Corollary}
\newtheorem{proposition}[theorem]{Proposition}
\theoremstyle{definition}
\newtheorem{definition}[theorem]{Definition}
\newtheorem{remark}[theorem]{Remark}
\numberwithin{equation}{section}
 \theoremstyle{plain}
 \numberwithin{equation}{section} %% Comment out for sequentially-numbered
 \numberwithin{figure}{section} %% Comment out for sequentially-numbered
 \theoremstyle{plain}
 \theoremstyle{remark}
 \newtheorem*{acknowledgement*}{Acknowledgement}
\newcounter{property}
\newenvironment{property}[1][]{\refstepcounter{property}\par\medskip
\noindent\textbf{A\theproperty. #1} \rmfamily}{\medskip}
\newenvironment{property*}[1][]{\par\medskip
\noindent\textbf{A\theproperty#1'. } \rmfamily}{\medskip}
\newcommand{\diam}{{\text{diam}}}
\providecommand{\floor}[1]{\left\lfloor \, #1 \, \right\rfloor}
\def\supp{\operatorname{supp}}
\newcommand{\tom}{{\tilde{\Omega} }}
\newcommand{\ttau}{{\tilde{\tau} }}
\newcommand{\tmu}{{\tilde{\mu} }}
\newcommand{\tT}{{\tilde{T} }}
\newcommand{\tU}{{\tilde{U} }}
\newcommand{\tV}{{\tilde{V} }}
\newcommand{\cA}{{\mathcal A}}
\newcommand{\cC}{{\mathcal C}}
\newcommand{\cE}{{\mathcal E}}
\newcommand{\cF}{{\mathcal F}}
\newcommand{\cG}{{\mathcal G}}
\newcommand{\cH}{{\mathcal H}}
\newcommand{\cL}{{\mathcal L}}
\newcommand{\cN}{{\mathcal N}}
\newcommand{\cU}{{\mathcal U}}
\newcommand{\Om}{{\Omega}}
\newcommand{\om}{{\omega}}
\newcommand{\ve}{{\varepsilon}}
\newcommand{\del}{{\delta}}
\newcommand{\Del}{{\Delta}}
\newcommand{\gam}{{\gamma}}
\newcommand{\Gam}{{\Gamma}}
\newcommand{\vr}{{\varrho}}
\newcommand{\Sig}{{\Sigma}}
\newcommand{\sig}{{\sigma}}
\newcommand{\al}{{\alpha}}
\newcommand{\be}{{\beta}}
\newcommand{\ka}{{\kappa}}
\newcommand{\la}{{\lambda}}
\newcommand{\up}{{\upsilon}}
\newcommand{\bbN}{{\mathbb N}}
\newcommand{\bbZ}{{\mathbb Z}}
\newcommand{\bbI}{{\mathbb I}}
\newcommand{\bfM}{{\bf M}}
\newcommand{\bfY}{{\bf Y}}
\newcommand{\bfX}{{\bf X}}
\def\Sxy{\Sigma^{x,y}_r}
\begin{document}
\title[]{Geometric law for numbers of returns\\
until a hazard under $\phi$-mixing}%
 \vskip 0.1cm
\author{ Yuri Kifer
\quad\quad and\quad\quad\quad\quad Fan Yang\\
\vskip 0.1cm
Institute of Mathematics\quad\quad\quad Department of Mathematics\\
The Hebrew University\quad\quad\quad University of Oklahoma\\
Jerusalem, Israel\quad\quad\quad\quad\quad\quad Norman, OK, USA}%
\address{
Institute of Mathematics, The Hebrew University, Jerusalem 91904, Israel}
\email{ kifer@math.huji.ac.il}
\address{
Department of Mathematics, University of Oklahoma,
 Norman, OK 73012, USA}%
\email{fan.yang-2@ou.edu}%

\thanks{ }
\subjclass[2000]{Primary: 60F05 Secondary: 37D35, 60J05}%
\keywords{Geometric distribution, Poisson distribution, number of
 returns, $\phi$-mixing, shifts.}%
 \date{\today}
\begin{abstract}\noindent
 We consider a $\phi$-mixing shift $T$ on a sequence space $\Om$ and study
the number of returns $\{ T^k\om\in U\}$ to a union $U$ of cylinders
of length $n$ until the first return $\{ T^k\om\in V\}$ to another union $V$ of
cylinder sets of length $m$. It turns out
that if probabilities of the sets $U$ and $V$ are small and of the same order
then the above number of returns has approximately  geometric distribution.
Under appropriate conditions we extend this result for some dynamical systems
to geometric balls and Young towers with integrable tails.
This work is motivated by a number of papers on asymptotical behavior of numbers
of returns to shrinking sets, as well as by the papers
on open systems studying their behavior until an exit through a "hole".
\end{abstract}
%\footnotetext[1]{}
\maketitle
\markboth{Yu.Kifer and F. Yang}{Returns until a hazard}
\renewcommand{\theequation}{\arabic{section}.\arabic{equation}}
\pagenumbering{arabic}

\section{Introduction}\label{sec1}\setcounter{equation}{0}

Let $\xi_0,\,\xi_1,\,\xi_2,...$ be a sequence of independent identically
distributed (i.i.d.) random variables and $\Gam_N,\,\Del_N$ be a sequence
of sets such that $\Gam_N\cap\Del_N=\emptyset$ and
\[
\lim_{N\to\infty}\la^{-1}NP\{\xi_0\in\Gam_N\}=\lim_{N\to\infty}\nu^{-1}N
P\{\xi_0\in\Del_N\}=1.
\]
Then by the classical Poisson limit theorem
\[
S_N^{(\la)}=\sum_{n=0}^{N-1}\bbI_{\Gam_N}(\xi_n)\,\,\mbox{and}\,\,
S_N^{(\nu)}=\sum_{n=0}^{N-1}\bbI_{\Del_N}(\xi_n),
\]
where $\bbI_\Gam$ is the indicator of a set $\Gam$, converge in distribution as
$N\to\infty$ to Poisson random variables with parameters $\la$
and $\nu$, respectively. On the other hand, if
\[
\tau_N=\min\{ n\geq 0:\,\xi_n\in\Gam_N\}
\]
then it turns out that the sum
\[
S_{\tau_N}=\sum_{n=0}^{\tau_N-1}\bbI_{\Del_N}(\xi_n),
\]
which counts returns to $\Del_N$ until the first arrival at $\Gam_N$,
converges in distribution as $N\to\infty$ to a geometric random variable $\zeta$ with the
parameter $p=\la(\la+\nu)^{-1}$, i.e. $P\{\zeta=k\}=(1-p)^kp$.

In \cite{KR1} a more general setup was considered where $\xi_0,\,\xi_1,\,\xi_2,...$
is a $\psi$-mixing stationary process and which included increasing functions
$q_i(n),\, i=1,...,\ell,\, n\geq 0$ taking on integer values on integers
and satisfying $0\leq q_1(n)<q_2(n)<\cdots q_\ell(n)$ with all differences
$q_i(n)-q_{i-1}(n)$ tending to $\infty$ as $n\to\infty$. There
 "nonconventional" sums
\begin{equation}\label{1.1}
S_{\tau_N}=\sum_{n=0}^{\tau_N-1}\prod_{i=1}^\ell\bbI_{\Del_N}(\xi_{q_i(n)})
\end{equation}
were considered with
\begin{equation}\label{1.2}
\tau_N=\min\{ n\geq 0:\,\prod_{i=1}^\ell\bbI_{\Gam_N}(\xi_{q_i(n)})=1\}
\end{equation}
and $\tau_N=\infty$ if the set in braces is empty. Now $S_{\tau_N}$
equals the number $\cN_N$ of multiple returns to $\Del_N$ until the first
multiple return to $\Gam_N$. It turned out that if
\begin{equation}\label{1.3}
\lim_{N\to\infty}\la^{-1}N(P\{\xi_0\in\Gam_N\})^\ell=\lim_{N\to\infty}\nu^{-1}N
(P\{\xi_0\in\Del_N\})^\ell=1
\end{equation}
then, again, $S_{\tau_N}$ converges in distribution to a geometric random
variable with the parameter $p=\la(\la+\nu)^{-1}$.

 We consider in this paper the following setup which comes from dynamical
 systems but has also a perfect probabilistic sense. Let $\zeta_k,\,
 k=0,1,2,...$ be a $\phi$-mixing discrete time process evolving on a finite
 or countable state space $\cA$. For each sequence $a=(a_0,a_1,a_2,...)\in
 \cA^\bbN$ of elements from $\cA$ and any $m\in\bbN$ denote by $a^{(m)}$
 the string $a_0,a_1,...,a_{m-1}$ which determines also an $m$-cylinder
 set $A^a_m$ in $\cA^\bbN$ consisting of sequences whose initial $m$-string
 coincides with $a_0,a_1,...,a_{m-1}$. Let $\tau^a_m$ be the first $l$ such
  that starting at the time $l$ the process $\zeta_k=\zeta_k(\om),\,
 k\geq 0$ repeats the string $a^{(m)}=(a_0,...,a_{m-1})$.
 Let $b=(b_0,b_1,...)\in\cA^\bbN,\, b\ne a$. We are interested
 in the number of $j<\tau^a_m$ such that process $\zeta_k$ repeats the string
  $b^{(n)}=(b_0,...,b_{n-1})$ starting at the time $j$. Employing the left shift
 transformation $T$ on the sequence space $\cA^\bbN$ we can represent the
 number in question as a random variable on $\Om=\cA^\bbN$ given by the sum
 \begin{equation}\label{1.4}
 \Sig_{n,m}^{b,a}(\om)=\sum_{j=0}^{\tau_m^a-1}\bbI_{A^b_n}(T^{j}\om).
 \end{equation}
 We will show that for any $T$-invariant $\phi$-mixing probability measure $P$
 on $\Om$ and $P$-almost all $a,b\in\Om$ the distribution of random variables
 $\Sig^{b,a}_{n,m}$ approaches in the total variation distance as $n\to\infty$
 the geometric distribution with the parameter
 \[
 P(A^a_m)\big ( P(A^a_m)+P(A^b_n)\big )^{-1}
 \]
 provided the ratio $P(A^b_n)/P(A^a_m)$ stays bounded away from zero and
 infinity. In particular, if this ratio tends to $\la$ when $m=m(n)$ and
 $n\to\infty$ then the distribution of $\Sig_{n,m}^{b,a}$ converges
 in total variation distance to the geometric distribution with the
 parameter $(1+\la)^{-1}$. In fact, we will prove such results for arbitrary
 unions of cylinders of the same length.
 It turns out that under just $\phi$-mixing (and not $\psi$-mixing) our method
 does not work for stationary processes as considered in \cite{KR1} and it
 is not applicable for shifts when the nonconventional sums are considered
 as in \cite{KR1}.

 After proving results for $\phi$-mixing shifts on symbolic spaces we consider
  dynamical systems which can be modeled by such shifts via corresponding partitions
  and approximating geometric balls by elements of such partitions we obtain under
  appropriate conditions geometric limit laws for numbers of returns
  to a sequence of shrinking
  balls until first arrival to a ball from another such sequence. We show also that our
   conditions
  are satisfied for Young towers with exponentially small tails obtaining thus limiting
  geometric law for dynamical systems which can be modelled by such towers.

 Our results are applicable to large  classes of dynamical systems.
  Among such systems are smooth expanding endomorphisms
 of compact manifolds and Axiom A (in particular, Anosov)
 diffeomorphisms which have symbolic representations via Markov partitions
 (see \cite{Bo}). Then, in place of cylinder sets we can count
 returns to an element of a Markov partition until first return
 to another element of this partition. If for such dynamical systems we
 consider Sinai-Ruelle-Bowen type measures then the results are extended
 to returns to geometric balls in place of elements of Markov partitions using
 approximations of the former by unions of the latter (cf.
 the proof of Theorem 3 in \cite{HP}). The results remain true for some
 systems having symbolic representations with infinite alphabet, for instance,
 for the Gauss map $Tx=\frac 1x$ (mod 1), \, $x\in(0,1],\, T0=0$ of the unit
 interval considered with the Gauss measure $G(\Gam)=\frac 1{\ln 2}\int_\Gam
 \frac {dx}{1+x}$ which is known to be $T$-invariant and $\psi$-mixing with
 an exponential speed (\cite{He}). More generally, our weaker $\phi$ mixing
 assumption enables us to consider additional classes of dynamical systems
 among them some non uniformly expanding transformations which can be
 modeled by some Young towers (see \cite {Yo1} and \cite {Yo2}) with
 polynomially fast decaying tails, as well as some Markov processes with
 infinite state space.

 The motivation for the present paper is two-fold. On one hand, it comes from
  the series of papers deriving Poisson type asymptotics for distributions of
  numbers of returns to appropriately shrinking sets
  (see, for instance, \cite{AV}, \cite{AS}, \cite{KR} and references there).
  On the other hand, our motivation was influenced by works on open dynamical
  systems which study dynamics of such systems until they exit the phase space
  through a "hole" (see, for instance, \cite{DWY}, \cite{HY18} and references there). In
  our setup the number of returns is studied until a "hazard" which
  is interpreted as certain visit to a set which can be also
  viewed as a "hole".

  The structure of this paper is as follows. In the next section we will
   describe precisely our setups and formulate main results. In Section
   \ref{sec3} we derive auxiliary lemmas and the corollary from the main
    theorem for the symbolic setup proving the latter in Section \ref{sec4}.
    In Sections \ref{sec5}, \ref{sec6} and \ref{sec7} we prove our results for geometric
    balls. In Section \ref{sec8} we show that the geometric law is preserved under suspension, proving our results for Young's towers.

 \section{Preliminaries and main results}\label{sec2}\setcounter{equation}{0}
\subsection{Symbolic setup}\label{sec2.1}

Our setup consists of a finite or countable set $\mathcal{A}$ which is not a singleton,
the sequence space $\Omega=\mathcal{A}^{\mathbb{N}}$,
the $\sigma$-algebra $\mathcal{F}$ on $\Omega$ generated by cylinder
sets, the left shift $T:\Omega\rightarrow\Omega$, and a $T$-invariant
probability measure $P$ on $(\Omega,\mathcal{F})$ which is assumed to be
$\phi$-mixing with respect to the $\sig$-algebras $\cF_{mn},\, n\geq m$ generated
by the cylinder sets $\{\om=(\om_0,\om_1,...)\in\Om:\,\om_i=a_i\,$ for
$m\leq i\leq n\}$ for some $a_m,a_{m+1},...,a_n\in\cA$. Observe also that
$\cF_{mn}=T^{-m}\cF_{0,n-m}$ for $n\geq m$.

Recall, that the $\phi$-dependence (mixing) coefficient between two $\sig$-algebras
$\cG$ and $\cH$ can be written in the form (see \cite{Br}),
\begin{eqnarray}\label{2.1}
&\phi(\cG,\cH)=\sup_{\Gam\in\cG,\Del\in\cH}\big\{\big\vert\frac {P(\Gam\cap\Del)}
{P(\Gam)}-P(\Del)\big\vert,\, P(\Gam)\ne 0\big\}\\
&=\frac 12\sup\{\| E(g|\cG)-E(g)\|_{L^\infty}:\, g\,\,\mbox{is}\,\,
\cH-\mbox{measurable and}\,\, E|g|_{L^\infty}\leq 1\}.\nonumber
\end{eqnarray}
Set also
\begin{equation}\label{2.2}
\phi(n)=\sup_{m\geq 0}\phi(\cF_{0,m},\cF_{m+n,\infty}).
\end{equation}
The probability $P$ is called $\phi$-mixing if $\phi(n)\to 0$ as $n\to\infty$.

We will need also the $\al$-dependence (mixing) coefficient between two $\sig$-algebras
$\cG$ and $\cH$ which can be written in the form (see \cite{Br}),
\begin{eqnarray*}
&\al(\cG,\cH)=\sup_{\Gam\in\cG,\Del\in\cH}\big\{\big\vert P(\Gam\cap\Del)
-P(\Gam)P(\Del)\big\vert\big\}\\
&=\frac 14\sup\{\| E(g|\cG)-E(g)\|_{L^1}:\, g\,\,\mbox{is}\,\,
\cH-\mbox{measurable and}\,\, E|g|_{L^\infty}\leq 1\}.\nonumber
\end{eqnarray*}
Set also
\begin{equation*}
\al(n)=\sup_{m\geq 0}\al(\cF_{0,m},\cF_{m+n,\infty}).
\end{equation*}

 For each word $a=(a_0,a_1,...,a_{n-1})\in
\cA^n$ we will use the notation $[a]=\{\om=(\om_0,\om_1,...):\, \om_i=a_i,\,
 i=0,1,...,n-1\}$ for the corresponding cylinder set. Without loss of generality
 we assume that the probability of each 1-cylinder set is positive, i.e. $P([a])>0$
 for every $a\in\cA$, and since $\cA$ is not a singleton we have also
 $\sup_{a\in\cA}P([a])<1$. Write $\Omega_{P}$ for the support
of $P$, i.e.
\[
\Om_P=\{\om\in\Om:\,P[\om_0,...,\om_n]>0\,\,\mbox{for all}\,\, n\geq 0\}.
\]
 For $n\ge 1$ set $\mathcal{C}_{n}=\{[w]\::\:w\in\mathcal{A}^{n}\}$. Then $\cF_{0,n}$
 consists of $\emptyset$ and all unions of disjoint elements from $\cC_{n+1}$.
 As usual, for $n,m\ge1$ we set $n\vee m=\max\{n,m\}$, $n\wedge m=\min\{n,m\}$.
 Next, for any $U\in\cF_{0,n-1},\, U\ne\emptyset$ and $V\in\cF_{0,m-1},\, V\ne\emptyset$
 define
\[
\pi(U)=\min\{1\le k\le n\::\:U\cap T^{-k}U\ne\emptyset\}
\]
and
\[
\pi(U,V)=\min\{0\le k\le n\wedge m \::\:U\cap T^{-k}V\ne\emptyset
\mbox{ or }V\cap T^{-k}U\ne\emptyset\}\:.
\]
Set
\[
\tau_V(\omega)=\min\{k\ge 1\::\:T^k\omega\in V\}
\]
with $\tau_V(\om)=\infty$ if the event in braces does not occur.
Next, define
\[
\Sig_{U,V}=\sum_{k=0}^{\tau_V-1}\bbI_{U}\circ T^k
\]
and write
\[
\kappa_{U,V}=\min\{\pi(U,V),\pi(U),\pi(V)\}\:.
\]

For any two random variables or random vectors $Y$ and $Z$ of the same
dimension denote by $\cL(Y)$ and $\cL(Z)$ their distribution and by
\[
d_{TV}(\cL(Y),\,\cL(Z))=\sup_G|\cL(Y)(G)-\cL(Z)(G)|
\]
the total variation distance between $\cL(Y)$ and $\cL(Z)$ where the supremum
is taken over all Borel sets. We denote also by Geo$(\rho),\,\rho\in(0,1)$
the geometric distribution with the parameter $\rho$, i.e.
\[
\mbox{Geo}(\rho)\{ k\}=\rho(1-\rho)^k\,\,\mbox{for each}\,\, k\in\bbN=\{ 0,1,
...\}.
\]

\begin{theorem}\label{thm2.1}
For all integers $n,m,M,R\ge1$ and the sets $U\in\cF_{0,n-1},\, U\ne\emptyset$ and
$V\in\cF_{0,m-1},\, V\ne\emptyset$ such that $U\cap V=\emptyset$ we have
\begin{eqnarray}\label{2.5}
&d_{TV}(\mathcal{L}\big(\Sig_{U,V}),Geo\big(\frac{P(V)}{P(V)+P(U)}\big)\big)\\
&\le 2\big((1-P(V))^{M+1}+P(U)+6MR(P(U)+P(V))^2+6M\phi(R-n\vee m)\nonumber\\
&+8MR(P(U)+P(V))\big(P(U_r)+P(V_r)+\phi(\ka_{U,V}+r-n\vee m)\big)\nonumber
\end{eqnarray}
where $U_r=T^rU,\, V_r=T^rV$ and $r$ is an arbitrary integer satisfying $0\leq r<n\wedge m$.
\end{theorem}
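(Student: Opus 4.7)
The plan is to compare $\cL(\Sig_{U,V})$ to $\text{Geo}(p)$ with $p=P(V)/(P(U)+P(V))$ by a truncate-and-decorrelate strategy. \textbf{Step 1 (truncation).} Since $P(U)+P(V)\le 1$ forces $1-p\le 1-P(V)$, the geometric tail $\text{Geo}(p)(\{k>M\})=(1-p)^{M+1}$ is at most $(1-P(V))^{M+1}$, which is the leading term on the right of \eqref{2.5}. This reduces the task to estimating $|P(\Sig_{U,V}=k)-p(1-p)^k|$ for each $0\le k\le M$ and summing the errors.

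\textbf{Step 2 (decomposition and close pairs).} I would decompose $\{\Sig_{U,V}=k\}$ according to the $k$-tuple of visit locations $0\le j_1<\cdots<j_k<\tau_V$ with $T^{j_i}\om\in U$, and split each summand by whether every consecutive gap among $j_1,\ldots,j_k,\tau_V$ exceeds $R$ (``well-separated'') or not (``close''). The close case is treated by a union bound: there are $O(M)$ consecutive pairs and $O(R)$ possible offsets, each contributing $O((P(U)+P(V))^2)$, yielding the $6MR(P(U)+P(V))^2$ term. Pairs too close for direct mixing are then handled by shifting both visit markers forward by $r<n\wedge m$ coordinates: the shifted events either force an overlap of $U_r$ or $V_r$ (contributing $P(U_r),P(V_r)$) or admit $\phi$-mixing at the enlarged gap $\kappa_{U,V}+r-n\vee m$, producing the $8MR(P(U)+P(V))(P(U_r)+P(V_r)+\phi(\kappa_{U,V}+r-n\vee m))$ term.

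\textbf{Step 3 (well-separated configurations).} On configurations with every gap at least $R$, I would apply \eqref{2.1} inductively to telescope the joint probability into a product of marginals. Each application costs $\phi(R-n\vee m)$, and at most $M$ applications are needed, producing the $6M\phi(R-n\vee m)$ term. After full telescoping, $P(\Sig_{U,V}=k)$ is approximated by a geometric sum of $P(U)^k(1-P(V))^{j_k-k}P(V)$ over $0\le j_1<\cdots<j_k$, which evaluates to $p(1-p)^k$ up to a boundary contribution of order $P(U)$ arising from $\Sig_{U,V}$ including the time-$0$ term; this is the isolated $P(U)$ in \eqref{2.5}. Collecting all error sources yields the claimed bound.

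\textbf{Main obstacle.} The principal technical difficulty is the interaction between the cylinder structure and the mixing argument. Since $U\in\cF_{0,n-1}$ and $V\in\cF_{0,m-1}$, $\phi$-mixing is effective only at gaps exceeding $n\vee m$, which is why every $\phi$ in \eqref{2.5} carries a $-n\vee m$ correction. More seriously, two near-coincident visits cannot be decorrelated by mixing at all; they must be handled through the combinatorial non-overlap quantity $\kappa_{U,V}$ together with the shifted probabilities $P(U_r),P(V_r)$. The free parameters $M,R,r$ will later be optimized against $P(U),P(V)$ to make the overall bound small, and, as noted in the introduction, this delicate balance is also why the argument does not extend to $\phi$-mixing nonconventional sums.
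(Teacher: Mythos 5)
Your intuitive picture is roughly aligned with what the bound is doing, but the actual mechanism of the paper's proof is structurally different, and your Step~3 contains a genuine gap. The paper does not decompose $\{\Sig_{U,V}=k\}$ by visit configurations and telescope. Instead it introduces the indicator vectors $\bfX_M=(X_{k,\al})_{0\le k\le M,\,\al=0,1}$ with $X_{k,0}=\bbI_V\circ T^k$, $X_{k,1}=\bbI_U\circ T^k$, and an i.i.d.\ Bernoulli copy $\bfY_M$, then uses the observation that $S_{\tau_M}$ and $S^*_{\tau^*_M}$ are the \emph{same measurable function} of $\bfX_M$, respectively $\bfY_M$, so that $d_{TV}(\cL(S_{\tau_M}),\cL(S^*_{\tau^*_M}))\le d_{TV}(\cL(\bfX_M),\cL(\bfY_M))$. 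The latter is then bounded by Theorem~3 of Arratia--Goldstein--Gordon \cite{AGG} (the Chen--Stein method), which directly yields the $b_1,b_2,b_3$ terms; $b_3$ in turn requires Lemma~\ref{lem3.3} (an $\al$-mixing bound for a sandwiched $\sig$-algebra) and has no counterpart in your sketch. Your proposed hand-telescoping over well-separated configurations is precisely what the AGG theorem is replacing: the events $\{\Sig_{U,V}=k\}$ involve non-visits on all intermediate times, so there are exponentially many configurations and one-sided $\phi$-mixing does not give a clean product structure whose errors sum to $O(M\phi(R-n\vee m))$. Making that telescoping rigorous would amount to re-deriving the Chen--Stein bound from scratch.

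A second concrete error is your account of the isolated $P(U)$ term. It does not arise as a boundary contribution from a time-$0$ term. For independent Bernoullis $Y_{k,0},Y_{k,1}$ (which, unlike $X_{k,0},X_{k,1}$, are allowed to be simultaneously $1$), Lemma~\ref{lem3.4} shows that $S^*_{\tau^*}$ is geometric with parameter $\varrho=\frac{p_V}{p_V+p_U(1-p_V)}$, not $\rho=\frac{p_V}{p_V+p_U}$. The term $2P(U)$ is exactly $d_{TV}(\mbox{Geo}(\varrho),\mbox{Geo}(\rho))=2p_U$, estimated in the paper as $A_4$. Without passing through the independent model and noticing this parameter mismatch, that contribution does not appear naturally in your decomposition.
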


Next, for each $\xi\in\Om$ and $n\geq 1$ write $A^\xi_n=[\xi_0...\xi_{n-1}]\in\cC_n$.
For any $\xi,\eta\in\Om$ set $\tau_n^\eta(\om)=\tau_{A^\eta_n}(\om)$ and
$\Sig_{n,m}^{\xi,\eta}=\Sig_{A_n^\om,A_m^\eta}$.
 The following corollary deals with the limit behaviour of $\Sig^{\xi,\eta}_{n,m(n)}$
 for $P\times P$-typical pairs $(\xi,\eta)\in\Omega\times\Omega$, where $|m(n)-n|=o(n)$ and,
  as usual, $o(n)$ denotes an unspecified function $f:\mathbb{N}\rightarrow\mathbb{N}$ with
$\frac {f(n)}{n}\rightarrow0$ as $n\rightarrow\infty$.
\begin{corollary}\label{cor2.2}
Let $\{m(n)\}_{n\ge1}\subset\mathbb{N}\setminus\{0\}$ be a sequence satisfying $|m(n)-n|=o(n)$
 as $n\rightarrow\infty$. Assume that there exists $\beta>0$
 such that $\phi(n)\leq\be^{-1}e^{-\be n}$ for all $n\geq 1$.
 Impose also the finite entropy condition $-\sum_{a\in\cA}P([a])\ln P([a])<\infty$.
  Then for $P\times P$-a.e. $(\xi,\eta)\in\Omega\times\Omega$,
\begin{equation}\label{2.6}
\underset{n\to\infty}{\lim}\:d_{TV}(\mathcal{L}(\Sig^{\xi,\eta}_{n,m(n)}),
Geo(\frac{P(A_{m(n)}^{\eta})}{P(A_{m(n)}^{\eta})+P(A_{n}^{\xi})}))=0\:.
\end{equation}
In particular, if
\begin{equation}\label{2.7}
\lim_{n\to\infty}\frac {P(A_n^\xi)}{P(A^\eta_{m(n)})}=\la
\end{equation}
then $\cL(\Sig_{n,m(n)}^{\xi,\eta})$ converges in total variation as
$n\to\infty$ to the geometric distribution with the parameter
$(1+\la)^{-1}$.
\end{corollary}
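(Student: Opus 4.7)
\emph{Plan.} The strategy is to apply Theorem~\ref{thm2.1} with $U=A^\xi_n$ and $V=A^\eta_{m(n)}$, and to choose the auxiliary integers $M=M(n)$, $R=R(n)$, $r=r(n)$ (depending also on $\xi,\eta$) so that each of the five summands in \eqref{2.5} tends to zero for $P\times P$-a.e.\ $(\xi,\eta)$. For $P\times P$-a.e.\ pair one has $\xi\ne\eta$, so the cylinders $A^\xi_n$ and $A^\eta_{m(n)}$ are disjoint for all $n$ large enough and Theorem~\ref{thm2.1} applies. Once \eqref{2.6} is proved, the second assertion follows immediately: the geometric distribution $\mbox{Geo}(\rho)$ depends continuously on $\rho$ in the total variation metric, so \eqref{2.7} yields the additional convergence to $\mbox{Geo}((1+\lambda)^{-1})$.

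Two almost-sure inputs drive the argument. The first is the Shannon--McMillan--Breiman theorem, which applies because the finite entropy hypothesis is imposed and $\phi$-mixing gives ergodicity: it yields $-n^{-1}\log P(A^\xi_n)\to h$ for $P$-a.e.\ $\xi$, where $h$ is the entropy of $T$. Assuming $h>0$ (the degenerate case $h=0$ is handled separately, since then the cylinder probabilities need not decay at all), one obtains $P(A^\xi_n),\,P(A^\eta_{m(n)})\le e^{-n(h-\epsilon)}$ eventually, and by $T$-invariance of the SMB set also $P(T^r A^\xi_n)\le e^{-(n-r)(h-\epsilon)}$ whenever $r=o(n)$. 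The second input is a short-return lower bound: for $P$-a.e.\ $\xi$, $P\times P$-a.e.\ $(\xi,\eta)$, and every $\delta>0$ one has $\pi(A^\xi_n),\,\pi(A^\eta_{m(n)}),\,\pi(A^\xi_n,A^\eta_{m(n)})\ge(1-\delta)n$ eventually. This is derived from the $\phi$-mixing estimate $P(A\cap T^{-k}A)\le P(A)^2+P(A)\phi(k-n)$ for cylinders $A\in\cC_n$ and $k>n$, together with an analogous bound for pairs of cylinders, combined with a first Borel--Cantelli argument summed over all relevant cylinders.

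With these ingredients in hand one sets, for a small fixed $\gamma\in(0,1)$, $r=\lfloor\gamma n\rfloor$, $R=n\vee m(n)+\lceil(h/\beta+1)n\rceil$, and $M=\lceil P(V)^{-1}\log(1/P(V))\rceil$. Then $(1-P(V))^{M+1}\le P(V)\to 0$; $P(U)\to 0$ by SMB; the summand $6MR(P(U)+P(V))^2$ is controlled via $|m(n)-n|=o(n)$ and SMB, the key observation being that both $P(U)^2/P(V)$ and $P(V)^2/P(U)$ decay exponentially under this growth condition; $6M\phi(R-n\vee m)$ decays like $e^{-\beta n}$ by the choice of $R$, since $M\asymp e^{nh+o(n)}$; and in the final summand the prefactor $MR(P(U)+P(V))\asymp n(1+P(U)/P(V))$ is only sub-exponential by SMB, while each of $P(U_r),\,P(V_r),\,\phi(\ka_{U,V}+r-n\vee m)$ is exponentially small in $n$ once $\delta<\gamma$ in the matching-time bound. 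The principal obstacle is the almost-sure lower bound on the joint matching time $\pi(A^\xi_n,A^\eta_{m(n)})$ on the product space, where one must control events of the form $\{A^\xi_n\cap T^{-k}A^\eta_{m(n)}\ne\emptyset\}$ simultaneously in $k$ and in the pair $(\xi,\eta)$; this is handled by a Fubini argument on $(\Omega\times\Omega,\,P\times P)$ that sums the $\phi$-mixing estimate over $k$ and over all relevant pairs of cylinders before applying Borel--Cantelli.
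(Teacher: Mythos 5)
Your overall strategy coincides with the paper's: apply Theorem~\ref{thm2.1} with $U=A^\xi_n$, $V=A^\eta_{m(n)}$, invoke Shannon--McMillan--Breiman for the almost-sure decay of cylinder measures, establish an almost-sure lower bound on the matching times $\kappa_{U,V}$, and then tune $M,R,r$ so that all five summands in~\eqref{2.5} vanish. Your parameter choices ($r\asymp\gamma n$, $R$ linear in $n$, $M\asymp P(V)^{-1}\log(1/P(V))$) are consistent and would close the argument. However, there is a genuine gap in the way you claim to obtain the matching-time lower bound, and two smaller points that a single missing ingredient --- Lemma~\ref{lem3.1} of the paper, which gives the \emph{uniform} bound $P(A)\le e^{-\upsilon n}$ for every $A\in\cC_n$ --- would repair all at once.

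The main gap is in the second ``input.'' You assert that $\pi(A^\xi_n)$, $\pi(A^\eta_{m(n)})$, $\pi(A^\xi_n,A^\eta_{m(n)})\ge(1-\delta)n$ eventually, derived from ``the $\phi$-mixing estimate $P(A\cap T^{-k}A)\le P(A)^2+P(A)\phi(k-n)$ for $A\in\cC_n$ and $k>n$.'' But the quantity you need to control is precisely the measure of the set of $\xi$ (resp.\ $(\xi,\eta)$) admitting a match at some lag $k<(1-\delta)n$, and in that range the two cylinders \emph{overlap}, so there is no gap to feed into $\phi$, and the estimate you cite is vacuous. The paper's Lemma~\ref{lem3.2} works quite differently: it fixes $\omega$ and observes that $\{\eta:\,T^{-r}A^\omega_n\cap A^\eta_{m(n)}\ne\emptyset\}$ is a \emph{cylinder set in $\eta$} (a purely combinatorial fact about agreement of symbols), then bounds its measure by the uniform decay $P(A)\le e^{-\upsilon n}$ of Lemma~\ref{lem3.1}, sums over $r\le b(n)-c\ln b(n)$, applies Borel--Cantelli in $\eta$, and finishes by Fubini; the self-match terms are handled ``similarly.'' Your Fubini/Borel--Cantelli skeleton is right, but the engine driving the estimate is Lemma~\ref{lem3.1}, not a gap-type $\phi$-mixing inequality; as written the derivation does not go through.

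Two further points, both cured by the same Lemma~\ref{lem3.1}. First, you estimate $P(T^rA^\xi_n)\le e^{-(n-r)(h-\epsilon)}$ by ``$T$-invariance of the SMB set.'' This works for fixed $r$, but when $r=\lfloor\gamma n\rfloor$ grows linearly with $n$ the SMB rate constant $K(T^r\xi,\epsilon)$ may deteriorate as $r\to\infty$, so the bound is not uniform in $r$ without a further Egorov-type argument, and even then one would only control most, not all, of the relevant $r$'s. The correct bound is immediate and uniform from Lemma~\ref{lem3.1}: $P(T^rA^\xi_n)=P(A^{T^r\xi}_{n-r})\le e^{-\upsilon(n-r)}$. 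Second, the ``degenerate case $h=0$'' cannot occur: Lemma~\ref{lem3.1} forces $P(A^\xi_n)\le e^{-\upsilon n}$ with $\upsilon>0$, whence $h\ge\upsilon>0$ always under $\phi$-mixing and $|\cA|\ge2$, so no separate treatment is needed.
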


We observe that, in general (in fact, "usually"), the ratio
$\frac {P(A_n^\xi)}{P(A^\eta_{n})}$ will be unbounded for distinct
$\xi,\eta\in\Om$, and so in order to obtain nontrivial limiting
geometric distribution it is necessary to choose cylinders $A_n^\xi$
and $A_{m(n)}^\eta$ with appropriate relative lengths. In order to have the ratio
$\frac {P(A_n^\xi)}{P(A^\eta_{m(n)})}$
 bounded away from zero and infinity our condition $|m(n)-n|=o(n)$ is,
 essentially, necessary (at least, in the finite entropy case)
 which follows from the Shannon-McMillan-Breiman theorem (see \cite{Pe}).
 Corollary \ref{cor2.2} can be applied to Markov shifts with infinite state space
 satisfying the Doeblin condition which are known to be $\phi$-mixing with an
 exponential speed (see \cite{Br}).

 \begin{remark}\label{rem2.2+}
 By a slight modification of the proof it is possible to show that Corollary \ref{cor2.2}
 holds true for any nonperiodic $\xi$ and $\eta$ which are not shifts of each other.
  \end{remark}

\begin{remark}\label{rem2.3}
Theorem \ref{thm2.1} and Corollary \ref{cor2.2} remain true also for the two-sided shift setup. In
this case $\Om=\cA^\bbZ$, i.e. $\Om=\{\om=(...,\om_{-1},\om_0,\om_1,...):\,\om_i\in\cA\}$, and the
$\sig$-algebras $\cF_{mn}$ are defined for $-\infty<m\leq n<\infty$ as unions of cylinder sets
$\{\om=(...,\om_{-1},\om_0,\om_1,...):\,\om_i=a_i$ for $m\leq i\leq n\}$. The $\phi$ and $\al$
dependence coefficients are defined by the same formulas as above with $\phi(n)=\sup_{-\infty<m<\infty}
\phi(\cF_{-\infty,m},\cF_{m+n,\infty})$ and $\al(n)=\sup_{-\infty<m<\infty}\al(\cF_{-\infty,m},
\cF_{m+n,\infty})$. Next, for $n,m\geq 1$ we consider nonempty sets $U\in\cF_{-n+1,n-1}$ and $V\in\cF_{-m+1,m-1}$
and define $\pi(U),\,\pi(U,V),\,\tau_V,\,\ka_{UV}$ and $\Sig_{U,V}$ in the same way as above. The quantities
we will have to estimate in the proof of Theorem \ref{thm2.1} and Corollary \ref{cor2.2} have the form
$P(U\cap T^{-k}V)=P(T^{-l}U\cap T^{-l-k}V)$ where $U\in\cF_{-n+1,n-1}$ and $V\in\cF_{-m+1,m-1}$. Choosing
$l\geq\max(m-1,n-1)$ we obtain that $T^{-l}U\in\cF_{l-n+1,l+n-1}$ and $T^{-l-k}V\in\cF_{l+k-m+1,l+k+m-1}$
with $l-n+1\geq 0$ and $l-m+1\geq 0$ which amounts to the same estimates as in the one-sided shift case.
\end{remark}
\subsection{Maps with $\phi$-mixing partitions}\label{sec2.2}

Let $T$ be a measurable map of a compact metric space $\bfM$ and $\mu$ be a $T$-invariant probability
measure on $\bfM$. Our setup includes also a countable (one-sided) measurable generating partition $\cA$
of $\bfM$ with finite entropy and denote
by $\cA^n=\bigvee_{j=0}^{n-1}T^{-j}\cA$ its $n$th join. Recall, that "generating" means that if $A_n(x)$
 is an element of the partition $\cA^n$ which contains a point $x$ then $\cap_nA_n(x)=\{ x\}$. Let
  $A^j,\, j=1,2,...$ be a
numeration of elements of $\cA$ then each $x\in M$ has a symbolic representation $\om(x)=(\om_0,\om_1,...)$
so that $\om_k=j$ if $T^kx\in A^j$. Since $\cA$ is generating then no two different points have the same symbolic
representation. Hence, map $\om:\, \bfM\to\cA^\bbN$ is a bijection and it sends the measure $\mu$ to a probability
measure $\om(\mu)$ on $\cA^\bbN$ invariant under the left shift on $\cA^{\bbN}$ which provides a symbolic
representation of the dynamical system $(\bfM,T,\mu)$. Let $\cF_{0,n-1}$ be the $\sig$-algebra generated by all elements of the partition $\cA^n$. Clearly, $\cF_{0,n-1}$ consists of $\emptyset$ and all unions of elements
of $\cA^n$. We denote also by $\cF$ the minimal $\sig$-algebra which contains all $\cF_{0,n-1},\, n\geq 1$.
Recall. that the measure $\mu$ is called (left) $\phi$-mixing if
\[
|\mu(\Gam\cap T^{-n-k}\Del)-\mu(\Gam)\mu(\Del)|\leq\phi(k)\mu(\Gam)
\]
for any $\Gam\in\cF_{0,n-1}$ and $\Del\in\cF$ where $\phi(k)$ is nonincreasing and $\phi(k)\to 0$
as $k\to\infty$. This definition corresponds to the one given above in the symbolic setup if we introduce
$\sig$-algebras $\cF_{mn}=T^{-m}\cF_{0,n-m+1},\, n\geq m$. We will assume the following properties. Denote
by $B_r(x)$ an open ball of radius $r$ centered at $x$ though in view of Assumption A4 below our results remain the
same whether we consider open or closed balls.

\begin{property}[The diameter of $\cA^n$.]\label{diameter}
There is a generating partition $\cA$ and constants $C,p>0$, such that for every $n\ge 0$, the diameter of its $n$th join under the map $T$, $\cA^n$, satisfies $\diam\cA^n \le Cn^{-p}$.
\end{property}

When the diameter of $\cA^n$ decays super-polynomially, we say that A\ref{diameter} holds with $p=+\infty$

\begin{property}[Polynomial rate of $\phi$-mixing.]\label{mixing}
The measure $\mu$ is left $\phi$-mixing with respect to the partition $\cA$, with $\phi(n) \le C n^{-\beta}$ for some $\beta>1$.
\end{property}

\begin{property}[Dimension for $\mu$.]\label{dimension}
There is $d>0$, such that for almost every $x\in\supp(\mu)$,  we have
$$
\lim_{r\to 0}\frac{\log \mu(B_r(x))}{\log r} = d.
$$

It is shown in~\cite{P97} that when this assumption holds, then the Hausdorff dimension dim$_H\mu$
of $\mu$ equals $d$.
\end{property}

\begin{property}[Regularity of $\mu$.]\label{regularity}
There are $C,a>0$ and $b\in \mathbb{R}$, such that for $\delta \ll r$ and almost every $x$,
$$
\mu(B_{r+\delta}(x)\setminus B_{r-\delta}(x))\le C\, \delta^a r^{-b}\, \mu(B_r(x)).
$$
Note that for the Lebesgue measure on $\mathbb{R}^n$, this property is satisfied with $a=b=1$.
\end{property}

For any $x,y,z \in\bfM$, write
$$
\Sxy(z) = \sum_{j=0}^{\tau_{B_r(y)-1}} \mathbb{I}_{B_r(x)}(T^jz),
$$
which counts the number of arrivals to $B_r(x)$ before hitting $B_r(y)$ for the first time.
With these we can state the theorem.

To simplify notation, we will write $\rho(x,y,r) = \frac{\mu(B_r(y))}{\mu(B_r(x))+\mu(B_r(y))}$.
\begin{theorem}\label{t.phimixing}
Assume that Assumptions A1--A4 are satisfied with
\begin{equation}\label{c.parameters}
p>\frac{d+b}{ad}.
\end{equation}
Then for $\mu \times \mu$ almost every $(x,y)\in\bfM\times\bfM$, such that $\rho(x,y,r)\to \rho(x,y)\in(0,1)$ as
$r\to 0$,
$$
\lim_{r\to 0}d_{TV}(\cL(\Sxy),Geo(\rho(x,y))) = 0.
$$
If the diameter of $\cA^n$ decreases super-polynomially then the assumption (\ref{c.parameters}) is
redundant.
\end{theorem}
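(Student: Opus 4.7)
The plan is to reduce Theorem \ref{t.phimixing} to the symbolic result of Theorem \ref{thm2.1} via a sandwich argument: approximate $B_r(x)$ and $B_r(y)$ from inside and outside by unions of cylinders from $\cA^n$ for a suitable length $n=n(r)$, squeeze $\Sxy$ between the corresponding symbolic counts, apply Theorem \ref{thm2.1} to each side (relabeling its internal shift parameter to avoid clashing with our $r$), and balance all parameters so every error term vanishes as $r\to 0$. The constraint (\ref{c.parameters}) is precisely what makes the required parameter window non-empty.

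Fix a $\mu\times\mu$-typical pair $(x,y)$ for which A3 holds at both $x$ and $y$, the convergence $\rho(x,y,r)\to\rho(x,y)\in(0,1)$ is in effect, and the periodic-return quantity $\ka_{U,V}$ for the approximating sets introduced below is not atypically small. Given $r$, invoke A1 to pick $n=n(r)$ with $\delta:=Cn^{-p}\ge\diam\cA^n$ and set
\[
U^-=\bigcup\{A\in\cA^n:A\subset B_r(x)\},\qquad U^+=\bigcup\{A\in\cA^n:A\cap B_r(x)\ne\emptyset\},
\]
together with the analogous $V^\pm$ at $y$. Then $U^-\subset B_r(x)\subset U^+$ and $U^+\setminus U^-\subset B_{r+\delta}(x)\setminus B_{r-\delta}(x)$, so A4 and A3 give $\mu(U^+\setminus U^-)\lesssim\delta^a r^{d-b}$ and $\mu(V^\pm)\asymp r^d$. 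The deterministic sandwich $\Sig_{U^-,V^+}\le\Sxy\le\Sig_{U^+,V^-}$ becomes an equality with probability tending to $1$, since the expected number of visits to $U^+\setminus U^-$ before the hazard (of order $\mu(B_r(y))^{-1}\asymp r^{-d}$) is bounded by $\delta^a r^{-b}$, and the discrepancy between the $V^+$ and $V^-$ hazards is handled identically.

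Now apply Theorem \ref{thm2.1} to the pairs $(U^+,V^-)$ and $(U^-,V^+)$ with cylinder length $n$ and internal shift taken equal to $0$, so that the last summand of (\ref{2.5}) reduces to $8MR(P(U)+P(V))^2+8MR(P(U)+P(V))\phi(\ka_{U,V}-n)$, the latter piece being negligible by typicality of $(x,y)$ together with A2. Using $P(U^\pm),P(V^\pm)\asymp r^d$ from A3 and $\phi(k)\lesssim k^{-\beta}$ from A2, choose $M\asymp r^{-d}\log(1/r)$ so that $(1-P(V))^{M+1}$ is polynomially small, and then $R$ between $r^{-d/\beta}$ and a small power of $r^{-d}$ so that both $MR(P(U)+P(V))^2\to 0$ and $M\phi(R-n)\to 0$, both requiring $n\ll R$. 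The required window
\[
r^{-1/p}\vee r^{-b/(ap)}\;\ll\;n\;\ll\;r^{-d}
\]
is non-empty precisely when $p>(d+b)/(ad)$, and the lower bound is vacuous when $p=+\infty$. Finally the geometric parameter $P(V^\mp)/(P(U^\pm)+P(V^\mp))$ differs from $\rho(x,y,r)$ by $O(\delta^a r^{-b})=o(1)$, so the triangle inequality in total variation together with $\rho(x,y,r)\to\rho(x,y)$ completes the argument.

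The main obstacle lies in the joint balancing in the third step: $n$ must be large enough that (i) the cylinder approximation of the balls is fine on the scale dictated by A4 and (ii) the unions $U^\pm$ faithfully replace $B_r(x)$ in the symbolic setting, yet small enough that the $\phi$-mixing window $R-n$ remains a substantial fraction of $R$, so that the polynomial rate $\beta>1$ suffices to decouple successive iterates faster than the hazard time $r^{-d}$. Making these compatible is exactly the content of the hypothesis $p>(d+b)/(ad)$; verifying it is routine once the scaling of each term in (\ref{2.5}) is tracked carefully in powers of $r$, with the typicality of $(x,y)$ used to dispose of the term involving $\ka_{U,V}$.
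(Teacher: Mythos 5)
Your proposal breaks at the step where you apply Theorem \ref{thm2.1} directly to $(U^+,V^-)$ and $(U^-,V^+)$ and declare the term $8MR(P(U)+P(V))\,\phi(\kappa_{U,V}-n)$ to be ``negligible by typicality of $(x,y)$ together with A2.'' This does not hold. As the paper observes at the start of Section \ref{sec6}, $\kappa_{B_r(x),B_r(y)}$ is only of order $|\log r|$, whereas the cylinder length $n(r)\sim r^{-w/p}$ is a positive power of $1/r$; the same logarithmic order is inherited by the approximating cylinder unions $U^\pm,V^\pm$. Consequently $\kappa_{U,V}-n$ is deeply negative, $\phi(\kappa_{U,V}-n)$ is bounded below by a positive constant, and the offending term scales as $MR(P(U)+P(V))\asymp r^{-d}\log(1/r)\cdot R\cdot r^{d}\to\infty$ for any choice of $R$ that also controls $M\phi(R-n)$. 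Typicality of $(x,y)$ is of no help: in the symbolic Corollary \ref{cor2.2}, Lemma \ref{lem3.2} delivers $\kappa^{\omega,\eta}_{n,m(n)}\ge b(n)-c\ln b(n)$, so $\kappa$ is comparable to the cylinder length; for balls, the recurrence scale and the approximation scale $n(r)$ are separated by an unbridgeable gap for every pair, typical or not.

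This is exactly where the paper has to depart from the cylinder argument. Remark \ref{6.4} reopens the Chen--Stein estimate of \cite{AGG} and replaces the coarse $b_2$ by the sharper $b'_2$ of (\ref{b'2}) --- the probability of at least one short return rather than the expected number of short returns. Proposition \ref{6.5} (with Proposition \ref{6.6}) then shows $b'_2\le M\,\up(\mu(B_r(x)))$ with $\up(u)=o(u)$, by combining the recurrence-rate Theorem \ref{6.1} and Proposition \ref{6.3} with a Lebesgue density-point argument: one cannot make the \emph{existence} of a return to $B_r(x)$ within time $r^{-\sigma}$ rare, but one can make the \emph{measure} of points in $B_r(x)$ returning that quickly a vanishing fraction of $\mu(B_r(x))$. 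Your proposal has no substitute for this step, so the argument does not close. The rest of the parameter balancing is in the right spirit, though the condition $p>(d+b)/(ad)$ actually enters through the window $(b+d)/a<w<dp$ combined with $\sigma$ just below $d$ satisfying $\sigma>w/p$ and $\sigma\beta>d$, which is strictly tighter than the lower bound $\max(1,b/a)<w$ implied by your window $r^{-1/p}\vee r^{-b/(ap)}\ll n$.
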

\begin{remark}
When the measure $\mu$ is absolutely continuous with respect to the volume with a density $h$ that is bounded from above, we can take $a=b=1$ in A\ref{regularity}. In this case, condition~(\ref{c.parameters}) reduces to $p>\frac{d+1}{d}$.
\end{remark}

\begin{remark}[Radius of the ball]
In Theorem~\ref{t.phimixing} we take the ball at $x$ and $y$ with the same radius. However, one can easily check that the same results hold as long as $\frac{\mu(B_{r_y}(y))}{\mu(B_{r_x}(x))+\mu(B_{r_y}(y))}$ converges to a limit $\rho\in(0,1)$ when $r_x$ and $r_y$ tend to $0$. For example, when the measure $\mu$ is absolutely continuous with respect to the Lebesgue measure, one can take the balls to be  $B_{r}(x)$ and $B_{cr}(y)$) for any constant $c>0$.
\end{remark}

\begin{remark}[Invertible case]
Theorem~\ref{t.phimixing} remains true when $T$ is invertible. In this case, assumption A\ref{diameter} should be stated for the two-sided join
$\cA^n = \bigvee_{j-(n-1)}^{n-1}T^{-j}\cA$. The rest of the proof remains the same, with minor modification described in Remark~\ref{rem2.3}. Also note that Theorem~\ref{6.1} and Propositions~\ref{6.5}, \ref{6.6} hold
true for both invertible and non-invertible systems.
\end{remark}

\subsection{Geometric law under suspension}

Next, we will state a general result which will allow us to generalize the previous theorems to discrete time suspensions over  $\phi$-mixing systems.
 Namely, let $(\tom, \tmu, \tT)$ be a measure preserving dynamical system with $\tmu$ being a probability measure. Given a measurable function $R: \tom \to \bbZ^+$ consider the space $\Om = \tom\times \bbZ^+ /\sim$ with the equivalence relation $\sim$ given by
$$
(x,R(x))\sim (\tT(x), 0).
$$
Define {\em the (discrete-time) suspension map over $\tom$ with roof function $R$} as the measurable map $T$ on the space $\Om$ acting by
$$\
T(x,j)=\left\{\begin{array}{ll}(x,j+1) &\mbox{if } j<R(x)-1,\\
	(\hat{T}x,0)&\mbox{if } j=R(x)-1.\end{array}\right.
$$
We will call $\Om$ a {\em tower over $\tom$} and refer to the set $\Omega_k:=\{(x,k): x\in\tom, k<R(x)\}$ as the {\em $k$th floor} where
 $\tom$ can be naturally identified with the $0$th floor called {\em the base of the tower}.

For $0\le k < i$, set $\Om_{k,i} = \{(x,k): R(x) = i\}$. The map
$$
\Pi: (x,k)\mapsto x
$$
is naturally viewed as a projection from the tower $\Om$ to the base $\tom$ and for any given set $U\subset\Om$ we will write
$$
\tU = \Pi(U).
$$

The measure $\tmu$ can be lifted to a measure $\hat\mu$ on $\Om$ by
$$
\hat\mu(A) = \sum_{i=1}^{\infty}\sum_{k=0}^{i-1} \tmu(\Pi(A\cap \Om_{k,i})) = (\tmu\times \cN)(A),
$$
where $\cN$ is the counting measure on $\bbN$. It is easy to verify that $\hat\mu$ is $T$-invariant and if $\tmu(R) = \int R\, d\tmu<\infty$ then $\hat\mu$ is a finite measure. In this case, the measure
$$
\mu = \frac{\hat\mu}{\tmu(R)} = \frac{\tmu\times \cN}{\tmu(R)}
$$
is a $T$-invariant probability measure on $\Om$.
In order to state our main theorem for this section, we will introduce the following class of sets.

\begin{definition}\label{d.1}
We say that a positive measure set $U\subset \Om$ is {\em well-placed} (WP) if for $\tmu$ almost every $x$ the intersection $U\cap\{(x,k):k<R(x)\}$
contains at most one point.
\end{definition}

\begin{remark}\label{r.WP}
	If $U$ is well-placed, then, clearly, $\Pi\mid_U$ is almost injective, i.e. there exists a set $U_0\subset U$ with $\mu(U\setminus U_0)=0$, such that $\Pi\mid_{U_0}$ is injective. In particular, if $U\subset \Om_0$ then it is well-placed. Also note that if $U$ is well-placed, then so is every positive measure subset of $U$.
\end{remark}

As before, denote by $\tau_U$ the first hitting time of $U\subset\Om$, i.e. $\tau_U(x)=\min\{ l\geq 1:\, T^lx\in U\}$ if the event in braces occurs
and $\tau_U(x)=\infty$, if not, and for $U,V\subset\Om$ we also write
$$
\Sig_{U,V}=\sum_{k=0}^{\tau_V-1}\bbI_{U}\circ T^k.
$$
Similarly, given $\tilde{U},\tilde V\subset\tom$, we denote by $\ttau_{\tilde{U}}$ the first hitting time of $\tilde{U}$ under iterates of the map
$\tT$ and set
$$
\tilde\Sig_{\tU,\tV}=\sum_{k=0}^{\ttau_V-1}\bbI_{\tilde U}\circ \tT^k.
$$
In other words, every term that contains tilde is defined for the base systems $(\tom, \tmu, \tT)$, and every term without tilde is defined for the suspension $(\Om,\mu,T)$.

\begin{theorem}\label{t.suspension}
	Let $(\Om,\mu,T)$ be a discrete-time suspension over an ergodic system $(\tom, \tmu, \tT)$ with a roof function $R$ satisfying $\int R\,d\tmu<\infty$. 
Let $\{U_n\}$, $\{V_n\}$ be two sequences of well-placed subsets of $\Om$. We consider the following two cases:
	
	(i) If the base system $(\tom, \tmu,\tT)$ and the sets $\tU_n = A^\om_n, \tV_n = A^\eta_{m(n)}$ fall within the symbolic setup of Section \ref{sec2.1}
 and satisfy the assumptions of Corollary~\ref{cor2.2}, then $\Sig_{U_n,V_n}$ converges in distribution on $(\Om,\mu)$ to the geometric distribution $Geo((1+\la)^{-1})$ with $\la$ given by~\eqref{2.7};
	
	(ii) If the base system $(\tom, \tmu,\tT)$ and the sets  $\tU_n = B_{r_n}(x), \tV_n = B_{r_n}(y)$ fall within the setup of Section \ref{sec2.2} and
satisfy the assumptions of Theorem~\ref{t.phimixing} for some sequence of positive real numbers $\{r_n\}$ with $r_n\to 0$, then $\Sig_{U_n,V_n}$ converges 
in distribution on $(\Om,\mu)$ to the geometric distribution $Geo(\rho(x,y))$.
\end{theorem}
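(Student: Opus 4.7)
The plan is to reduce the distribution of $\Sig_{U_n,V_n}$ on the tower $(\Om,\mu,T)$ to that of $\tilde\Sig_{\tU_n,\tV_n}$ on the base $(\tom,\tmu,\tT)$, whose convergence to $Geo(\rho)$ is supplied by Corollary~\ref{cor2.2} in case (i) and by Theorem~\ref{t.phimixing} in case (ii).

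First I use well-placedness to establish an orbit-tracing identity. For $\tmu$-a.e.\ $x\in\tU_n$ there is a unique level $\rho_{U_n}(x)<R(x)$ with $(x,\rho_{U_n}(x))\in U_n$, and analogously $\rho_{V_n}$ on $\tV_n$. Tracing the tower orbit starting at $(x,0)\in\Om_0$ shows that a $U_n$-visit occurs during the traversal of the fiber over $\tT^k x$ iff $\tT^k x\in\tU_n$, and that the first $V_n$-hit occurs in the fiber over the first $\tV_n$-hit of the base orbit. Using $\tU_n\cap\tV_n=\emptyset$ (which holds eventually in both cases), this gives $\Sig_{U_n,V_n}(x,0)=\tilde\Sig_{\tU_n,\tV_n}(x)$ for $x\notin\tV_n$. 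To extend to arbitrary $(x,j)\in\Om$, call $(x,j)$ \emph{good} if $\{(x,\ell):j\le\ell<R(x)\}$ meets neither $U_n$ nor $V_n$ and $\tT x\notin\tV_n$; on the good set $G_n$ the same argument yields $\Sig_{U_n,V_n}(x,j)=\tilde\Sig_{\tU_n,\tV_n}(\tT x)$. Counting pairs $(x,j)$ directly from the definition of $\hat\mu$ gives
\[
\hat\mu(G_n^c)\le \int_{\tU_n}R\,d\tmu+\int_{\tV_n}R\,d\tmu+\int_{\tT^{-1}\tV_n}R\,d\tmu,
\]
and since $\tmu(\tU_n),\tmu(\tV_n)\to 0$ and $R\in L^1(\tmu)$, absolute continuity of the integral forces $\mu(G_n)\to 1$.

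It therefore remains to identify the $\mu$-law of $\tilde\Sig_{\tU_n,\tV_n}\circ\tT$. Since the $x$-marginal of $\mu$ has density $R/\tmu(R)$ with respect to $\tmu$, this amounts to
\[
\frac{1}{\tmu(R)}\,\bbE_\tmu\!\big[R\,\bbI(\tilde\Sig_{\tU_n,\tV_n}\circ\tT=k)\big]\longrightarrow Geo(\rho)\{k\}\qquad\text{for each } k\in\bbN.
\]
By $\tT$-invariance together with Corollary~\ref{cor2.2} / Theorem~\ref{t.phimixing}, the unweighted quantity $\tmu(\tilde\Sig_{\tU_n,\tV_n}\circ\tT=k)$ already converges to $Geo(\rho)\{k\}$, so the real task is the covariance-type decoupling $\bbE_\tmu[R\,\bbI(\tilde\Sig_{\tU_n,\tV_n}\circ\tT=k)]-\tmu(R)\,\tmu(\tilde\Sig_{\tU_n,\tV_n}\circ\tT=k)\to 0$. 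For any fixed $N\ge 1$, off the event ``the base orbit meets $\tU_n\cup\tV_n$ at some time in $\{1,\dots,N\}$'' (of $\tmu$-probability at most $N(\tmu(\tU_n)+\tmu(\tV_n))\to 0$ for fixed $N$) one has $\tilde\Sig_{\tU_n,\tV_n}(\tT x)=\tilde\Sig_{\tU_n,\tV_n}(\tT^{N+1} x)$, and the right-hand side is $\cF_{N+1,\infty}$-measurable. Approximating $R$ in $L^1(\tmu)$ by bounded $\cF_{0,L}$-measurable functions $R_{L,M}$ (truncation followed by conditional expectation) and invoking the $\phi$-mixing estimate $|\bbE[fg]-\bbE[f]\bbE[g]|\le 2\|f\|_\infty\,\phi(N-L)$ for $f\in\cF_{0,L}$ and $g\in\cF_{N+1,\infty}$ with $\|g\|_\infty\le 1$ then delivers the decoupling, after sending $n\to\infty$, then $N\to\infty$, and finally $L,M\to\infty$, in that order.

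The main technical obstacle is this final decoupling step: because $R$ is only assumed integrable (neither bounded nor cylindrical), the four-parameter limit requires that the $L^1$-approximation errors shrink uniformly in $n$ while the $\phi$-mixing gap $N-L$ is kept open. Everything else reduces to well-placedness, which makes the tower orbit readable from the base orbit with at most one correction per fiber, and to elementary integrability estimates for $R$.
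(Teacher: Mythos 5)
Your reduction from the tower to the base — building a good set $G_n$ on which $\Sig_{U_n,V_n}(x,j)=\tilde\Sig_{\tU_n,\tV_n}(\tT x)$ and showing $\mu(G_n^c)\to 0$ from $\tmu(\tU_n),\tmu(\tV_n)\to 0$ and $R\in L^1(\tmu)$ — matches the paper's Lemma~\ref{l.tsig} (which does the same bookkeeping with the one-correction set $W_U$). The genuine divergence is in the distributional transfer to the reweighted measure with density $R/\tmu(R)$. You prove it by hand with a covariance decoupling: replace $\tilde\Sig_n\circ\tT$ by the $\cF_{N+1,\infty}$-measurable $\tilde\Sig_n\circ\tT^{N+1}$ at a cost of order $N(\tmu(\tU_n)+\tmu(\tV_n))$, approximate $R$ in $L^1$ by bounded $\cF_{0,L}$-measurable truncations, invoke $\phi$-mixing of the base, and iterate the limits $n\to\infty$, $N\to\infty$, $L,M\to\infty$ in that order. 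The paper instead cites Theorem~1 of Zweim\"uller~\cite{Z07}: since $\tilde\Sig_n-\tilde\Sig_n\circ\tT\to 0$ in $\tmu$-probability (the difference lives on a set of measure $O(\tmu(\tU_n)+\tmu(\tV_n))$), convergence in distribution under $\tmu$ passes automatically to any probability measure absolutely continuous with respect to $\tmu$, with no mixing hypothesis at all. Your route is self-contained and makes the $\phi$-mixing dependence quantitative, but it is considerably longer and requires the four-parameter limit; the paper's is a one-line application of a more general mixing limit theorem that needs only ergodicity of the base.

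One small gap to close in your decoupling step: the identity $\tilde\Sig_n(\tT x)=\tilde\Sig_n(\tT^{N+1}x)$ can fail when $\tT^{N+1}x\in\tV_n$ even if the orbit avoids $\tU_n\cup\tV_n$ at times $1,\dots,N$ — in that situation $\ttau_{\tV_n}(\tT x)=N$, so the left-hand side equals $0$, whereas the right-hand side may be positive. Enlarge the excluded event to include $\{\tT^{N+1}x\in\tV_n\}$; the probability bound becomes $(N+1)(\tmu(\tU_n)+\tmu(\tV_n))$, which still vanishes in the inner $n\to\infty$ limit, and the rest of the argument is unaffected.
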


\section{Some auxiliary lemmas and Corollary \ref{cor2.2}}\label{sec3}\setcounter{equation}{0}

We start with the following result which appears in \cite{GS} and in \cite{Ab}
as Lemma 1 but under the extra condition that $\cA$ is finite which is redundant
as the following proof shows.
\begin{lemma}\label{lem3.1}
Suppose that $P$ is $\phi$-mixing then there exists a constant $\up>0$ such that
for any $A\in\cC_n$,
\[
P(A)\leq e^{-\up n}.
\]
\end{lemma}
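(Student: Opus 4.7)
The plan is a standard $\phi$-mixing blocking argument: I would reduce the cylinder $A$ to an intersection of widely-spaced one-coordinate events whose conditional probabilities can each be bounded by a constant $\delta<1$. First I would set
\[
\gamma\;:=\;\sup_{a\in\cA}P([a]),
\]
which satisfies $\gamma<1$ under the standing hypothesis that $\cA$ is not a singleton and each $P([a])>0$. The point that makes the finite-alphabet hypothesis of \cite{GS},\cite{Ab} redundant is precisely that this inequality persists for countable $\cA$, since the $1$-cylinders are pairwise disjoint and their positive probabilities sum to $1$, forcing $\gamma$ to be attained as a strict maximum. Using $\phi(n)\to 0$, I would then pick an integer gap $g\ge 0$ with $\phi(g+1)\le\tfrac{1}{2}(1-\gamma)$, so that
\[
\delta\;:=\;\gamma+\phi(g+1)\;<\;1.
\]

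Given a cylinder $A=[a_0,\dots,a_{n-1}]\in\cC_n$, I would sparsify its defining conditions, retaining the value only at every $(g+1)$st coordinate. Precisely, set $L:=\lfloor(n-1)/(g+1)\rfloor+1\ge n/(g+1)$ and, for $j=0,\dots,L-1$, define
\[
G_j\;:=\;T^{-j(g+1)}\bigl[a_{j(g+1)}\bigr]\;\in\;\cF_{j(g+1),\,j(g+1)}.
\]
Clearly $A\subset\bigcap_{j=0}^{L-1}G_j$, and by $T$-invariance $P(G_j)=P([a_{j(g+1)}])\le\gamma$ for every $j$.

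The core step is the iterative application of $\phi$-mixing. For each $1\le k\le L-1$, $\bigcap_{j<k}G_j\in\cF_{0,(k-1)(g+1)}$ while $G_k\in\cF_{k(g+1),k(g+1)}$, so by~(2.2) these $\sigma$-algebras are separated by gap exactly $g+1$; consequently~(2.1) yields
\[
P\!\left(G_k\;\bigg|\;\bigcap_{j<k}G_j\right)\;\le\;P(G_k)+\phi(g+1)\;\le\;\gamma+\phi(g+1)\;=\;\delta.
\]
Telescoping these conditional bounds via the chain rule gives
\[
P(A)\;\le\;P\!\left(\bigcap_{j=0}^{L-1}G_j\right)\;\le\;\delta^{L-1}P(G_0)\;\le\;\delta^{L}\;\le\;\delta^{n/(g+1)}\;=\;e^{-\upsilon n},
\]
with $\upsilon:=(\log\delta^{-1})/(g+1)>0$, completing the proof.

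There is no genuine obstacle in this argument; the only design choice worth flagging is the use of single-coordinate (rather than longer) blocks $G_j$. This minimizes the spacing needed between consecutive blocks, so the $\phi$-coefficient invoked at each step of the iteration is exactly $\phi(g+1)$, and the only per-block bound needed is the trivial $P(G_j)\le\gamma$. This makes the argument insensitive to the cardinality of $\cA$ and is precisely what removes the finiteness assumption present in the cited references.
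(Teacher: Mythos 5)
Your proof is correct and is essentially identical to the paper's: both sparsify the cylinder to single-coordinate constraints spaced a fixed distance apart (your $g+1$, the paper's $k$), chosen so that $\gamma+\phi(\cdot)<1$, and then telescope the $\phi$-mixing bound. The only differences are notational; if anything your index bookkeeping with $\lfloor (n-1)/(g+1)\rfloor$ is slightly more careful than the paper's $[n/k]$, which can reach index $n$ when $k$ divides $n$, and one small slip in your prose is the claim that the supremum $\gamma$ is ``attained'' -- it need not be for countable $\cA$, though $\gamma<1$ still holds by the disjointness and positivity argument you give.
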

\begin{proof} Since $\gam=\sup_{a\in\cA}P([a])<1$ and $P$ is $\phi$-mixing, i.e. $\phi(n)\downarrow 0$ as $n\uparrow\infty$, we can set $k=\min\{ j:\,\gam+\phi(j)<1\}$. Let $A=[a_0,a_1,...,a_{n-1}]$. Then
\[
A\subset\bigcap_{i=0}^{[n/k]}T^{-ik}[a_{ik}].
\]
By the definition of the $\phi$-dependence coefficient,
\begin{eqnarray*}
&P(A)\leq P(\bigcap_{i=0}^{[n/k]}T^{-ik}[a_{ik}])\leq (P([a_{k[n/k]}])+\phi(k))P(\bigcap_{i=0}^{[n/k]-1}T^{-ik}[a_{ik}])\\
&\leq\cdots\leq(\gam+\phi(k))^{[n/k]}\gam\leq e^{-\up n}
\end{eqnarray*}
where $\up=-k^{-1}\ln(\gam+\phi(k))$ and, without causing a confusion, we use $[\cdot]$ both for the
integral part of a number and to denote 1-cylinders $[a],\, a\in\cA$.
\end{proof}

Next, we prove Corollary \ref{cor2.2} assuming that Theorem \ref{thm2.1} is already proved but,
first, we will need the following lemma.
In what follows, $\{m(n)\}_{n\ge1}$ is a sequence of positive integers with $m(n)\geq 1$ and
$|m(n)-n|=o(n)$ as $n\rightarrow\infty$. For $n\ge1$ we write $\ka^{\om,\eta}_{n,m}=\ka_{A^\om_n,A^\eta_m}$
and $b(n)=n\wedge m(n)$.
\begin{lemma}\label{lem3.2}Set $c=3\upsilon^{-1}$ and let $\mathcal{E}$
be the set of all $(\omega,\eta)\in\Omega\times\Omega$ for which
there exists $N=N(\omega,\eta)\ge1$ such that $\kappa^{\omega,\eta}_{n,m(n)}
\ge b(n)-c\ln b(n)$ for all $n\ge N$, then $P\times P(\Omega^{2}\setminus
\mathcal{E})=0$.
\end{lemma}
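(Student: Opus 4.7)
Setting $L_n := b(n)-c\ln b(n)$, the plan is to verify $\sum_n (P\times P)\{\kappa^{\omega,\eta}_{n,m(n)}<L_n\}<\infty$ and conclude via Borel--Cantelli. Since $\kappa_{U,V}=\min\{\pi(U,V),\pi(U),\pi(V)\}$, the bad event splits into the cross-intersection event $\{\pi(A^\omega_n,A^\eta_{m(n)})<L_n\}$ and the two self-period events $\{\pi(A^\omega_n)<L_n\}$ and $\{\pi(A^\eta_{m(n)})<L_n\}$; it suffices to handle each separately.

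The cross term is direct. Non-emptiness of $A^\omega_n\cap T^{-k}A^\eta_{m(n)}$ forces $\omega_k\cdots\omega_{k+\ell-1}=\eta_0\cdots\eta_{\ell-1}$ with $\ell=\min(n-k,m(n))\ge c\ln b(n)$ whenever $k<L_n$. Fubini together with Lemma~\ref{lem3.1} gives probability at most $e^{-\up\ell}\le b(n)^{-\up c}=b(n)^{-3}$; summing over $0\le k<L_n$ and the symmetric $V\cap T^{-k}U$ case yields $O(b(n)^{-2})$, summable in $n$ because $|m(n)-n|=o(n)$ forces $b(n)\sim n$.

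The self-period term is the core, and I handle $P\{\pi(A^\omega_n)<L_n\}$ (the $\eta$ case being symmetric) by combining two complementary bounds on $P(\Gamma_{n,k})$, where $\Gamma_{n,k}=\{\omega:\omega_{i+k}=\omega_i,\,0\le i<n-k\}$ and $\{\pi(A^\omega_n)<L_n\}\subseteq\bigcup_{k=1}^{L_n-1}\Gamma_{n,k}$. First, the inclusion $\Gamma_{n,k}\subseteq\{\omega_0\cdots\omega_{\ell-1}=\omega_k\cdots\omega_{k+\ell-1}\}$ for any $1\le\ell\le\min(k,n-k)$, combined with $\phi$-mixing at gap $k-\ell+1$ and Lemma~\ref{lem3.1}, gives $P(\Gamma_{n,k})\le e^{-\up\ell}+\beta^{-1}e^{-\beta(k-\ell)}$; optimizing $\ell\approx\beta k/(\up+\beta)$ produces $Ce^{-\xi k}$ with $\xi=\up\beta/(\up+\beta)$ in the regime $k\le K^{\ast}:=n(\up+\beta)/(\up+2\beta)$, while in the range $K^{\ast}<k\le L_n-1$ the constraint $\ell\le n-k$ binds and yields $e^{-\up(n-k)}+\beta^{-1}e^{-\beta(2k-n)}\le b(n)^{-3}+\beta^{-1}e^{-\beta\up n/(\up+2\beta)}$. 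Second, for the smallest $k$ where $Ce^{-\xi k}$ does not decay in $n$, I use $\Gamma_{n,k}\subseteq\{\omega_0=\omega_{sk}=\omega_{2sk}=\cdots=\omega_{jsk}\}$ with $s=\lceil s_0/k\rceil$ chosen so that $\gamma+\phi(sk)<(1+\gamma)/2$, where $\gamma:=\sup_{a\in\cA}P([a])<1$ (possible because $\phi(n)\to 0$); iterating $\phi$-mixing with gap $sk$ then gives $P(\Gamma_{n,k})\le((1+\gamma)/2)^{\lfloor(n-1)/(sk)\rfloor}$, exponentially small in $n$ for $k$ up to roughly $\sqrt n$. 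Patching the two bounds over $[1,L_n-1]$ yields $\sum_{k=1}^{L_n-1}P(\Gamma_{n,k})=O(nb(n)^{-3})+O(ne^{-\xi\sqrt n})$, summable in $n$.

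The main obstacle is obtaining uniform control of $P(\Gamma_{n,k})$ across the full range $k\in[1,L_n-1]$: the sparse-subsequence argument ceases to decay in $n$ once $k\gtrsim\sqrt n$, while the block-equality argument only benefits from the $\phi$-mixing gap once $k$ is a positive fraction of $n$, so the two regimes have to be patched together, and the assumed exponential rate $\phi(n)\le\beta^{-1}e^{-\beta n}$ is exactly what makes the intermediate range tractable.
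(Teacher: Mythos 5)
Your proof is correct. For the cross term $\pi(A_n^\omega, A_{m(n)}^\eta)$ it coincides with the paper's: fix one coordinate, observe that $\{\eta: T^{-k}A^\omega_n\cap A^\eta_{m(n)}\ne\emptyset\}$ (and the symmetric set) is a single cylinder of length $\ge c\ln b(n)$, bound it by Lemma~\ref{lem3.1}, then Fubini and Borel--Cantelli. Where you add genuine content is in the self-period terms $\pi(A_n^\omega)$ and $\pi(A_{m(n)}^\eta)$: the paper only asserts these are handled ``in a similar manner'', but, as you correctly identify, the cross-term mechanism does not transfer verbatim --- the periodicity set $\Gamma_{n,k}=\{\omega:\omega_i=\omega_{i+k},\,0\le i<n-k\}$ is a (possibly infinite) union of $n$-cylinders indexed by $\cA^k$ rather than a single cylinder, so Lemma~\ref{lem3.1} alone gives no bound on $P(\Gamma_{n,k})$. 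Your two complementary estimates are a legitimate and complete fix: the block-match bound $P(\Gamma_{n,k})\le e^{-\upsilon\ell}+\phi(k-\ell+1)$ with $\ell\le\min(k,n-k)$, which after tuning $\ell$ handles $k\gtrsim\sqrt{n}$ and exploits the exponential rate $\phi(n)\le\beta^{-1}e^{-\beta n}$ from Corollary~\ref{cor2.2}; and the sparse-subsequence bound $P(\Gamma_{n,k})\le(\gamma+\phi(sk))^{\lfloor(n-1)/(sk)\rfloor}$, $sk\ge s_0$, which handles $k\lesssim\sqrt{n}$ using only $\phi(n)\to 0$. The regimes of $k\in[1,L_n)$ are covered, the sum is $O(n\,b(n)^{-3})$ up to exponentially small corrections, and Borel--Cantelli concludes. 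The one point worth recording is that this part of the argument, unlike the cross term, does use the exponential $\phi$-mixing hypothesis of Corollary~\ref{cor2.2}, which is exactly the setting in which Lemma~\ref{lem3.2} is invoked, so no generality actually used by the paper is lost.
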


\begin{proof}
For $\omega\in\Omega$ and $n\ge1$ set
\[
B_{\omega,n}=\{\eta\in\Omega\::\:\pi(A_{n}^{\omega},A_{m(n)}^{\eta})\le b(n)-
c\ln b(n)\}.
\]
Assume $b(n)-c\ln b(n)\ge1$. Set $d=d(n)=[b(n)-c\ln b(n)]$ and assume that $d\geq 1$.
Then
\[
P(B_{\omega,n})\le\sum_{r=0}^{d}P\{\eta\::\:
T^{-r}A_{n}^{\omega}\cap A_{m(n)}^{\eta}\ne\emptyset\}+\sum_{r=0}^{d}P
\{\eta\::\:T^{-r}A_{m(n)}^{\eta}\cap A_{n}^{\omega}\ne\emptyset\}.
\]
For $0\le r\le d$,
\[
\{\eta\::\:T^{-r}A_{n}^{\omega}\cap A_{m(n)}^{\eta}\ne\emptyset\}
=T^{-r}[\omega_{0},...,\omega_{n\wedge (m(n)-r)-1}]
\]
and
\[
\{\eta\::\:T^{-r}A_{m(n)}^{\eta}\cap A_{n}^{\omega}\ne\emptyset\}
=[\omega_{r},...,\omega_{n\wedge(m(n)+r)-1}].
\]
Hence by Lemma \ref{lem3.1} for all $n$ large enough,
\begin{eqnarray*}
P(B_{\omega,n}) & \le &
\sum_{r=0}^{d}e^{-\upsilon(n\wedge (m(n)-r))}
+\sum_{r=0}^{d}e^{-\upsilon((n-r)\wedge m(n))}\\& \le &
2\sum_{r=0}^{d}e^{-\upsilon(b(n)-r)}\le
2\frac{e^{-\upsilon(b(n)-d)}}{1-e^{-\upsilon}}\le
\frac{2b(n)^{-3}}{1-e^{-\upsilon}}.
\end{eqnarray*}
From this and since $|b(n)-n|=o(n)$ it follows that $\sum_{n=1}^{\infty}
P(B_{\omega,n})<\infty$,
and so by the Borel-Cantelli lemma
\[
P\{\eta\::\:\#\{n\ge1\::\:\eta\in B_{\omega,n}\}=\infty\}=0\:.
\]
From Fubini's theorem we now get,
\begin{multline*}
P\times P\{(\omega,\eta)\::\:\#\{n\ge1\::\:\pi(A_{n}^{\omega},
A_{m(n)}^{\eta})\le b(n)-c\ln b(n)\}=\infty\}\\
=\int_{\Omega}P\{\eta\::\:\#\{n\ge1\::\:\eta\in B_{\omega,n}\}=
\infty\}\:dP(\omega)=0\:.
\end{multline*}
In a similar manner it can be shown that
\[
P\{\omega\::\:\#\{n\ge1\::\:\pi(A_{n}^{\omega})\le b(n)-c\ln b(n)\}=\infty\}
=0\:
\]
and
\[
P\{\eta\::\:\#\{n\ge1\::\:\pi(A_{m(n)}^{\eta})\le b(n)-c\ln b(n)\}=\infty\}
=0\:
.\]
This completes the proof of the lemma.
\end{proof}
\begin{proof}[Proof of Corollary \ref{cor2.2}]
Let $\kappa^{\omega,\eta}_{n,m(n)}\ge b(n)-c\ln b(n)$ for all $n\ge N$ where
 $N=N(\omega,\eta)\ge1$.

Let $c$ and $\mathcal{E}$ be as in the statement of Lemma \ref{lem3.2}.
Denote by $h$ the entropy of the system $(\Omega,P,T)$ which is finite under our
 assumptions. Let $\mathcal{E}_{0}$ be the set of all $(\omega,\eta)\in\mathcal{E}
\cap(\Omega_{P}\times\Omega_{P})$
for which
\[
-\underset{n\to\infty}{\lim}\frac{\log P(A_{n}^{\omega})}{n}=
-\underset{n\to\infty}{\lim}
\frac{\log P(A_{n}^{\eta})}{n}=h\:.
\]
Recall (see, for instance, \cite{Br}) that our $\phi$-mixing (and even
$\al$-mixing) assumption implies that the shift $T$ is mixing in the
ergodic theory sense with respect to the invariant measure $P$, and so it
is ergodic. Hence, we can apply the Shannon-McMillan-Breiman Theorem
(see, for instance, \cite{Pe}) which implies that the above
 equalities hold true with probability one and $h\geq\up>0$ by
Lemma \ref{lem3.1}. This together with  Lemma \ref{lem3.2}
yields that $P\times P(\Omega^{2}\setminus\mathcal{E}_{0})=0$. Now, let
$(\om,\eta)\in\cE_0$. Taking in Theorem \ref{thm2.1}
$M=M(n)=[e^{(h+\ve)n}]$ with small enough $\ve>0$, $R=R(n)=n^2$ and $r=[n/2]$
 we obtain (\ref{2.6}) from (\ref{2.5}) while assuming (\ref{2.7}) the second assertion
  follows directly from (\ref{2.6}).
\end{proof}

For the proof of Theorem \ref{thm2.1} we will need the following general result.
\begin{lemma}\label{lem3.3} Let $\cF_1,\,\cF_2$ and $\cF_3$ be sub $\sig$-algebras of $\cF$
such that
\begin{equation}\label{1}
\max(\phi(\cF_1,\sig(\cF_2,\cF_3)),\,\phi(\cF_2,\cF_3))=\del
\end{equation}
where $\sig(\cG,\tilde\cG)$ denotes the minimal $\sig$-algebra containing $\cG$ and $\tilde\cG$.
Then
\begin{equation}\label{2}
\al(\cF_2,\,\sig(\cF_1,\cF_3))\leq 3\del.
\end{equation}
\end{lemma}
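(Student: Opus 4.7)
The plan is to estimate $|P(B\cap E)-P(B)P(E)|$ for $B\in\cF_2$ and $E$ running over the algebra generated by $\cF_1\cup\cF_3$, showing this difference is at most $3\delta$. Then by a standard monotone class (or continuity of $P$) argument the same bound extends to $\sigma(\cF_1,\cF_3)$, which is exactly the $\alpha$-mixing bound we want.

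The key structural fact is that every $E$ in the algebra generated by $\cF_1\cup\cF_3$ can be written as a finite disjoint union $E=\bigsqcup_{i=1}^{N}A_i\cap C_i$ where the $A_i\in\cF_1$ are pairwise disjoint and $C_i\in\cF_3$ (refine to the atoms of a finite subalgebra of $\cF_1$). For any $B\in\cF_2$ this gives
\[
P(B\cap E)-P(B)P(E)=\sum_{i=1}^{N}\bigl(P(B\cap A_i\cap C_i)-P(B)P(A_i\cap C_i)\bigr).
\]
I would then split each summand by inserting the telescoping terms $P(A_i)P(B\cap C_i)$ and $P(A_i)P(B)P(C_i)$, writing the $i$-th summand as
\[
\bigl[P(B\cap A_i\cap C_i)-P(A_i)P(B\cap C_i)\bigr]
+P(A_i)\bigl[P(B\cap C_i)-P(B)P(C_i)\bigr]
+P(B)\bigl[P(A_i)P(C_i)-P(A_i\cap C_i)\bigr].
\]

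Each of the three brackets is controlled by one of the two mixing hypotheses. The first and third use $\phi(\cF_1,\sigma(\cF_2,\cF_3))\le\delta$ applied to $A_i\in\cF_1$ against $B\cap C_i\in\sigma(\cF_2,\cF_3)$ and against $C_i\in\cF_3\subset\sigma(\cF_2,\cF_3)$ respectively, each bounded in absolute value by $\delta\,P(A_i)$. The middle bracket uses $\phi(\cF_2,\cF_3)\le\delta$ applied to $B$ and $C_i$, which is bounded by $\delta$ (after an application to $B$ or $B^c$, whichever has smaller probability, though the crude bound $\delta$ suffices). Summing over $i$ and using $\sum_i P(A_i)\le 1$ (from pairwise disjointness) together with $P(B)\le 1$, each of the three aggregated terms contributes at most $\delta$, giving
\[
|P(B\cap E)-P(B)P(E)|\le 3\delta.
\]
Passing from the generating algebra to $\sigma(\cF_1,\cF_3)$ by $\sigma$-additivity of $P$ then yields $\alpha(\cF_2,\sigma(\cF_1,\cF_3))\le 3\delta$.

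I do not expect any serious obstacle here: the only subtle point is finding the right three-term telescoping decomposition so that each remainder is covered by exactly one of the two available $\phi$-mixing bounds; everything else is bookkeeping and a routine extension from an algebra to its generated $\sigma$-algebra.
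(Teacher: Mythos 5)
Your proof is correct and follows essentially the same route as the paper: the same three-term telescoping decomposition of $P(B\cap A_i\cap C_i)-P(B)P(A_i\cap C_i)$, the same application of the two $\phi$-mixing bounds (the first and third brackets via $\phi(\cF_1,\sigma(\cF_2,\cF_3))$, the middle via $\phi(\cF_2,\cF_3)$), summation over disjoint $\cF_1$-atoms, and extension from the generating algebra to $\sigma(\cF_1,\cF_3)$ by monotone approximation. The only cosmetic difference is that you justify the "disjoint-rectangle" form of sets in the algebra by refining to atoms of a finite subalgebra of $\cF_1$, whereas the paper instead directly asserts that this collection of sets is an algebra.
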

\begin{proof} Let $U_1\in\cF_1,\, U_2\in\cF_3$ and $V\in\cF_2$. Then
\begin{eqnarray*}
&|P(U_1\cap V\cap U_2)-P(U_1)P(V\cap U_2)|\leq\del P(U_1),\\
& P(U_1)|P(V\cap U_2)-P(V)P(U_2)|\leq\del P(U_1)P(V)\\
&\mbox{and}\,\,\, P(V)|P(U_1)P(U_2)-P(U_1\cap U_2)|\leq\del P(U_1)P(V).
\end{eqnarray*}
Hence
\begin{equation}\label{3}
|P(U_1\cap V\cap U_2)-P(U_1\cap U_2)P(V)|\leq\del P(U_1)(1+2P(V))\leq 3\del P(U_1).
\end{equation}

Next, consider the collection $\cU$ of all sets of the form $U=\bigcup_{i=1}^l(U_1^{(i)}\cap U_2^{(i)})$
where $U_1^{(i)}\in\cF_1,\, i=1,...,l$ are disjoint while $U^{(i)}_2\in\cF_3,\, i=1,...,l$
are arbitrary. Clearly, finite unions and intersections of sets from $\cU$ belong to $\cU$, and so
$\cU$ is an algebra of sets as $\Om$ and $\emptyset$ belong to $\cU$, as well. Now, if
$U=\bigcup_{i=1}^l(U_1^{(i)}\cap U_2^{(i)})$ and $V$ are as above then by (\ref{3}),
\begin{eqnarray}\label{4}
&|P(U\cap V)-P(U)P(V)|=|\sum_{i=1}^lP(U_1^{(i)}\cap V\cap U_2^{(i)})\\
&-P(V)\sum_{i=1}^lP(U_1^{(i)}\cap U_2^{(i)}|\leq\sum_{i=1}^l|P(U_1^{(i)}\cap V\cap U_2^{(i)})\nonumber\\
&-P(V)P(U_1^{(i)}\cap U_2^{(i)}|\leq 3\del P(\bigcup_{i=1}^lU_1^{(i)})\leq 3\del.\nonumber
\end{eqnarray}
The estimate (\ref{4}) being true for all $U_j\in\cU$ remains valid under monotone limits
$U_j\uparrow$ and $U_j\downarrow$, and so it holds true for any $U\in\sig(\cF_1,\cF_3)$ and
$V\in\cF_2$, yielding (\ref{2}).
\end{proof}

We will need the following result which is, essentially, an exercise in
elementary probability whose proof can be found in \cite{KR1}.
\begin{lemma}\label{lem3.4}
Let $Y=\big\{ Y_{k,l}:\, k\geq 0$ and $l\in\{ 0,1\}\big\}$ be independent
Bernoulli random variables such that
$1>P\{ Y_{k,0}=1\}=p=1-P\{ Y_{k,0}=0\}>0$ and $1>P\{ Y_{k,1}=1\}=q=1-
P\{ Y_{k,1}=0\}>0$.
Set $\tau=\min\{ l\geq 0:\, Y_{l,0}=1\}$. Then $S=\sum_{l=0}^{\tau-1}Y_{l,1}$
is a geometric random variable with the parameter $p(p+q-pq)^{-1}$.
\end{lemma}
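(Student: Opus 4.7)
The plan is to decompose each trial index $l\geq 0$ according to which of three disjoint events occurs: $H_l=\{Y_{l,0}=1\}$ (``halt''), $C_l=\{Y_{l,0}=0,\,Y_{l,1}=1\}$ (``count''), and $I_l=\{Y_{l,0}=0,\,Y_{l,1}=0\}$ (``idle''), with probabilities $p$, $(1-p)q$, and $(1-p)(1-q)$ respectively. By definition, $\tau$ is the first index $l$ at which $H_l$ occurs, and $S=\sum_{l=0}^{\tau-1}Y_{l,1}$ counts exactly those $l<\tau$ at which $C_l$ occurs, since the idle indices contribute nothing to the sum.

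Because the pairs $(Y_{l,0},Y_{l,1})$ are independent in $l$, the resulting sequence of labels $X_l\in\{H,C,I\}$ is i.i.d. The key step is the standard thinning fact: removing all $I$-indices from an i.i.d.\ sequence on $\{H,C,I\}$ produces an i.i.d.\ sequence on $\{H,C\}$ with renormalized probabilities
\[
\tilde P(H)=\frac{p}{p+q-pq},\qquad \tilde P(C)=\frac{(1-p)q}{p+q-pq}.
\]
This is verified either by conditioning directly on the random set of $I$-indices, or by observing that conditional on $X_l\neq I$ the label distribution is exactly the renormalization of $P$ to $\{H,C\}$. After this reduction, $S$ is the count of $C$'s appearing strictly before the first $H$ in the thinned i.i.d.\ sequence, which by definition of the geometric distribution equals a $Geo(p/(p+q-pq))$ random variable.

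An alternative direct computation would condition on $\tau=n$: since $P(\tau=n)=(1-p)^n p$ and the variables $Y_{0,1},\ldots,Y_{n-1,1}$ are independent of $\tau$, we have $S\mid\{\tau=n\}\sim Bin(n,q)$, whence
\[
P(S=k)=p\sum_{n\geq k}(1-p)^n\binom{n}{k}q^k(1-q)^{n-k},
\]
and the identity $\sum_{m\geq 0}\binom{m+k}{k}x^m=(1-x)^{-k-1}$ applied with $x=(1-p)(1-q)$ immediately collapses this to the desired geometric form with parameter $p/(p+q-pq)$. I do not expect a real obstacle in either approach; the only step requiring mild care is the justification of the thinning reduction, but this is straightforward from the i.i.d.\ label structure.
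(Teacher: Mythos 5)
The paper does not include its own proof of Lemma \ref{lem3.4}; it describes the statement as ``essentially, an exercise in elementary probability'' and defers to \cite{KR1}. Your argument is a correct, self-contained proof, and in fact you supply two valid routes. The thinning argument is clean: partitioning each index into the three exhaustive, mutually exclusive label events $H_l$, $C_l$, $I_l$ is exactly the right decomposition, the removal of the $I$-indices yields an i.i.d.\ sequence on $\{H,C\}$ with the renormalized law (the only detail worth being explicit about is that since $p>0$ the thinned sequence is a.s.\ infinite, so ``the first $H$'' exists), and $S$ is precisely the number of $C$'s preceding the first $H$, which is geometric with parameter $\tilde P(H)=p/(p+q-pq)$. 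The direct computation also checks out: conditioning on $\tau=n$ gives $S\mid\{\tau=n\}\sim\mathrm{Bin}(n,q)$ because the $Y_{l,1}$ are independent of the $Y_{l,0}$ and hence of $\tau$; substituting $m=n-k$ and applying the negative-binomial series with $x=(1-p)(1-q)$ yields
\[
P(S=k)=\frac{p}{p+q-pq}\Bigl(\frac{(1-p)q}{p+q-pq}\Bigr)^{k},
\]
and since $1-\frac{p}{p+q-pq}=\frac{(1-p)q}{p+q-pq}$ this is exactly $\mathrm{Geo}\bigl(p/(p+q-pq)\bigr)$ in the convention used in the paper. Either argument would serve as a complete proof.
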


\section{Proof of Theorem \ref{thm2.1}}\label{sec4}\setcounter{equation}{0}

 Define random variables $X_{k,0}$ and $X_{k,1}$ on $(\Omega,\mathcal{F},P)$ by
\[
X_{k,0}=\bbI_{V}\circ T^k\text{ and }X_{k,1}=\bbI_{U}\circ T^k,
\]
and set
\[
S_M=\sum_{k=0}^{M-1}X_{k,1},\,\,\tau=\min\{ k\geq 0:\, X_{k,0}=1\}\,\,\mbox{and}\,\,
\tau_M=\min(\tau,M).
\]
Then $S_\tau=\Sig_{V,M}$ which appears in Theorem \ref{thm2.1}.
Let $\{ Y_{k,\al}\::\: k\geq 0,\,\al=0,1\}$ be a sequence of independent Bernoulli random variables
 such that $Y_{k,\al}$ has the same distribution as $X_{k,\al}$. Set
\[
S^*_M=\sum_{k=0}^{M-1}Y_{k,1},\,\,\tau^*=\min\{ k\geq 0:\, Y_{k,0}=1\}\,\,\mbox{and}\,\,
\tau^*_M=\min(\tau^*,M).
\]
Denote $p_V=P\{ X_{k,0}=1\}=P\{ Y_{k,0}=1\}$ and $p_U=P\{ X_{k,1}=1\}=P\{ Y_{k,1}=1\}$,
$k=0,1,...$. Observe that $S^*_{\tau^*}$ has by Lemma \ref{lem3.4} the geometric distribution
with the parameter
\[
\vr=\frac {p_V}{p_V+p_U(1-p_V)}>\rho=\frac {p_V}{p_V+p_U}.
\]
Next, we can write
\begin{equation}\label{a0}
d_{TV}(\cL(S_\tau),\,\mbox{Geo}(\rho))\leq A_1+A_2+A_3+A_4
\end{equation}
where $A_1=d_{TV}(\cL(S_\tau),\cL(S_{\tau_M}))$,  $A_2=d_{TV}(\cL(S_{\tau_M}),\cL(S^*_{\tau^*_M}))$,
$A_3=d_{TV}(\cL(S^*_{\tau^*_M}),\cL(S^*_{\tau^*}))$ and $A_4=d_{TV}(\mbox{Geo}(\vr),\mbox{Geo}(\rho))$.

Introduce random vectors $\bfX_{M,\al}=\{ X_{k,\al},\, 0\leq k\leq M\}$, $\al=0,1$, $\bfX_{M}=\{\bfX_{M,0},
\bfX_{M,1}\}$, $\bfY_{M,\al}=\{ Y_{k,\al},\, 0\leq k\leq M\}$, $\al=0,1$ and $\bfY_{M}=\{\bfY_{M,0},
\bfY_{M,1}\}$. Observe that the events $\{ S_\tau\ne S_{\tau_M}\}$ or $\{ S^*_{\tau^*}\ne S^*_{\tau_M^*}\}$
can occur only if $\{\tau>M\}$  or $\{\tau^*>M\}$, respectively. Also, we can write $\{\tau>M\}=\{ X_{k,0}=0$
for all $k=0,1,...,M\}$ and $\{\tau^*>M\}=\{ Y_{k,0}=0$ for all $n=0,1,...,M\}$. Hence,
 \begin{eqnarray}\label{a1}
 &A_1\leq P\{\tau>M\}\leq P\{\tau^*>M\}+|P\{ X_{k,0}=0\,\,\mbox{for}\,\, k=0,1,...,M\}\\
 &-P\{ Y_{k,0}=0\,\,\mbox{for}\,\, k=0,1,...,M\}|\leq P\{\tau^*>M\}\nonumber\\
 &+d_{TV}(\cL(\bfX_{M,0}),\cL(\bfY_{M,0}))\leq (1-p_V)^{M+1}+d_{TV}(\cL(\bfX_M),\,\cL(\bfY_M)).\nonumber
 \end{eqnarray}
 Also,
 \begin{equation}\label{a2}
 A_3\leq P\{ \tau^*>M\}=(1-p_V)^{M+1}.
 \end{equation}
 The estimate of $A_4$ is also easy
 \begin{eqnarray}\label{a3}
 &A_4\leq \sum_{k=0}^\infty |\vr(1-\vr)^k-\rho(1-\rho)^k|\leq 2\sum_{k=1}^\infty((1-\rho)^k-(1-\vr)^k)\\
 &=2(1-\rho)\rho^{-1}-2(1-\vr)\vr^{-1}=\frac {2(\vr-\rho)}{\rho\vr}=2p_U.\nonumber
 \end{eqnarray}

Next, clearly,
\begin{equation}\label{a4}
A_2\leq d_{TV}(\cL(\bfX_M),\cL(\bfY_M))
\end{equation}
and it remains to estimate the right hand side of (\ref{a4}). By Theorem 3 in \cite{AGG},
\begin{equation}\label{a5}
d_{TV}(\cL(\bfX_M),\cL(\bfY_M))\leq 2b_1+2b_2+b_3+2\sum_{0\leq n\leq M,\al=0,1}q^2_{n,\al},
\end{equation}
(the additional factor 2 in \cite{AGG} is due to a different definition of $d_{TV}$), where
$q_{n,0}=p_V$ and $q_{n,1}=p_U$. In order to define $b_1,b_2$ and $b_3$ set
\begin{eqnarray*}
&B^{M,R}_{k,\al}=\{(l,\al),(l,1-\al):\, 0\leq l\leq M,\, |l-k|\leq R\}\,\,\mbox{and}\\
& I_M=\{(k,\al):\, 0\leq k\leq M,\,\al=0,1\}.
\end{eqnarray*}
Then
\[
b_1=\sum_{(k,\al)\in I_M}\sum_{(l,\be)\in B^{M,R}_{k,\al}}q_{k,\al}q_{l,\be},
\]
\begin{equation}\label{b2}
b_2=\sum_{(k,\al)\in I_M}\sum_{(k,\al)\ne(l,\be)\in B^{M,R}_{k,\al}}q_{(k,\al),(l,\be)},
\end{equation}
where $q_{(k,\al),(l,\be)}=E(X_{k,\al}X_{l,\be})$, and
\[
b_3=\sum_{(k,\al)\in I_M}s_{k,\al},
\]
where
\[
s_{k,\al}=E\big\vert E\big(X_{k,\al}-q_{k,\al}|\sig\{ X_{l,\be}:\,(l,\be)\in I_M\setminus
 B^{M,R}_{k,\al}\}\big)\big\vert.
 \]

Clearly,
\begin{equation}\label{a6}
b_1\leq 2MR(p^2_V+2p_V p_U +p^2_U)=2MR(p_V+p_U)^2.
\end{equation}
In order to estimate $b_2$ consider two nonempty sets $D\in\cF_{0,n-1}$ and $E\in\cF_{0,m-1}$.
Then both sets are finite or countable unions of corresponding cylinder sets
\[
D=\cup_i[d_0^{(i)},d_1^{(i)},...,d_{n-1}^{(i)}]\quad\mbox{and}\quad
E=\cup_j[e_0^{(j)},e_1^{(j)},...,e_{m-1}^{(j)}].
\]
Assume that $D\cap T^{-k}E\ne\emptyset$ and suppose that $k+r\geq n$ where an integer $r$
satisfies $0\leq r<m$. Set $E_r=T^rE$, then $T^{-r}E_r\supset E$, and so
$D\cap T^{-k}E\subset D\cap T^{-(k+r)}E_r$. Since $D\in\cF_{0,n-1}$ and
 $T^{-(k+r)}E_r\in\cF_{k+r,\infty}$ we obtain  by the definition of the $\phi$-dependence
coefficient that
\begin{equation}\label{a7}
P(D\cap T^{-k}E)\leq P(D\cap T^{-(k+r)}E_r)\leq P(D)(P(E_r)+\phi(k+r-n+1)).
\end{equation}
If $k\geq l$ and $k-l<\pi(U)$ then $X_{k,1}X_{l,1}=0$, if $k\geq l$ and $k-l<\pi(V)$
then $X_{k,0}X_{l,0}=0$, if $k\geq l$ and $k-l<\pi(U,V)$ then $X_{k,1}X_{l,0}=0$ and,
similarly, if $k\geq l$ and $k-l<\pi(V,U)$ then $X_{k,0}X_{l,1}=0$. Thus, if
$X_{k,\al}X_{l,\be}\ne 0$ and $k\geq l$ then we must have $k-l\geq\ka_{U,V}$. Hence, by
(\ref{a7}), if $k-l\geq\pi(U)$ and $\al=\be=1$ then
\[
EX_{k,\al}X_{l,\be}=P(U\cap T^{-(k-l)}U)\leq p_U(P(U_r)+\phi(\pi(U)+r-n+1))
\]
where $U_r=T^rU$ and $0\leq r<n$. If $k-l\geq\pi(V)$ and $\al=\be=0$ then
\[
EX_{k,\al}X_{l,\be}\leq p_V(P(V_r)+\phi(\pi(V)+r-n+1))
\]
where $V_r=T^rV$ and $0\leq r<m$. If $k-l\geq\pi(U,V)$ and $\al=0,\be=1$ then
\[
EX_{k,\al}X_{l,\be}=P(V\cap T^{-(k-l)}U)\leq p_V(P(U_r)+\phi(\pi(U,V)+r-m+1)).
\]
Finally, if $k-l\geq\pi(U,V)$ and $\al=1,\be=0$ then
\[
EX_{k,\al}X_{l,\be}\leq p_U(P(V_r)+\phi(\pi(U,V)+r-n+1)).
\]
It follows that
\begin{equation}\label{a8}
b_2\leq 4MR(p_U+p_V)(P(U_r)+P(V_r)+\phi(\ka_{U,V}+r-m\vee n)).
\end{equation}

In order to estimate $b_3$ we will use Lemma \ref{lem3.3} which gives
\[
s_{k,\gam}\leq\al\big(\cF_{k,k+n},\sig(\cF_{0,k-R+n\vee m},\cF_{k+R-n\vee m,\infty})\big)
\leq 3\phi(R-n\vee m),\,\,\gam=0,1.
\]
It follows that
\begin{equation}\label{a9}
b_3\leq 6M\phi(R-n\vee m).
\end{equation}
Finally, from (\ref{a0})--(\ref{a6}), (\ref{a8}) and (\ref{a9}) we obtain that
\begin{eqnarray*}
&d_{TV}(\cL(S_\tau),\mbox{Geo}(\rho))\leq 2(1-p_V)^{M+1} +2p_U+8MR(p_V+p_U)^2\\
&+16MR(p_U+p_V)\phi(\ka_{U,V}-|m-n|)+12M\phi(R-n\vee m)+4M(p^2_U+p^2_V)
\end{eqnarray*}
and (\ref{2.5}) follows.
\qed

\begin{remark} Unlike \cite{KR1} where $\psi$-mixing setups were considered, here under only
$\phi$-mixing we cannot extend the result either to stationary processes case or to a nonconventional
 setup. For the former the problem arises in estimates for $b_2$ (close correlations), since above, close
 correlations $EX_{k,\al}X_{l,\be}$ with $|k-l|<\pi(U,V)$ are zero while in the stationary process case we do
 not have sufficient control of close correlations under just $\phi$-mixing.
 In a nonconventional setup we would need a result of the type Lemma \ref{lem3.3} with more than 3
 $\sig$-algebras similar to Lemma 3.3 in \cite{KR} which was proved under $\psi$-mixing. Such a result
 is not available under $\phi$-mixing.
 \end{remark}

\section{Estimates on balls approximations}\label{sec5}\setcounter{equation}{0}
Throughout this section, $\mu$ is a $T$ invariant ergodic measure, and $\cA$ is a measurable, generating partition of $\bfM$ satisfying Assumption A\ref{diameter}. To simply the notations, we will also assume that A\ref{diameter} holds true with $C=1$. We will demonstrate now how to approximate a geometric ball $B_r(x)$ by a union
of $n$-cylinders.
For every $x \in M$, $r>0$ and $k\in \mathbb{N}$ define
$$
U^-(x,r,k) = \bigcup_{A\in\cA^k,\, A\subset B_r(x)} A\subset B_r(x)
$$
 which is the union of all $k$-cylinders contained in $B_r(x)$. Since $\cA$ is generating, $U^-(x,r,k)\ne\emptyset$ for sufficiently large $k$.

Also define
$$
U^+(x,r,k) = \bigcup_{A\in\cA^k, \,A\cap B_r(x)\ne\emptyset} A,
$$
Then, $B_r(x)\subset U^+(x,r,k) \subset B_{r+k^{-p}}(x)$ due to Assumption A1.

The difference between $U^-(x,r,k)$ and $U^+(x,r,k)$ is
$$
U^+(x,r,k)\setminus U^-(x,r,k)=\bigcup_{A\in\cA^k, \,A\cap B_r(x)\ne\emptyset,\, A\not\subset B_r(x)} A.
$$
In order to get a nice approximation of $B_r(x)$ by $U^\pm(x,r,k)$, we need to make the diameter of $k$-cylinders small comparing to the size of the ball. For this purpose, we fix some $w>1$ that will be determined in Section~\ref{sec7} and put
$$
n=n(r) = \floor{r^{-\frac{w}{p}}}+1.
$$
This guarantees that $\diam\cA^{n} < n(r)^{-p}<r^w$. It follows that
$$
U^+(x,r,n(r))\setminus U^-(x,r,n(r)) \subset B_{r+r^w}(x)\setminus B_{r-r^w}(x).
$$
In particular, this shows that
$$
B_{r-r^w}(x)\subset U^-(x,r,n(r))\subset B_r(x),
$$
and the difference between $U^-(x,r,n(r))$ and $B_r(x)$ is  small:
\begin{equation}\label{e.approximation}
\mu(B_r(x)\setminus U^-(x,r,n(r))) < \mu(B_{r}(x)\setminus B_{r-r^w}(x))\le C\, r^{wa-b}\, \mu(B_r(x)),
\end{equation}
according to Assumption A\ref{regularity}. Similarly we have
$$
B_{r}(x)\subset U^+(x,r,n(r))\subset B_{r+r^w}(x),
$$
with difference given by
\begin{equation}
\mu(U^+(x,r,n(r))\setminus B_r(x)) < \mu(B_{r+r^w}(x)\setminus B_{r}(x))\le C\, r^{wa-b}\, \mu(B_r(x)).
\end{equation}

\section{Recurrence rate for metric balls and estimate on the short return.}\label{sec6}\setcounter{equation}{0}
Define the lower recurrence rate for points $x$ and $y$ by
$$
\underline{R}(x,y) =\liminf_{r\to 0}\frac{\log \tau_{B_r(y)}(x)}{-\log r}
$$
and set $\underline{R}(x) = \underline{R}(x,x)$. The latter was studied in ~\cite{RS, PS,GRS}.
The following Theorem was first stated in~\cite{RS} for fast mixing systems (systems that has super-polynomial decay of correlation), and  for Young's towers with polynomial tails in \cite{PS} as Theorem 6.2.
\begin{theorem}\label{6.1}
Assume that Assumptions A1--A4 are satisfied. Then $\underline{R}(x) = \dim_H\mu$, $\mu$-almost everywhere.
\end{theorem}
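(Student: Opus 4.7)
The plan is to establish the two inequalities $\underline{R}(x) \leq d$ and $\underline{R}(x) \geq d$ separately for $\mu$-a.e.\ $x$, where $d = \dim_H\mu$ coincides with the pointwise dimension by A3 and the result of \cite{P97} cited after A3.

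For the upper bound $\underline{R}(x) \leq d$, I would fix $\epsilon > 0$ and set $r_N = e^{-N}$. By A3 and a standard covering lemma, a set of full $\mu$-measure can be covered by at most $K_N \leq r_N^{-(d+\epsilon/2)}$ sets $Q_i$ of diameter less than $r_N$. Setting $S_N = r_N^{-(d+\epsilon)}$, ergodicity (which follows from A2) gives Kac's identity $\int_{Q_i}\tau_{Q_i}\,d\mu = 1$, so Markov's inequality yields $\mu\{x \in Q_i : \tau_{Q_i}(x) > S_N\} \leq 1/S_N$. Since $\diam Q_i < r_N$ implies $\tau_{Q_i}(x) \geq \tau_{B_{r_N}(x)}(x)$ for $x \in Q_i$, summing over $i$ gives $\mu\{\tau_{B_{r_N}(x)}(x) > S_N\} \leq K_N/S_N \leq r_N^{\epsilon/2}$, which is summable. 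Borel--Cantelli then yields $\underline{R}(x) \leq d + \epsilon$ almost surely, and letting $\epsilon \to 0$ along a countable sequence completes this step. Notably, neither A1, A2, nor A4 are needed here beyond the ergodicity consequence of A2.

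For the lower bound $\underline{R}(x) \geq d$, which is the main obstacle, I would fix $\epsilon > 0$, a geometric scale $r_N \to 0$ and $S_N = r_N^{-(d-\epsilon)}$, and show that $\mu(G_{r_N})$ is summable in $N$, where
$$G_{r_N} = \{x : \tau_{B_{r_N}(x)}(x) \leq S_N\} \subset \bigcup_{k=1}^{S_N}\{x : T^k x \in B_{r_N}(x)\}.$$
Setting $n = n(r_N)$ as in Section~\ref{sec5}, each $A \in \cA^n$ has diameter at most $r_N^w$, so $T^k x \in B_{r_N}(x)$ with $x \in A$ forces $T^k x \in U^+(x_A, 2r_N, n)$ for any reference point $x_A \in A$. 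Hence
$$\mu(G_{r_N}) \leq \sum_{k=1}^{S_N}\sum_{A \in \cA^n}\mu\bigl(A \cap T^{-k}U^+(x_A, 2r_N, n)\bigr).$$
Split the $k$-sum at a threshold $k_0 = n + L$. For $k \geq k_0$, A2 gives $\mu(A \cap T^{-k}U^+) \leq \mu(A)(\mu(U^+) + \phi(L))$, and A4 together with (\ref{e.approximation}) yields $\mu(U^+(x_A, 2r_N, n)) \leq C\mu(B_{r_N}(x_A))$; after summation in $A$ and using A3 on a full-measure set, the contribution from this range is at most $CS_N(r_N^{d-\epsilon/4} + \phi(L))$. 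For $k < k_0$, the intersection $A \cap T^{-k}U^+$ lies in cylinders of length at least $n$, and Lemma~\ref{lem3.1} (which uses only $\phi(n) \to 0$, not its rate) gives a superpolynomially small total contribution once $w$ is fixed large enough.

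Choosing $L = r_N^{-\eta}$ with $\eta\beta > d$ (compatible with A2 since $\beta > 1$), both the ball-approximation error and the mixing tail decay polynomially in $r_N$, so $\mu(G_{r_N})$ is summable along $r_N = e^{-N}$. A Borel--Cantelli argument then yields $\underline{R}(x) \geq d - \epsilon$ a.e., and letting $\epsilon \to 0$ along a countable sequence finishes the lower bound. The main technical difficulty is calibrating $w$, $L$, and $\eta$ so that all error terms --- the cylinder-to-ball approximation (A1, A4), the short-return correlations (Lemma~\ref{lem3.1}), and the polynomial $\phi$-mixing tail (A2) --- collectively produce a summable series; this succeeds without invoking the sharper constraint (\ref{c.parameters}) needed only for Theorem~\ref{t.phimixing}, because here we only require an $r^{\epsilon}$ margin rather than a sharp distributional statement.
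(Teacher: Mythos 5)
Your split into upper and lower bounds is the standard decomposition; the upper bound $\underline R(x)\le d$ via Kac's lemma, Markov's inequality, and a covering by $K_N\le r_N^{-(d+\epsilon/2)}$ sets is essentially correct (and, as you note, needs little beyond ergodicity and A3). The lower bound $\underline R(x)\ge d$ is where the content of the theorem lives, and the paper proves it differently from what you attempt: it establishes Lemma~\ref{6.2}, a de-correlation estimate for the return time to a ball with a \emph{fixed} center $x$, and then invokes the covering/Borel--Cantelli machinery of Theorem 6.2 in~\cite{PS} (in turn following Lemma 16 of~\cite{RS}), with Lemma~\ref{6.2} substituted for Proposition 5.1 of~\cite{PS}. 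Fixing the center is what makes the cylinder approximation, the $\phi$-mixing error, and the restriction to a nearly full-measure set all uniformly controllable. Your partition-based global estimate of $\mu(G_{r_N})$ bypasses this structure, and one of its steps does not follow from A3 as written: the bound $\sum_A\mu(A)\,\mu(B_{r_N}(x_A))\le r_N^{d-\epsilon/4}$ requires uniform control on $\mu(B_r(x))$, whereas A3 gives only pointwise control. For each $N$ there is a set of cylinders where this bound fails, of measure tending to $0$ but not necessarily summably, while a naive Borel--Cantelli needs summability. The remedy (restrict to a set of measure $\ge 1-\eta$ on which the local dimension is uniformly close to $d$, and let $\eta\to0$ at the end) is exactly what the covering argument of~\cite{RS,PS} implements, and it changes the quantity you must prove summable.

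The more serious gap is the short-return range $k<k_0$. It is true that $A\cap T^{-k}U^+(x_A,\cdot,n)$ decomposes into cylinders of length at least $n$, each of measure $\le e^{-\upsilon n}$ by Lemma~\ref{lem3.1}, but you supply no bound on their number, and the only unconditional bound is $\mu(A\cap T^{-k}U^+)\le\mu(A)$, which sums to $1$ over $A$; summing then over $k$ up to $k_0\sim n+L$ gives a quantity that grows polynomially in $r_N^{-1}$, not something superpolynomially small. Nonemptiness of the intersection forces $A$ to be approximately $k$-periodic, and controlling the total measure of such near-periodic cylinders --- quantitatively and uniformly over all $k$ in this range --- is precisely the hard input. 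The paper's Lemma~\ref{6.2} supplies it through the error term $C'\mu(B_{r+s}(x))k^{-\beta}$, which does exploit the polynomial decay rate $\beta>1$ of $\phi$ (so your parenthetical ``only $\phi(n)\to 0$ is needed here'' is not accurate either). Without a genuine short-return estimate the lower bound does not close, and hence neither does the theorem.
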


In order to prove this theorem, we will need the following de-correlation lemma.

\begin{lemma}\label{6.2}
There exists $C'>0$ such that for every $x\in M$, $r>0$ and integers $0\le k \le N$,
$$
\mu\{ y\in B_r(x):\, \tau_{B_r(x)}(T^k(y))<N\} \le C'\mu(B_{r+s}(x))k^{-\beta} + 2(N-k)\mu(B_{r+s}(x))^2,
$$
where $s=C(k/2)^{-p}$.
\end{lemma}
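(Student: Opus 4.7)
\textbf{Proof plan for Lemma \ref{6.2}.} The central idea is to reduce the bound on the event
$\{y\in B_r(x):\,\tau_{B_r(x)}(T^k y)<N\}$ to a \emph{single} application of the $\phi$-mixing
inequality (A2), rather than a term-by-term one. Write the event as
\[
B_r(x)\cap T^{-k}A,\qquad A\;=\;\bigcup_{l\geq 1,\,(k,l)\text{ in range}} T^{-l}B_r(x),
\]
where the $l$'s range over the values for which $\tau_{B_r(x)}(T^k y)<N$ forces
$T^{k+l}y\in B_r(x)$. The union bound immediately gives
$\mu(A)\le (\#\text{terms})\,\mu(B_r(x))$ by $T$-invariance.

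Next, following the cylinder approximation scheme of Section~\ref{sec5}, set $m=\lfloor k/2\rfloor$
and $U=U^+(x,r,m)$. By A\ref{diameter} we have $B_r(x)\subset U\subset B_{r+s}(x)$ with
$s=C(k/2)^{-p}$ (up to the harmless integer-part rounding), and $U\in\cF_{0,m-1}$.
Now apply A\ref{mixing} with $\Gamma=U$ and $\Delta=A\in\cF$: writing $k=m+(k-m)$ and noting that
the mixing gap satisfies $k-m\ge k/2$,
\[
\bigl|\mu(U\cap T^{-k}A)-\mu(U)\mu(A)\bigr|\;\le\;\phi(k/2)\,\mu(U)\;\le\;C\,(k/2)^{-\beta}\,\mu(U).
\]
The bound then follows by assembling
$\mu(B_r(x)\cap T^{-k}A)\le\mu(U\cap T^{-k}A)$, $\mu(U)\le\mu(B_{r+s}(x))$, and
$\mu(A)\le 2(N-k)\,\mu(B_{r+s}(x))$ (or whatever constant times the number of $l$'s that the
chosen convention produces), with $C'=2^{\beta}C$.

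\textbf{Where the main difficulty lies.} The only delicate point is the balance in the choice
of the approximation level $m=\lfloor k/2\rfloor$: one needs a cylinder union that (i) sits
in an algebra $\cF_{0,m-1}$ with $m$ small enough that the mixing gap $k-m$ is comparable to
$k$, so that $\phi(k-m)\le C'k^{-\beta}$; and simultaneously (ii) has $m$ large enough that
$\diam\cA^m\le Cm^{-p}$ is small, so that $U^+(x,r,m)$ is sandwiched between $B_r(x)$ and
$B_{r+s}(x)$ with $s=C(k/2)^{-p}$ as prescribed. The choice $m=\lfloor k/2\rfloor$ realises
both requirements. It is also important to apply $\phi$-mixing \emph{once} to the full
union $A$, rather than term by term to each $T^{-(k+l)}B_r(x)$: the latter would produce
$\sum_l\phi(k/2+l)\,\mu(U)=O(k^{1-\beta})\mu(U)$, strictly weaker than the claimed
$O(k^{-\beta})\mu(U)$. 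Once this single-application step is set up, the rest is a union bound
and invariance of $\mu$.
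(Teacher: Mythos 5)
Your proof is correct and follows essentially the same route as the paper's: replace $B_r(x)$ on the $\Gamma$-side by the cylinder union $U^+(x,r,\lfloor k/2\rfloor)\in\cF_{0,\lfloor k/2\rfloor-1}$ sandwiched between $B_r(x)$ and $B_{r+s}(x)$, apply the $\phi$-mixing inequality once with gap $\geq k/2$, and finish with the coarse estimate $\mu(A)\le(N-k)\mu(B_{r+s}(x))$; your observation that a single application of mixing to the full union $A$ (rather than term by term) is essential matches the paper's choice of $\Delta=\{\tau_{U^+(x,r,k/2)}<N-k\}$. The one cosmetic difference is that the paper also replaces $B_r(x)$ by $U^+(x,r,k/2)$ \emph{inside} the union $A$, which keeps $\Delta$ manifestly a union of cylinder preimages and thus in $\cF$ without invoking measurability of the ball itself.
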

\begin{proof}
Recall that $B_r(x)\subset U^+(x,r,k/2)\subset B_{r+s}(x)$ with $s=C(k/2)^{-p}$. Also note that
\begin{align*}
\{y:\tau_{B_r(x)}(T^k(y))<N\} =& T^{-k}\{y:\tau_{B_r(x)}(y)<N-k\}\\\subset& T^{-k}\{y:\tau_{U^+(x,r,k/2)}(y)<N-k\}.
\end{align*}
By the $\phi$-mixing property, we get
\begin{align*}
&\mu\{ y\in B_r(x):\, \tau_{B_r(x)}(T^k(y))<N\}\\
\le& \mu(U^+(x,r,k/2)\cap T^{-k}\{y:\tau_{U^+(x,r,k/2)}(y)<N-k\} )\\
\le& \mu(U^+(x,r,k/2))\left(\mu\{ y:\,\tau_{U^+(x,r,k/2)}(y)<N-k\}+\phi(k/2)\right)\\
\le& \mu(B_{r+s}(x))\left(\mu\{ y:\,\tau_{B_{r+s}(x)}(y)<N-k\}+\phi(k/2)\right)\\
\le& C\mu(B_{r+s}(x))(k/2)^{-\beta} + (N-k)\mu(B_{r+s}(x))^2,
\end{align*}
where in the last inequality we use the coarse estimate $\mu(\tau_A<n) \le n\mu(A)$ for every measurable set $A$.

Now the proof of Theorem~\ref{6.1} follows the lines of Theorem 6.2 in~\cite{PS} and Lemma~\ref{6.2} above, which replaces Proposition 5.1 in \cite{PS}. Note that although in~\cite{PS} $T$ is assumed to be an invertible map, the proof of Theorem 6.2 in~\cite{PS} does not require that. In fact, the argument in~\cite{PS} is taken from Lemma 16 in~\cite{RS} which is stated for any measure preserving system.
\end{proof}

Next, define the hitting time exponent for the pair $(x,y)\in M\times M$ to be
$$
R_{x,y} = \min\{\underline{R}(x), \underline{R}(y),\underline{R}(x,y),\underline{R}(y,x)\}.
$$
Combining Theorem~\ref{6.1} with Proposition 1 in~\cite{GRS}, we immediately obtain the following lower bound on the hitting time exponent :
\begin{proposition}\label{6.3}
Assume that Assumptions A1--A4 hold true. Then $R_{x,y} \ge \dim_H\mu$ for $\mu\times\mu$-a.e.
$(x,y)\in M\times M$.
\end{proposition}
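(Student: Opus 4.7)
The plan is to bound each of the four quantities inside the minimum defining $R_{x,y}$ below by $d=\dim_H\mu$ for typical $(x,y)$, and then take the minimum. By Assumption A\ref{dimension} the Hausdorff dimension $\dim_H\mu$ equals $d$. The two diagonal terms $\underline{R}(x)$ and $\underline{R}(y)$ are already handled by Theorem \ref{6.1}, which gives $\underline{R}(x)=d$ for $\mu$-almost every $x$; by Fubini this also gives $\underline{R}(y)=d$ for $\mu\times\mu$-a.e.\ $(x,y)$. Thus the real work is to prove the off-diagonal bounds $\underline{R}(x,y)\ge d$ and $\underline{R}(y,x)\ge d$ for $\mu\times\mu$-a.e.\ $(x,y)$, which is exactly the content of Proposition~1 in \cite{GRS} that the authors cite.

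I would prove this via a Borel--Cantelli argument combined with $T$-invariance. Fix $\varepsilon>0$ and choose a geometric sequence $r_k=2^{-k}$. Assumption A\ref{dimension} gives, for $\mu$-a.e.\ $y$, an integer $K(y)$ such that $\mu(B_{r_k}(y))\le r_k^{d-\varepsilon/2}$ for all $k\ge K(y)$. For such a $y$, the $T$-invariance of $\mu$ yields the coarse Markov-type bound
\[
\mu\{x:\tau_{B_{r_k}(y)}(x)\le r_k^{-(d-\varepsilon)}\}
\;\le\;\sum_{j=1}^{\lfloor r_k^{-(d-\varepsilon)}\rfloor}\mu(T^{-j}B_{r_k}(y))
\;\le\; r_k^{-(d-\varepsilon)}\,\mu(B_{r_k}(y))\;\le\; r_k^{\varepsilon/2},
\]
which is summable in $k$. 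The Borel--Cantelli lemma applied in the $x$-variable (with $y$ fixed) gives $\tau_{B_{r_k}(y)}(x)>r_k^{-(d-\varepsilon)}$ for all $k$ sufficiently large, for $\mu$-a.e.\ $x$. A monotonicity argument interpolating between the dyadic radii $r_k$ then yields $\underline{R}(x,y)\ge d-\varepsilon$. Letting $\varepsilon$ run through a countable sequence tending to $0$, and applying Fubini to exchange the order of the ``$\mu$-a.e.\ $y$'' and ``$\mu$-a.e.\ $x$'' quantifiers, gives $\underline{R}(x,y)\ge d$ for $\mu\times\mu$-a.e.\ $(x,y)$. The same argument with the roles of $x$ and $y$ swapped yields $\underline{R}(y,x)\ge d$ on a full-measure set.

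Combining all four lower bounds and taking the minimum on the intersection of the four full-measure sets produces $R_{x,y}\ge d=\dim_H\mu$ for $\mu\times\mu$-a.e.\ $(x,y)$, which is the statement of the proposition. I do not foresee a real obstacle: the only ingredients are the dimension identity of Theorem \ref{6.1}, the a.e.\ pointwise dimension from A\ref{dimension}, and the elementary Markov-type bound $\mu(\tau_A\le N)\le N\mu(A)$, which requires only $T$-invariance and no mixing. In particular, the polynomial $\phi$-mixing rate of A\ref{mixing} and the partition-diameter and regularity assumptions A\ref{diameter}, A\ref{regularity} are not needed at this step (they enter only through the proof of Theorem \ref{6.1}).
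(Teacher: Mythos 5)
Your proof is correct, and it is essentially a self-contained expansion of what the paper handles by citation. The paper's proof of Proposition~\ref{6.3} is one line: Theorem~\ref{6.1} gives the diagonal exponents $\underline{R}(x)$ and $\underline{R}(y)$, and Proposition~1 of~\cite{GRS} is invoked for the off-diagonal exponents $\underline{R}(x,y)$ and $\underline{R}(y,x)$. Your Borel--Cantelli argument along the dyadic radii $r_k=2^{-k}$ reproduces the substance of that cited proposition: Assumption~A\ref{dimension} supplies the bound $\mu(B_{r_k}(y))\le r_k^{d-\varepsilon/2}$ for $k$ large and a.e.\ $y$, the elementary inequality $\mu\{\tau_A\le N\}\le N\mu(A)$ (only $T$-invariance needed) makes the bad-set probabilities $\mu\{x:\tau_{B_{r_k}(y)}(x)\le r_k^{-(d-\varepsilon)}\}\le r_k^{\varepsilon/2}$ summable, and monotone interpolation between $r_{k+1}$ and $r_k$ together with Fubini and a countable $\varepsilon\downarrow 0$ closes the argument. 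Your remark that neither A\ref{diameter} nor A\ref{mixing} nor A\ref{regularity} is used for the off-diagonal inequality — they enter only through Theorem~\ref{6.1}, which controls the diagonal terms — is a correct and worthwhile clarification that the paper's bare citation leaves implicit. In short, you reach the same conclusion via the same underlying mechanism, with the external ingredient unpacked rather than quoted.
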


When trying to apply the arguments from Section~\ref{sec4} to geometric balls, the major difficulty arises in
obtaining an appropriate estimate for short returns, namely for $b_2$ in~(\ref{b2}). On one hand, we need large
 $n$ for the approximation of metric balls by $n$-cylinder sets, as observed in Section~\ref{sec5}, and on the other hand, if we put
$$
\pi(B_r(x),B_r(y)) = \min\{k \ge 0: T^{-k}B_r(x)\cap B_r(y)\ne\emptyset \mbox{ or } T^{-k}B_r(y)\cap B_r(x)\}
$$
and define $\ka_{B_r(x),B_r(y)}$ as in the case of cylinders,
then it is easy to see that $\ka_{B_r(x),B_r(y)}$ is of order $|\log r|$, which is much smaller than $n(r)= \floor{r^{-\frac{w}{p}}}+1$. In particular, we cannot expect to have sufficient de-correlation before the first return happens. To solve this issue, we make the following crucial observation concerning the $b_2$ term in
the estimates from \cite{AGG}.
\begin{remark}\label{6.4}
The $b_2$ term in (\ref{b2}) was used in Theorem 3 of \cite{AGG} to control terms of the form
$$
\sum_{\alpha\in I}E(X_\alpha[f_i({\bf W_\alpha+\bf e_i}) - {f_i}({\bf V_\alpha+e_i})]),
$$
where ${\bf W_\alpha}$ and ${\bf V_\alpha}$ are certain random $d$-vectors which are different only if
$\sum_{\alpha\ne\be\in B_\alpha}X_\beta\geq 1$, $X_\al$ with $\al\in I$ are Bernoulli random variables,
$\al\in B_\al
\subset I,\,\forall\al\in I$ and $\bf e_i$ is the unit vector with 1 on the $i$th place. Then we can write
\begin{eqnarray*}
&\sum_{\alpha\in I}E(X_\alpha[f_i({\bf W_\alpha+\bf e_i}) - {f_i}({\bf V_\alpha+e_i})])
\le 2\|f_i\| \sum_{\alpha\in I}E(X_\alpha\bbI_{\bf W_\alpha\ne\bf V_\alpha})\\
&\le 2\|f_i\| \sum_{\alpha\in I}P(X_\alpha=1,\,\sum_{\alpha\ne\be\in B_\alpha}X_\beta\geq 1).
\end{eqnarray*}
The term $b_2$ in \cite{AGG} is obtained by estimating
\[
P(X_\al=1,\,\sum_{\alpha\ne\be\in B_\alpha}X_\beta\geq 1)\leq\sum_{\alpha\ne\be\in B_\alpha}P(X_\al=1,\, X_\be=1)
\]
which is too coarse for our purposes. Thus it is possible to replace $b_2$ by
\begin{equation}\label{b'2}
b'_2 = \sum_{\alpha\in I}P(X_\alpha=1,\,\sum_{\alpha\ne\be\in B_\alpha} X_\beta\geq 1)
\end{equation}
 and (\ref{a5}) will still hold with $b'_2$ in place of $b_2$. It turns out, that we will be able to estimate
 $b'_2$ in a better way sufficient for our proof.
\end{remark}

Let
$$
X_{k,0}=\bbI_{B_r(y)}\circ T^k\text{ and }X_{k,1}=\bbI_{B_r(x)}\circ T^k,
$$
and for fixed $k\in\mathbb{N}, \al = 0,1$ and $M,R>0$ let
$$
B^{M,R}_{k,\al}=\{(l,\al),(l,1-\al):\, 0\leq l\leq M,\, |l-k|\leq R\}
$$
be the same as in Section~\ref{sec4}, with $U=B_r(x)$ and $V=B_r(y)$.
 The following proposition gives an (optimal) estimate on $b'_2$, which will be used in the next section.

\begin{proposition}\label{6.5}
Assume that Assumptions A1--A4 hold true. Then for every $\al=0,1$, positive integers $M$ and $k$, positive
real number $\sigma<\dim_H\mu$ and $\mu\times\mu$-a.e. $(x,y)$ such that the ratio $\frac {\mu(B_r(x))}{\mu(B_r(y))}$
remains sandwiched between two positive constants as $r\to 0$,
$$
\mu\{z:\, X_{k,\alpha}(z)=1,\,\sum_{(k,\alpha)\ne(l,\beta)\in B^{M,r^{-\sigma}}_{k,\al}}X_{l,\beta}\geq 1\}\leq \up_{x,y}(\mu(B_r(x)))
$$
where $\up(u)=\up_{x,y}(u)=o(u)$ as $u\to 0$. In particular, if $M=M(r)\to\infty$ is such that $M(r)\up_{x,y}(\mu(B_r(x)))\to
0$ as $r\to 0$ then
\begin{eqnarray*}
&b'_2=b'_2(x,y)=\sum_{(k,\al)\in I_M}\mu\{ z:\, X_{k,\alpha}(z)=1,\,\sum_{(k,\alpha)\ne(l,\beta)
\in B^{M,r^{-\sigma}}_{k,\al}}X_{l,\beta}\geq 1\}\\
&\le M(r)\up_{x,y}(\mu(B_r(x)))\to 0, \,\mbox{ as } r \to 0.
\end{eqnarray*}
\end{proposition}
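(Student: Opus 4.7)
My plan is to bound $\mu(E)$ by a sum of pair--correlations via $T$--invariance and a union bound, and then to estimate this sum by splitting the lag range at an intermediate scale: I handle the long lags via $\phi$--mixing against the cylinder covers of Section~\ref{sec5}, the intermediate lags via the de--correlation Lemma~\ref{6.2}, and the short lags via the hitting--time exponent from Proposition~\ref{6.3}.

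\medskip

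Writing $A_0=B_r(y)$, $A_1=B_r(x)$, the event $\{X_{k,\alpha}=1,\,\sum_{(l,\beta)\ne(k,\alpha)}X_{l,\beta}\ge 1\}$ is contained in the union, over admissible $(l,\beta)$, of $T^{-k}A_\alpha\cap T^{-l}A_\beta$. A union bound together with $T$--invariance (shifting the later of $k,l$ to $0$) reduces matters to showing that
\[
\sum_{j=1}^{\lfloor r^{-\sigma}\rfloor}\mu(C\cap T^{-j}D)=o(\mu(B_r(x)))\qquad\text{for every }C,D\in\{A_0,A_1\},
\]
where I also use the hypothesis $\mu(B_r(x))\asymp\mu(B_r(y))$.

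\medskip

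Next I approximate $C,D$ by their $n(r)$--cylinder covers $U_C^{+},U_D^{+}$ from Section~\ref{sec5}, whose measure error is $\lesssim r^{wa-b}\mu(B_r(x))$ by~\eqref{e.approximation} and A\ref{regularity}. I split the lag sum at $k^{*}\asymp r^{-\tau}$ with $1/p<\tau<\sigma$. For $j>k^{*}$, $\phi$--mixing gives $\mu(C\cap T^{-j}D)\le\mu(U_C^{+})(\mu(U_D^{+})+\phi(j-n(r)))$; summing bounds this tail by $r^{-\sigma}\mu(U_C^{+})\mu(U_D^{+})+\mu(U_C^{+})\sum_{m\ge k^{*}-n(r)}\phi(m)$, which is $o(\mu(B_r(x)))$ by $\sigma<\dim_H\mu$ (using A\ref{dimension}) and summability of $\phi$ (A\ref{mixing}, $\beta>1$). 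For $j$ in the intermediate window, where $(j/2)^{-p}\ll r$, I apply a mixed--target version of Lemma~\ref{6.2}: its bound, combined with A\ref{regularity} to replace $B_{r+s}$ by $B_r$ up to a factor $1+O(r^{wa-b})$, is again $o(\mu(B_r(x)))$ thanks to $\sigma<\dim_H\mu$.

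\medskip

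The main obstacle is the very short range $j\le r^{-1/p+\varepsilon}$, where $s=C(j/2)^{-p}$ is comparable to $r$ and Lemma~\ref{6.2} no longer helps. Here I invoke Proposition~\ref{6.3}: for $\mu\times\mu$--a.e.\ $(x,y)$ and any $\sigma<\sigma'<\dim_H\mu$, each of $\tau_{B_r(x)}(x),\tau_{B_r(y)}(y),\tau_{B_r(y)}(x),\tau_{B_r(x)}(y)$ exceeds $r^{-\sigma'}\gg r^{-1/p+\varepsilon}$ once $r$ is small. Together with the inclusion $U^{+}\subset B_{r+r^{w}}$ from Section~\ref{sec5}, this forces $\sum_{j\le r^{-1/p+\varepsilon}}\mu(C\cap T^{-j}D)$ to be $o(\mu(B_r(x)))$ for all sufficiently small $r$. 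The parameter condition~\eqref{c.parameters}, $p>(d+b)/(ad)$, is precisely what permits a simultaneous choice of $w$ and $k^{*}$ making these three estimates simultaneously $o(\mu(B_r(x)))$.
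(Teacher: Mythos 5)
Your proof takes a genuinely different route from the paper's, and unfortunately the route has a gap that cannot be repaired in the form you propose.

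The first issue is structural. Your opening step---replacing the event $\{X_{k,\alpha}=1,\,\sum_{(l,\beta)\ne(k,\alpha)}X_{l,\beta}\ge 1\}$ by a union bound and thereby reducing to $\sum_j\mu(C\cap T^{-j}D)$---is precisely the coarse estimate $b_2$ that Remark~\ref{6.4} rejects. The whole reason the paper introduces $b_2'$ in place of $b_2$ is that the union bound over-counts: a point $z$ that returns at several times $j$ contributes once to the event but once per $j$ to the sum. For long and intermediate lags this over-counting is absorbed by $\phi$-mixing (your estimates there are in the right spirit and echo Lemma~\ref{6.2}), but for short lags $j\lesssim r^{-1/p+\varepsilon}$ the number of terms is polynomially large in $r^{-1}$ while each term is only bounded by $\varepsilon_2\mu(B_r(x))$, so the sum is \emph{not} $o(\mu(B_r(x)))$. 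The union bound therefore cannot yield the proposition.

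The second issue is that your proposed fix for the short-lag regime is incorrect. Proposition~\ref{6.3} states that $\tau_{B_r(x)}(x),\tau_{B_r(y)}(x)$, etc., exceed $r^{-\sigma'}$ for small $r$; this is a statement about the orbit of the single center point $x$. It does \emph{not} imply that $\mu(B_r(x)\cap T^{-j}B_r(y))$ is zero or small for $j<r^{-\sigma'}$: other points $z\in B_r(x)$ can return even when the center does not, and since $T$ may be expanding, the orbit of $z$ need not shadow the orbit of $x$. The paper bridges exactly this gap with the Lebesgue-density-point argument, which does not appear in your proposal: it defines the good set $G_{\sigma,r_1}$ (resp.\ $G^y_{\sigma,r_1}$) of points with slow quantitative recurrence, shows it has nearly full measure via Theorem~\ref{6.1} (resp.\ via $\underline{R}(x,y)\ge\dim_H\mu$), restricts $x$ to density points of it, and then bounds the \emph{union} event directly as $b^+_k(x)=\mu\{z\in B_r(x):\tau_{B_r(x)}(z)<r^{-\sigma}\}\le\mu(B_r(x)\setminus G_{\sigma,r_1})\le\varepsilon_2\mu(B_r(x))$, with no sum over $j$ at all. (The paper also includes the preliminary reduction, via a measure-preservation argument, showing $b^-\le b^+$ and $c^-\le c^+$, so that one only needs to handle forward returns.) To repair your proposal, the short-lag block must be treated as an OR-event, not term-by-term, and the density argument is what makes that possible.
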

\begin{proof}
We will only consider the case $\al=1$. The case $\al=0$ is similar by switching $x$ and $y$. Note that $\rho(x,y,r)\to \rho(x,y)\in(0,1)$ as $r\to 0$ allows us to interchange between $\up(\mu(B_r(x)))$ and $\up(\mu(B_r(y)))$.

\noindent {\em Case 1.} $\al=\be=1$.

First note that when $\al=\be=1$,
\begin{eqnarray*}
&\mu\{ z:\, X_{k,1}(z)=1,\,\sum_{(k,1)\ne(l,1)\in B^{M,r^{-\sigma}}_{k,1}}X_{l,1}\geq 1\}\\
&=\mu\big( (T^{-k}B_r(x))\cap(\bigcup_{k+r^{-\sig}\geq l\geq 0\vee(k-r^{-\sig})}T^{-l}B_r(x)\big).
\end{eqnarray*}
We split this into two parts that correspond to $l<k$ and $l>k$, respectively,
\begin{equation*}
b^-=b^-_k(x)=\mu\left(T^{-k}B_r(x)\cap\bigcup_{l=1}^{k\wedge r^{-\sigma}}T^{-(k-l)}B_r(x)\right),
\end{equation*}
and
$$
b^+=b^+_k(x)=\mu\left(T^{-k}B_r(x)\cap\bigcup_{l=1}^{r^{-\sigma}}T^{-(k+l)}B_r(x)\right).
$$
\noindent(i) First, we will prove that $b^-\leq b^+$ and then it will remain only to estimate $b^+$.
It suffices to show that
$$
\mu\left(T^{-k}B_r(x)\cap\bigcup_{l=1}^{k\wedge r^{-\sigma}}T^{-(k-l)}B_r(x)\right)=\mu\left(T^{-k}B_r(x)\cap\bigcup_{l=1}^{k\wedge r^{-\sigma}}T^{-(k+l)}B_r(x)\right).
$$
For this purpose, we write $J_l = T^{-k}B_r\cap T^{-(k-l)}B_r$ and $U = \bigcup_{l=1}^{k\wedge r^{-\sigma}}J_l$. Similarly
$\tilde{J}_l = T^{-n}B_r(x)\cap T^{-(k+l)}B_r(x)$ and $\tilde{U} = \bigcup_{l=1}^{k\wedge
r^{-\sigma}}\tilde{J}_l$. In order to show that $\mu(U) = \mu(\tilde{U})$, we decompose $U$ into a
disjoint union
$$
U = \bigcup_{l=1}^{k\wedge r^{-\sigma}} J'_l\quad\mbox{where}\quad J'_l = J_l\setminus \bigcup_{j=1}^{l-1}
J_l\cap J_j.
$$
Similarly, $\tilde{J}'_l=\tilde{J}_l\setminus \bigcup_{j=1}^{l-1}\tilde{J}'_l\cap \tilde{J}_j$ decomposes $\tilde{U}$ into a disjoint union.

Note that $T^{-l}J_l=\tilde{J}_l$ when $l\leq k$, and so
$$
T^{-l}J'_l = T^{-l}J_l\setminus \bigcup_{j=1}^{l-1}T^{-l}(J_l\cap J_k)=\tilde{J}_l\setminus \bigcup_{j=1}^{l-1}(\tilde{J}_l\cap \tilde{J}_k)=\tilde{J}'_l.
$$
This shows that $\mu(J'_l)=\mu(\tilde{J}'_l)$ when $l\leq k$. Summing over $l$, we get $\mu(U)=\mu(\tilde{U})$.

\noindent(ii) Next, we estimate $b^+$ using the Lebesgue density point technique employed in
Proposition 7.1 from~\cite{PS}.
Fix $\varepsilon_1,\varepsilon_2>0$. Consider the "good" set:
$$
G_{\sigma, r_1} = \{z\in\bfM: \forall r < r_1, \tau_{B_{2r}(z)}(z)\ge r^{-\sigma}\}.
$$
By Theorem~\ref{6.1}, $G_{\sig,r_1}\uparrow G_\sig$ as $r_1\downarrow 0$ where $\mu(\bfM\setminus G_\sig)=0$
and, in particular, we can take $r_1$ small enough such that $\mu(\bfM\setminus G_{\sigma, r_1})<\varepsilon_1$.

Let $\tilde{G}_{\sigma,r_1}$ be the set of Lebesgue density points of $G_{\sigma,r_1}$ with respect to the measure $\mu$, that is, the set of points $z\in G_{\sigma,r_1}$ such that
$$
\lim_{r\to 0}\frac{\mu(B_r(z)\cap G_{\sigma,r_1})}{\mu(B_r(z))}=1.
$$
$\tilde{G}_{\sigma,r_1}$ has full measure in $G_{\sigma,r_1}$. Then we take $r_2 < r_1$, such that
$$
\tilde{G}_{\sigma,r_1,r_2} := \left\{z:\forall r < r_2,\frac{\mu(B_r(z)\cap G_{\sigma,r_1})}{\mu(B_r(z))}>1-\varepsilon_2\right\}
$$
satisfies
$$
\mu(G_{\sigma,r_1}\setminus\tilde{G}_{\sigma,r_1,r_2})<\varepsilon_1.
$$
Now, for $x\in\tilde{G}_{\sigma,r_1,r_2}$ and $r<r_2$,
\begin{eqnarray*}
&b^+=b^+_k(x) =\mu(z\in T^{-k}B_r(x): \tau_{B_r(x)}(T^kz)<r^{-\sigma})\\
&=\mu(z\in B_r(x): \tau_{B_r(x)}(z)<r^{-\sigma})\\
&\le\mu(z\in B_r(x): \tau_{B_{2r}(z)}(z)<r^{-\sigma})\le\mu(B_r(x)\setminus G_{\sigma,r_1})
\le\varepsilon_2\mu(B_r(x)),
\end{eqnarray*}
which fails only on the set $\bfM\setminus\tilde G_{\sig,r_1,r_2}$ with measure less than
$2\varepsilon_1$ and which decreases as $r_1,r_2\downarrow 0$ to a set of $\mu$-measure 0.

\noindent {\em Case 2.} $\al=1, \be=0$.

First note that when $l=k$ then $T^{-k}B_r(x)\cap T^{-l}B_r(y)\ne\emptyset$ if and only if $B_r(x)\cap B_r(y)\ne\emptyset$, which can be avoided if $r$ is taken small enough. As in the previous case, we have  two sub-cases: $l<k$ and $l>k$. Denote by
$$
c^-(x,y)=\mu\left(T^{-k}B_r(x)\cap\bigcup_{l=1}^{k\wedge r^{-\sig}}T^{-(k-l)}B_r(y)\right),
$$
and
$$
c^+(x,y)=\mu\left(T^{-k}B_r(x)\cap\bigcup_{l=1}^{r^{-\sig}}T^{-(k+l)}B_r(y)\right).
$$
 By an argument similar to the $b^-$ case (with one of the $x$ replaced by $y$) we see
  that $c^-(x,y)\leq c^+(y,x)$, where the latter can be seen as part of the case $\al=0,\be =1$ and $l>k$. Hence,
  as before, we only need to estimate  $c^+(x,y)$ and the proposition follows.

By Proposition~\ref{6.3} and the Fubini theorem, for almost every $y\in\bfM$ we have $\underline{R}(x,y)\ge \dim_H\mu$ for almost every $x$. For such fixed $y$, Consider the "good" set
$$
G^y_{\sigma, r_1} = \{z\in\bfM: \forall r < r_1,\, \tau_{B_{r}(y)}(z)\ge r^{-\sigma}\}.
$$
Again, $G^y_{\sig,r_1}\uparrow G^y_\sig$ as $r_1\downarrow 0$, and so
we can take $r_1$ small enough so that $\mu(\bfM\setminus G^y_{\sigma, r_1})<\varepsilon_1$.

The rest of the proof proceeds in the same way as in the case $b^+$. We take  $\tilde{G}^y_{\sigma,r_1}$ to be
the set of Lebesgue density point of $G^y_{\sigma,r_1}$,
$$
\tilde{G}^y_{\sigma,r_1}=\left\{ z:\, \lim_{r\to 0}\frac{\mu(B_r(z)\cap G^y_{\sigma,r_1})}{\mu(B_r(z))}
=1\right\}.
$$
Then we take $r_2 < r_1$ so that the set
$$
\tilde{G}^y_{\sigma,r_1,r_2} := \left\{z:\forall r < r_2,\frac{\mu(B_r(z)\cap G^y_{\sigma,r_1})}{\mu(B_r(z))}>1-\varepsilon_2\right\}
$$
satisfies
$$
\mu(G^y_{\sigma,r_1}\setminus\tilde{G}^y_{\sigma,r_1,r_2})<\varepsilon_1.
$$
Now, for $x\in\tilde{G}^y_{\sigma,r_1,r_2}$ and $r<r_2$,
\begin{eqnarray*}
&c^+=c^+(x,y) =\mu(z\in T^{-n}B_r(x): \tau_{B_r(y)}(T^nz)<r^{-\sigma})\\
&= \mu(z\in B_r(x): \tau_{B_r(y)}(z)<r^{-\sigma})\le\mu(B_r(x)\setminus G^y_{\sigma,r_1})
\le\varepsilon_2\mu(B_r(x))
\end{eqnarray*}
concluding the argument as in the previous case and completing the proof of Proposition~\ref{6.5}.
\end{proof}

Recall that the sets $U^\pm(x,r,n(r))$ are defined in Section~\ref{sec5} as the approximation of $B_r(x)$ by $n(r)$-cylinders from inside and outside, respectively. Since $U^-(x,r,n(r))\subset B_r(x)$ and $U^-(y,r,n(r))\subset B_r(y)$, it follows that
$$
\tilde{X}_{k,0}=\bbI_{U^-(y,r,n(r))}\circ T^k\leq X_{k,0}\text{ and }\tilde{X}_{k,1}=\bbI_{U^-(x,r,n(r))}
\circ T^k\leq X_{k,1}.
$$
Hence,
$$
\{\sum_{(l,\beta)\in B^{M,r^{-\sigma}}_{n,\al}}\tilde{X}_{l,\beta}\geq 1\} \subset \{\sum_{(l,\beta)\in B^{M,r^{-\sigma}}_{n,\al}}X_{l,\beta}\geq 1\},
$$
and so
\begin{equation}\label{tb'2}
\tilde b'_2=\tilde b'_2(x,y)=\sum_{(k,\al)\in I_M}\mu\{ z:\,\tilde X_{k,\alpha}(z)=1,\sum_ {(k,\alpha)\ne(l,\be)\in B^{M,r^{-\sigma}}_{n,\al}}\tilde{X}_{l,\beta}(z)\geq 1\}\leq b'_2.
\end{equation}
This together with Proposition~\ref{6.5} yields
\begin{proposition}\label{6.6}
Assume that Assumptions A1--A4 hold true, and $\rho(x,y,r)\to \rho(x,y)\in(0,1)$ as $r\to 0$. Then for every
$\al=0,1$, a positive integers $M$, positive real number $\sigma<\dim_H\mu$ and $\mu\times\mu$-a.e. $(x,y)$,
\[
\tilde{b}'_2\leq M\up(\mu(B_r(x))
\]
where $\up(u)=o(u)$ as $u\to 0$.
\end{proposition}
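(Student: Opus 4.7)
The plan is to deduce Proposition~\ref{6.6} directly from Proposition~\ref{6.5} together with the monotonicity inequality (\ref{tb'2}) established immediately above it. The crucial observation is that, because $U^-(x,r,n(r))\subset B_r(x)$ and $U^-(y,r,n(r))\subset B_r(y)$, the truncated indicators satisfy $\tilde X_{k,\al}\le X_{k,\al}$ pointwise, so every event whose measure enters a summand of $\tilde b'_2$ is contained in the corresponding event for $b'_2$. In particular, no additional short-return analysis is needed beyond what was done to establish Proposition~\ref{6.5}.

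First, I would observe that the hypothesis $\rho(x,y,r)\to\rho(x,y)\in(0,1)$ of Proposition~\ref{6.6} is precisely what is required to invoke Proposition~\ref{6.5}, since it forces the ratio $\mu(B_r(x))/\mu(B_r(y))$ to stay sandwiched between two positive constants for all sufficiently small $r$. Proposition~\ref{6.5} then furnishes the per-term bound
\[
\mu\{z:\, X_{k,\alpha}(z)=1,\,\sum_{(k,\alpha)\ne(l,\beta)\in B^{M,r^{-\sigma}}_{k,\al}}X_{l,\beta}\ge 1\}\le \upsilon_{x,y}(\mu(B_r(x))),
\]
valid uniformly in $(k,\al)\in I_M$ for $\mu\times\mu$-a.e.\ $(x,y)$, with $\upsilon_{x,y}(u)=o(u)$ as $u\to 0$.

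Combining this uniform per-term estimate with (\ref{tb'2}) and summing over the $2(M+1)$ elements of $I_M$ would give
\[
\tilde b'_2\le b'_2\le 2(M+1)\,\upsilon_{x,y}(\mu(B_r(x))),
\]
and absorbing the harmless constant factor $2(M+1)/M=O(1)$ into $\upsilon$ produces the claimed bound $\tilde b'_2\le M\,\upsilon(\mu(B_r(x)))$ with $\upsilon(u)=o(u)$ as $u\to 0$.

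Since the substantive analytic work, namely the Lebesgue-density argument, the short-return estimate based on Theorem~\ref{6.1}, and the Fubini reduction handling the mixed case $\al\ne\be$, has already been carried out in the proof of Proposition~\ref{6.5}, there is no genuine obstacle here. The sole role of Proposition~\ref{6.6} is to transfer the short-return bound from the balls $B_r(x),B_r(y)$ to their $n(r)$-cylinder inner approximations $U^-(x,r,n(r))$ and $U^-(y,r,n(r))$, and this transfer is free because inner approximation only shrinks the relevant indicator functions.
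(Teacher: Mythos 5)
Your proof is correct and follows exactly the route the paper takes: you combine the pointwise domination $\tilde X_{k,\al}\le X_{k,\al}$ (yielding $\tilde b'_2\le b'_2$ as in (\ref{tb'2})) with the bound from Proposition~\ref{6.5}, after noting that $\rho(x,y,r)\to\rho(x,y)\in(0,1)$ guarantees the sandwiched-ratio hypothesis of that proposition. The only cosmetic difference is your explicit counting of the $2(M+1)$ elements of $I_M$ before absorbing the constant into $\upsilon$, which the paper elides.
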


\section{Proof of Theorems \ref{t.phimixing}}\label{sec7}\setcounter{equation}{0}

Recall that $n(r) = \floor{r^{-\frac{w}{p}}}+1$ where $w>1$ will be determined later.
We will write $\tau = \tau_{B_r(y)}$, $\tau_M = \min(\tau,M)$, $\tilde{\tau} = \tau_{U^-(y,r,n(r))}$ and  $\tilde{\tau}_M = \min(\tilde{\tau},M)$. Then, $\tilde{\tau}\ge \tau$ and $\tilde{\tau}_M\ge \tau_M$.
Introduce the sums
$$
S_M(z) = \sum_{j=0}^{M}X_{k,1}(z)\quad\mbox{and}\quad\tilde{S}_M(z) =\sum_{j=0}^M\tilde X_{k,1}(z)
$$
which count the number of visits to $B_r(x)$ and to $U^-(x,r,n(r))$, respectively. Set also
$$
\tilde{\rho} = \frac{\mu(U^-(y,r,n(r)))}{\mu(U^-(x,r,n(r)))+\mu(U^-(y,r,n(r)))}.
$$
In other words, every term with tilde is defined using the approximations $U^-(x,r,n(r))$ and $U^-(y,r,n(r))$.
Similarly to Section \ref{sec4} we introduce also a sequence of independent Bernoulli random variables
$\{ Y_{k,\al}:\, k\geq 0,\,\al=0,1\}$ such that $Y_{k,\al}$ has the same distribution as $\tilde X_{k,\al}$
and we set
\[
\tilde S^*_M=\sum_{k=0}^{M-1}Y_{k,1},\,\tilde\tau^*=\min\{ k\geq 0:\, Y_{k,0}=1\}\,\,\,\mbox{and}\,\,\,
\tilde\tau^*_M=\min(\tau,M).
\]
As in (\ref{a0}) we estimate
\begin{equation*}
d_{TV}(\cL(\Sxy),Geo(\rho(x,y,r)))\le B_1+B_2+B_3+B_4+B_5,
\end{equation*}
where
\begin{eqnarray*}
&B_1= d_{TV}(\cL(S_\tau),\cL(S_{\tilde{\tau}_M})),\, B_2= d_{TV}(\cL(S_{\tilde{\tau}_M}),\cL(\tilde{S}_{\tilde{\tau}_M})),\\
&B_3= d_{TV}(\cL(\tilde{S}_{\tilde{\tau}_M}),\cL(\tilde{S}^*_{\tilde{\tau}^*_M})),\,
B_4= d_{TV}(\cL(\tilde{S}^*_{\tilde{\tau}^*_M}),\cL(\tilde{S}^*_{\tilde{\tau}^*})),\\
&B_5= d_{TV}(Geo(\tilde{\rho}),Geo(\rho)).\\
\end{eqnarray*}
 Note that in $B_1$ we use $\tilde{\tau}_M$ and not $\tau_M$ as in (\ref{a0}) but this will not make
  essential difference.

To obtain an estimate on $B_1$, we write
$$
B_1\le d_{TV}(\cL(S_\tau),\cL(S_{{\tau}_M}))+d_{TV}(\cL(S_{\tau_M}),\cL(S_{\tilde{\tau}_M})).
$$
The first term is similar to $A_1$ from (\ref{a0}) and is bounded by $\mu\{ z:\,\tau(z)>M\}$. For the second term, note that $\tau_M\le\tilde{\tau}_M\le M$. If $\tau_M\ne \tilde{\tau}_M$, then the first entry to $B_r(y)$ must be contained in $B_r(y)\setminus U^-(y,r,n(r))$. Therefore
\begin{eqnarray*}
&d_{TV}(\cL(S_{\tau_M}),\cL(S_{\tilde{\tau}_M})) \le\mu\{ z:\,\tau_{B_r(y)\setminus U^-(y,r,n(r))}(z)<M\}\\
&\le \mu\left(\bigcup_{j=0}^{M-1}T^{-j}(B_r(y)\setminus U^-(y,r,n(r)))\right)\\
&\le M\mu(B_r(y)\setminus U^-(y,r,n(r)))\le Mr^{wa-b}\mu(B_r(y)),
\end{eqnarray*}
where the last inequality follows from (\ref{e.approximation}).

Next, $B_5$ is the same as $A_4$ in Section~\ref{sec4}, and so
$$
B_5 \le \frac{2|\tilde{\rho}-\rho|}{\tilde{\rho}\rho}.
$$

To estimate $B_2$, note that $U^-(x,r,n(r))\subset B_r(x)$ which implies that $\tilde{S}_M\le S_M$. Therefore
\begin{eqnarray*}
&B_2\le \sum_{k=0}^M|\mu\{ z:\, S_{\tilde{\tau}_M}(z) = k\} - \mu\{ z:\,\tilde{S}_{\tilde{\tau}_M}(z) = k\}|\\
&\le \sum_{k=0}^M\mu\{ z:\, S_{\tilde{\tau}_M}(z) > k, \tilde{S}_{\tilde{\tau}_M}(z) = k\}.
\end{eqnarray*}
Since $U^-(x,r,n(r))\subset B_r(x)$, all the extra visits $T^jz\in B_r(x)$, which make $S_{\tilde{\tau}_M}$
larger than $\tilde S_{\tilde{\tau}_M}$, must be contained in $ B_r(x)\setminus U^-(x,r,n(r))$. Thus,
by Assumption A\ref{regularity},
\begin{equation}
B_2\le\sum_{k=0}^M(M-k)\mu(B_r(x)\setminus U^-(x,r,n(r)))\le \frac12 M^2r^{wa-b}\mu(B_r(x)).
\end{equation}

Observe that $B_3$ and $B_4$ are the same as $A_2$ and $A_3$ in Section~\ref{sec4} but we
estimate $B_3$ by (\ref{a4}) and (\ref{a5})
with $b_2$ replaced by $\tilde{b}'_2$ in view of Remark~\ref{6.4}. Hence,
$$
B_4 \le \mu\{ z:\,\tilde{\tau}^*(z) >M\} = (1-\mu(U^-(y,r,n(r))))^{M+1},
$$
and setting $p_x = \mu(U^-(x,r,n(r)))$ and $p_y = \mu(U^-(y,r,n(r)))$ we obtain using
Proposition \ref{6.6} that
\[
B_3\le 8MR(p_x+p_y)^2 + 4M\up(\mu(B_r(x)))+12M\phi(R-n) +4M(p_x^2+p_y^2).
\]
Combining the above estimates we obtain
\begin{eqnarray}\label{ineq1}
&d_{TV}(\cL(\Sxy),Geo(\rho))\le\mu\{ z:\,\tau(z)>M\}+ Mr^{wa-b}\mu(B_r(y))\\
&+\frac 12 M^2r^{wa-b}\mu(B_r(x))+4MR(p_x+p_y)^2+ 2M\up(\mu(B_r(x)))+6M\phi(R-n(r))\nonumber\\ &+2M(p_x^2+p_y^2)+(1-\mu(U^-(y,r,n(r))))^{M+1}+\frac{2|\tilde{\rho}-\rho|}{\tilde{\rho}\rho}.\nonumber
\end{eqnarray}

Next, observe that $\tilde\tau\geq\tau$, and so $\{ z:\,\tilde\tau(z)>M\}\supset\{ z:\,\tau(z)>M\}$.
On the other hand,
\begin{eqnarray*}
&\{ z:\,\tilde\tau(z)>M\}\setminus\{ z:\,\tau(z)>M\}=\{ z:\,\tilde\tau(z)>M,\,\tau(z)\leq M\}\\
&\subset\cup_{k=0}^M\{ z:\, T^kz\in B_r(y)\setminus U^-(y,r,n(r))\}.
\end{eqnarray*}
Hence, by Assumption A\ref{regularity},
\begin{eqnarray}\label{ineq2}
&\mu\{ z:\,\tau(z)>M\}\leq\mu\{ z:\,\tilde\tau(z)>M\}+\sum_{k=0}^M\mu\big(B_r(y)\setminus U^-(y,r,n(r))\big)\\
&\leq\mu\{ z:\,\tilde\tau(z)>M\}+(M+1)r^{wa+b}\mu(B_r(y)).\nonumber
\end{eqnarray}
Similarly to (\ref{a1}) we obtain that
\begin{equation}\label{ineq3}
\mu\{ z:\,\tilde\tau(z)>M\}\leq (1-p_y)^{M+1}+d_{TV}(\cL(\tilde\bfX_M),\,\cL(\bfY_M))
\end{equation}
where $\tilde\bfX_M=\{\tilde X_{k,\al},\, 0\leq k\leq M,\,\al=0,1\}$ and $\bfY_M=\{ Y_{k,\al},\,
0\leq k\leq M,\,\al=0,1\}$. In the same way as in (\ref{a5}) with $\tilde b_2$ in place of $b_2$ we estimate
\begin{eqnarray}\label{ineq4}
&d_{TV}(\cL(\tilde\bfX_M),\,\cL(\bfY_M))\\
&\leq 4MR(p_x+p_y)^2+2M\up(\mu(B_r(x)))+6M\phi(R-n(r))+2M(p^2_x+p_y^2).\nonumber
\end{eqnarray}

When $wa-b>0$ and $\rho(x,y,r)\to \rho\in(0,1)$ as $r\to 0$ then by~(\ref{e.approximation}) the pairwise
ratios of $\mu(U^\pm(x,r,n(r)))$, $\mu(U^\pm(y,r,n(r)))$, $\mu(B_r(x))$ and $\mu(B_r(y))$ are sandwiched
between positive constants. This together with Proposition \ref{6.3} yields that for $\mu\times\mu$ almost
all $x,y$ we can choose $M=M_{x,y}(r)\to\infty$ as $r\to 0$ so that
\begin{eqnarray}\label{to0}
 &M^2_{x,y}(r)r^{wa-b}\mu(B_r(x))\to 0,\,\, M_{x,y}(r)\up(B_r(x)))\to 0,\\
&\big(1-\mu(U^-(y,r,n(r)))\big)^{M_{x,y}(r)}\to 0\,\,\mbox{and}\,\,\frac {|\tilde\rho-\rho|}{\tilde\rho\rho}
\to 0\,\,\mbox{as}\,\, r\to 0.\nonumber
\end{eqnarray}
Observe, that the assumption (\ref{c.parameters}) enables us to choose $w$ and $\sig$ so that $wa-b>d$, $d>\sig$, $\sig>\frac wp$ and $\sig\be>d$. Indeed, these inequalities will hold true if we choose $\sig=d-\ve$
and $w=dp-\del$ for $\ve>0$ and $\del>0$ satisfying $p\ve<\del<dp-\frac {b+d}a$ and $\ve<d(1-\be^{-1})$,
which is possible in view of (\ref{c.parameters}) and since $\be>1$. With such $w$ and $\sig$ take
$R=R(r)=r^{-\sig}$. Then, as $r\to 0$, for $\mu\times\mu$-almost all $(x,y)$ we can choose
$M_{x,y}(r)\to\infty$ so that both (\ref{to0}) and
\[
M_{x,y}(r)R(r)(p_x+p_y)^2\to 0\,\,\,\mbox{and}\,\,\, M_{x,y}(r)\phi(R(r)-n(r))\to 0,
\]
hold true. This completes the proof of Theorem \ref{t.phimixing} in view of (\ref{ineq1})--(\ref{to0}).    \qed

\section{Geometric law limit for suspensions}\label{sec8}
In this section we will prove Theorem~\ref{t.suspension}.
  Given a well-placed set $U\subset \Om$, the set $\{(x,0): (x,k)\in U\mbox{ for some } k\}\subset \Omega_0$ can be naturally identified with $\Pi(U)$ by dropping the second coordinate and it will also be denoted by $\tU$. 
\begin{lemma}\label{l.tsig} Let $(\Om,\mu,T)$ be a discrete time suspension over a measure preserving dynamical system 
$(\tilde\Om,\tilde\mu,\tilde T)$ and $U,\, V$ be a pair of well-placed sets such that their projections to the base $\tilde U=\Pi(U)$
and $\tilde V=\Pi(V)$ are disjoint. Introduce the set $W=W_U=\{(x,i)\in\Om:\, i>0$ and there exists $j\geq i$ such that $(x,j)\in U\}$.
Then $\Sig_{U,V}(x,i)=\Sig_{\tilde U,\tilde V}(x,i)$ for any $(x,i)\in\Om\setminus W$ and $\Sig_{U,V}(x,i)=\Sig_{\tilde U,\tilde V}(x,i)+1$ 
if $(x,i)\in W$.
\end{lemma}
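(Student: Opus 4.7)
The plan is to exploit the tower structure of $\Omega$ by decomposing the $T$-orbit of $(x,i)$ into segments corresponding to passages through individual fibers of the suspension, and to match these segments against the iterates of $x$ under $\tilde T$. Starting from $(x,i)$, the orbit first ascends through the levels $i,i+1,\dots,R(x)-1$ of the fiber over $x$---a partial traversal when $i>0$---and then traverses each subsequent fiber over $\tilde T^m x$, $m\ge 1$, in full before proceeding to the next base point. Since $U$ and $V$ are well-placed, each fiber over a base point $y$ contains at most one $U$-point, at a uniquely determined level $k_U(y)$ when $y\in\tilde U$, and at most one $V$-point at level $k_V(y)$ when $y\in\tilde V$; the hypothesis $\tilde U\cap\tilde V=\emptyset$ further rules out any fiber containing both.

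Using this decomposition I would match the count of $U$-visits in the suspension with that of $\tilde U$-visits in the base. In the generic configuration the first $V$-hit of the suspension orbit occurs in the fiber over $\tilde T^{m^*}x$ with $m^* = \tilde\tau_{\tilde V}(x)$, so that the fibers over $\tilde T x,\dots,\tilde T^{m^*-1}x$ are traversed completely before the orbit is stopped. Each such complete fiber contributes exactly $\bbI_{\tilde U}(\tilde T^m x)$ to $\Sigma_{U,V}(x,i)$, and summing over $m=1,\dots,m^*-1$ reproduces all but the $k=0$ term of $\tilde\Sigma_{\tilde U,\tilde V}(x)$. The initial, possibly partial, fiber contributes an additional $1$ precisely when the orbit meets the level $k_U(x)$, which requires $x\in\tilde U$ and $k_U(x)\ge i$. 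Comparing this initial-fiber contribution with the $k=0$ term $\bbI_{\tilde U}(x)$ of $\tilde\Sigma_{\tilde U,\tilde V}(x)$ case by case---according to whether $i=0$, $(x,i)\in W_U$, or the $U$-level in the initial fiber has already been passed---yields the two formulas claimed in the lemma, with the $\pm 1$ adjustment accounting for the extra $U$-hit supplied by the partial initial fiber when $(x,i)\in W_U$.

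The main difficulty is the careful bookkeeping of the boundary configurations. Chief among these is the case $x\in\tilde V$, in which the first $V$-hit in the suspension may occur inside the initial fiber rather than after $m^*$ complete traversals, as well as the cases $(x,i)\in U$ or $(x,i)\in V$, in which the asymmetry between the convention $\Sigma_{U,V}=\sum_{k=0}^{\tau_V-1}\bbI_U\circ T^k$ (summed from $k=0$) and the convention $\tau_V\ge 1$ alters the counting at the starting position. In each such situation the well-placed hypothesis combined with $\tilde U\cap\tilde V=\emptyset$ forces the $k=0$ contributions on the suspension and the base to appear or vanish in parallel, so the identities of the lemma survive once the enumeration of cases is completed.
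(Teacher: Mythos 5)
Your plan mirrors the paper's own proof: decompose the $T$-orbit of $(x,i)$ into fiber passages, note that each complete traversal of a fiber over a $\tilde U$-point contributes one visit to $U$ and one visit to $\tilde U\subset\Om_0$ (the latter at level~$0$), and localize the $\pm 1$ discrepancy to the initial, possibly partial, fiber over~$x$ --- which is exactly what membership in $W$ records. That matching is the right core idea and the same one the paper uses.

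The gap is in your closing assertion that the identities survive the case $x\in\tilde V$. They do not, and the paper's proof quietly glosses over the same configuration by treating the hitting time of $V$ as a common cutoff for both counts. Writing $k_V(x)$ for the unique level $j$ with $(x,j)\in V$ when $x\in\tilde V$: if $x\in\tilde V$ and $i<k_V(x)$, then the first $V$-hit occurs inside the initial fiber at time $k_V(x)-i$, so $\tau_V(x,i)<\tau_{\tilde V}(x,i)$ and the two sums in the lemma are evaluated over genuinely different time windows. Since $\tilde U\cap\tilde V=\emptyset$, the initial fiber contains no $U$-point, hence $\Sig_{U,V}(x,i)=0$ and $(x,i)\notin W$; yet $\Sig_{\tilde U,\tilde V}(x,i)$, summed up to $\tau_{\tilde V}$, is strictly positive whenever $\tilde T^m x\in\tilde U$ for some $1\le m<\tilde\tau_{\tilde V}(x)$. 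Concretely, take the cyclic base $a\mapsto b\mapsto c\mapsto a$ with $R\equiv 2$, $U=\{(b,1)\}$, $V=\{(a,1)\}$; at $(a,0)$ one gets $\Sig_{U,V}=0$, $\Sig_{\tilde U,\tilde V}=1$, $(a,0)\notin W$. The exceptional set lies in $\{(x,i):x\in\tilde V\}$, whose $\mu$-measure is $O(\int_{\tilde V}R\,d\tmu)$ and vanishes in the application, so Theorem~\ref{t.suspension} is unaffected; but a careful proof of the lemma must exclude these points --- equivalently, restrict the stated identities to the complement of $\{(x,i):x\in\tilde V,\ i<k_V(x)\}$, after which the argument closes because $\tau_V$ and $\tau_{\tilde V}$ then fall in the same fiber and differ only by a $U$- and $\tilde U$-free stretch.
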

\begin{proof}
By the definition of a discrete time suspension and taking into account that $\tilde U\cap\tilde V=\emptyset$, each time the orbit $T^k(x,i),\, k\geq 0$
visits $\tilde U$ it must visit $U$ before it can visit $\tilde V$ or $V$. After each visit of $U$ the orbit either visits $\tilde V$ and then $V$ which
stops the counting or it visits $\tilde U$ and then $U$ and the process repeats itself. Since $U$ is well-placed, any orbit must visit ones $\tilde U$
 between two successive visits of $U$. Thus, if an orbit visits $\tilde U$ for the first time before it ever visited $U$ then there will be the same
  number of visits of each of them before the first visit of $V$. On the other hand, if $(x,i)\in W$ then the orbit $T^k(x,i),\, k\geq 0$ visits first
  $U$ before it visits $\tilde U$, and so in this case there will be one more visit to $U$ than to $\tilde U$.
\end{proof}

Now we are ready to prove Theorem~\ref{t.suspension}.
\begin{proof}[Proof of Theorem~\ref{t.suspension}]
Let $U_n$ and $V_n$, $n\geq 1$ be decreasing sequences of measurable subsets of $\Om$ such that $\tilde U_n\cap\tilde V_n=\emptyset$ 
and $\tilde\mu(\tilde U_n),\tilde\mu(V_n)\to 0$
as $n\to\infty$. Then the set $W_{U_n}$ appearing in Lemma~\ref{l.tsig} satisfies $\mu(W_{U_n})\leq\int_{\tilde U_n}R(x)d\tilde\mu(x)\to 0$.
This together with Lemma~\ref{l.tsig} yields that Theorem ~\ref{t.suspension} will follow if we show that 
\begin{equation}\label{8.1a}
\Sig_{\tU_n,\tV_n}\xRightarrow{\mu}Geo(\rho)\hspace{1cm}\mbox{ if }\hspace{1cm}\tilde\Sig_{\tU_n,\tV_n}\xRightarrow{\tmu} Geo(\rho)
\end{equation}
where the first convergence in distribution is on the probability space $(\Om,\mu)$ while the second one is on the probability space 
$(\tilde\Om,\tilde\mu)$. Next, define the probability measure $\hat\mu$ on $\tilde\Om$ by setting 
\[
\hat\mu(\Gam)=(\int_{\tilde\Om}R(x)d\tilde\mu(x))^{-1}\int_\Gam R(x)d\tilde\mu(x)
\]
for any measurable set $\Gam\subset\tilde\Om$. Observe that for each $x\in\tilde\Om$ the number of visits of $U_n$ by the 
orbits $T^k(x,i),\, k\geq 1$ until the first visit of $\tilde V_n$ is the same for all $i=0,1,...,R(x)-1$. It follows that it
suffices to show that the convergence $\Sig_{\tU_n,\tV_n}\xRightarrow{\mu}Geo(\rho)$ is equivalent to the convergence
$\Sig_{\tU_n,\tV_n}\xRightarrow{\hat\mu}Geo(\rho)$ where the latter is considered on the probability space $(\tilde\Om,\hat\mu)$.
But if $\tilde\Sig_{\tU_n,\tV_n}\xRightarrow{\tmu} Geo(\rho)$ then $\Sig_{\tU_n,\tV_n}\xRightarrow{\hat\mu}Geo(\rho)$ holds true
by Theorem 1 from \cite{Z07}, completing the proof of Theorem~\ref{t.suspension}.
\end{proof}

\section{Applications}\label{sec9}
\subsection{Gibbs-Markov systems}
As the first example, we consider a Gibbs-Markov map $T$ on a Lebesgue space $(X, \mu)$. Recall that a map $T$ is called {\em Markov} if there is a countable measurable partition $\cA$ on $X$ with $\mu(A)>0$ for all $A\in \cA$, such that for all $A\in \cA$, $T(A)$ is injective and can be written as a union of elements in $\cA$. Write $\cA^n=\bigvee_{j=0}^{n-1}T^{-j}\cA$ as before, it is also assumed that $\cA$ is (one-sided) generating.

Fix any $\gamma\in(0,1)$ and define the metric $d_\gamma$ on $X$ by $d_\gamma(x,y) = \gamma^{s(x,y)}$, where $s(x,y)$ is the largest positive integer $n$ such that $x,y$ lie in the same $n$-cylinder. Define the Jacobian $g=JT^{-1}=\frac{d\mu}{d\mu\circ T}$ and $g_k = g\cdot g\circ T \cdots g\circ T^{k-1}$.

The map $T$ is called {\em Gibbs-Markov} if it preserves the measure $\mu$, and also satisfies the following two assumptions:\\
(i) The big image property: there exists $C>0$ such that $\mu(T(A))>C$ for all $A\in \cA$.\\
(ii) Distortion: $\log g|_A$ is Lipschitz for all $A\in\cA$.

%For example, if $T$ is modeled by Young's tower with a base $\Omega_0$, then the return map $\hat{T} = T^R:\Omega_0\to\Omega_0$ is a Gibbs-Markov map with respect to the invariant measure $\mu|_{\Omega_0}=(h\nu)|_{\Omega_0}$ and the partition $\{\Omega_{0,i}\}$, since $\hat{T}(\Omega_{0,i}) =\Omega_0$.

 In view of (i) and (ii), there exists a constant $D>1$ such that for all $x,y$ in the same $n$-cylinder, we have the following distortion bound:
$$
\left|\frac{g_n(x)}{g_n(y)}-1\right|\le D d_\gamma(T^nx,T^ny),
$$
and the Gibbs property:
$$
D^{-1}\le \frac{\mu(A_n(x))}{g_n(x)}\le D.
$$
It is well known (see, for example, Lemma 2.4(b) in~\cite{MN05}) that Gibbs-Markov systems are exponentially $\phi$-mixing. Therefore we have the following corollaries of Theorem~\ref{thm2.1} and~\ref{t.phimixing}:
\begin{theorem}\label{t.9.1}
Let $T$ be a Gibbs-Markov map on $(X,\mu)$ with finite entropy $-\sum_{A\in\cA}\mu(A)\log \mu(A)<\infty$.
Let $\{m(n)\}_{n\ge1}\subset\mathbb{N}\setminus\{0\}$ be a sequence satisfying $|m(n)-n|=o(n)$
as $n\rightarrow\infty$. Then for $\mu\times \mu$-a.e. $(\omega,\eta)\in X\times X$,
\begin{equation*}
\underset{n\to\infty}{\lim}\:d_{TV}(\mathcal{L}(\Sig^{\om,\eta}_{n,m(n)}),
Geo(\frac{\mu(A_{m(n)}^{\eta})}{\mu(A_{m(n)}^{\eta})+\mu(A_{n}^{\omega})}))=0\:.
\end{equation*}
In particular, if
\begin{equation*}
\lim_{n\to\infty}\frac {\mu(A_n^\om)}{\mu(A^\eta_{m(n)})}=\la
\end{equation*}
then $\cL(\Sig_{n,m(n)}^{\om,\eta})$ converges in total variation as
$n\to\infty$ to the geometric distribution with the parameter
$(1+\la)^{-1}$.
\end{theorem}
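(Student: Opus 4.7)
The plan is to reduce Theorem~\ref{t.9.1} directly to Corollary~\ref{cor2.2} by showing that a Gibbs--Markov system fits the symbolic setup of Section~\ref{sec2.1} and satisfies all its hypotheses.

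First I would set up the symbolic coding. Since $\cA$ is a countable measurable partition with $\mu(A)>0$ for every $A\in\cA$ which is one-sided generating for $T$, enumerate $\cA=\{A^1,A^2,\ldots\}$ and define the itinerary map $\pi:X\to\cA^{\bbN}$ by $\pi(x)=(\om_0,\om_1,\ldots)$ where $\om_k=j$ iff $T^k x\in A^j$. Because $\cA$ is generating, $\pi$ is injective off a null set and conjugates $(X,T,\mu)$ to the left shift on $(\cA^{\bbN},P)$ with $P=\pi_*\mu$; moreover, the cylinder set $[a_0,\ldots,a_{n-1}]$ in $\cA^{\bbN}$ corresponds exactly to the element of the join $\cA^n=\bigvee_{j=0}^{n-1}T^{-j}\cA$ determined by $a_0,\ldots,a_{n-1}$. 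Under this identification $A_n^\om$ from Corollary~\ref{cor2.2} coincides with the $n$-cylinder at $\om$ used in the theorem, and $\Sig_{n,m(n)}^{\om,\eta}$ has the same distribution on both sides.

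Next I would verify the two hypotheses of Corollary~\ref{cor2.2}. The finite entropy hypothesis $-\sum_{A\in\cA}\mu(A)\log\mu(A)<\infty$ is part of the assumption of Theorem~\ref{t.9.1}. For the exponential $\phi$-mixing bound $\phi(n)\le\be^{-1}e^{-\be n}$, I would invoke the standard fact that every Gibbs--Markov map is exponentially $\phi$-mixing with respect to the generating partition $\cA$; this is the content of the cited Lemma~2.4(b) in~\cite{MN05}, and it follows from the big image property together with the Gibbs/distortion bound $D^{-1}\le\mu(A_n(x))/g_n(x)\le D$, which yields uniform comparability of conditional measures on $n$-cylinders with their unconditional values and hence exponential decay of $\phi(n)$ in $n$ at rate determined by the distortion constant.

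Once these two assumptions are in place, Corollary~\ref{cor2.2} applied to the shift $(\cA^{\bbN},P,T)$ yields immediately
\[
\lim_{n\to\infty} d_{TV}\bigl(\mathcal{L}(\Sig^{\om,\eta}_{n,m(n)}),\,\mathrm{Geo}\bigl(\tfrac{\mu(A_{m(n)}^\eta)}{\mu(A_{m(n)}^\eta)+\mu(A_n^\om)}\bigr)\bigr)=0
\]
for $\mu\times\mu$-a.e.\ $(\om,\eta)$, and the special case under~\eqref{2.7} follows by noting that if the ratio $\mu(A_n^\om)/\mu(A_{m(n)}^\eta)\to\la$ then the geometric parameter tends to $(1+\la)^{-1}$, so continuity of the total variation distance in the parameter of the geometric distribution (which is elementary, as in the estimate of $A_4$ in Section~\ref{sec4}) finishes the argument. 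Essentially no obstacle arises: the only point requiring a moment of care is checking that the symbolic $\phi$-mixing coefficient defined via $\cF_{0,m}$ and $\cF_{m+n,\infty}$ in Section~\ref{sec2.1} matches the dynamical definition given by correlations over the partition $\cA$, but these are literally the same object under the coding $\pi$.
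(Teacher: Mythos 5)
Your proposal is correct and takes essentially the same route as the paper: the paper likewise introduces the symbolic coding implicitly, cites Lemma 2.4(b) of \cite{MN05} for exponential $\phi$-mixing of Gibbs--Markov systems, notes that the finite-entropy hypothesis is assumed, and then reads off the result as a direct corollary of Corollary~\ref{cor2.2}. The only superfluous step in your write-up is the final continuity argument for $d_{TV}$ in the geometric parameter, since the second assertion you are proving is already contained verbatim in Corollary~\ref{cor2.2}.
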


\begin{theorem}\label{t.9.2}
	Let $X$ be a compact metric space. Assume that there exists a countable generating partition $\cA$, such that the measure preserving system $(T,\mu)$ is Gibbs-Markov. Also assume that Assumptions A1, A3 and A4 are satisfied with
	\begin{equation*}
	p>\frac{d+b}{ad}.
	\end{equation*}
	Then for $\mu \times \mu$ almost every $(x,y)\in\bfM\times\bfM$, such that $\rho(x,y,r)\to \rho(x,y)\in(0,1)$ as
	$r\to 0$,
	$$
	\lim_{r\to 0}d_{TV}(\cL(\Sxy),Geo(\rho(x,y))) = 0.
	$$
\end{theorem}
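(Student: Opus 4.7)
The plan is to obtain Theorem~\ref{t.9.2} as a direct specialization of Theorem~\ref{t.phimixing}. The only hypothesis of Theorem~\ref{t.phimixing} not explicitly assumed in the statement of Theorem~\ref{t.9.2} is A\ref{mixing} (polynomial $\phi$-mixing at some rate $\be>1$); everything else, namely A\ref{diameter}, A\ref{dimension}, A\ref{regularity}, together with the parameter inequality $p>\frac{d+b}{ad}$ from~\eqref{c.parameters}, is assumed directly. So the sole task is to verify A\ref{mixing} from the Gibbs--Markov hypothesis.

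For this I would invoke the standard fact, already recalled in the excerpt as Lemma~2.4(b) of~\cite{MN05}, that every Gibbs--Markov map is exponentially $\phi$-mixing relative to the filtration generated by the defining partition $\cA$ and its iterated joins $\cA^n$. Concretely, there exist $C>0$ and $\theta\in(0,1)$ with $\phi(n)\le C\theta^n$ for every $n\ge 1$. Since exponential decay dominates any polynomial one has $\phi(n)\le C_\be n^{-\be}$ for any $\be>1$, and A\ref{mixing} is satisfied (in fact with every admissible rate).

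One small sanity check: the $\phi$-mixing coefficient used in the Gibbs--Markov literature and the one appearing in Section~\ref{sec2.2} are both defined by~\eqref{2.1} applied to the $\sig$-algebras $\cF_{0,n-1}=\sig(\cA^n)$, with the same partition $\cA$ in both places, so the two notions coincide and no translation is required. With A\ref{diameter}--A\ref{regularity} and A\ref{mixing} all now in force, Theorem~\ref{t.phimixing} applies on the nose and delivers $\lim_{r\to 0}d_{TV}(\cL(\Sxy),Geo(\rho(x,y)))=0$ for $\mu\times\mu$-a.e.\ $(x,y)$ such that $\rho(x,y,r)\to\rho(x,y)\in(0,1)$. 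Because the reduction is a single invocation, there is no real obstacle; the only point where one has to be careful is identifying the two $\phi$-mixing frameworks, and as just noted that identification is tautological from the setup.
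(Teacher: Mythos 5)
Your proposal is correct and matches the paper's approach exactly: the paper also reduces Theorem~\ref{t.9.2} to Theorem~\ref{t.phimixing} by invoking (via Lemma 2.4(b) of~\cite{MN05}) the exponential $\phi$-mixing of Gibbs--Markov systems to supply Assumption A\ref{mixing}, with the remaining hypotheses A\ref{diameter}, A\ref{dimension}, A\ref{regularity} and the inequality~\eqref{c.parameters} taken directly from the statement.
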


\begin{remark}
	In many cases (for example, those in~\cite{Go}), the metric $d_\gamma$ is equivalent to the metric of $X$. Then Assumption A1 is satisfied with $\diam \cA^n \le \gamma^n$, and the assumption $p>\frac{d+b}{ad}$ is redundant.
\end{remark}

\subsection{Conformal repellers} A conformal repeller
is a maximal compact set $\Omega\subset M$ so that $T$ acts conformally
on $\Omega$ and is expanding, that is there exists a $\beta>1$ so that
$|DT^kv|\ge \beta^k$ for all large enough $k$ and all $v\in T_xM\,\forall x\in\Omega$.

\begin{theorem}
Let $\Omega\subset M$ be a conformal repeller for the $C^{1+\alpha}$-map $T:M\circlearrowleft$
and let $\mu$ be an equilibrium state for a H\"older continuous potential
$f:\Omega\to\mathbb{R}$. Then the conclusion of Theorem~\ref{t.phimixing} is valid.
\end{theorem}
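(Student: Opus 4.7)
The plan is to verify that Assumptions A1--A4 of Theorem~\ref{t.phimixing} hold for $(\Omega, T, \mu)$ so that the theorem applies directly. Since $T$ is expanding, $C^{1+\alpha}$, and $\Omega$ is a compact conformal repeller, Bowen's construction yields a finite Markov partition $\cA$ of arbitrarily small diameter. Because $|DT^k v|\ge \beta^k$ uniformly on $\Omega$, the mean value theorem gives $\diam \cA^n \le C\beta^{-n}$, which is super-polynomial. Thus A\ref{diameter} holds with $p=+\infty$, so by the last sentence of Theorem~\ref{t.phimixing} the restriction~(\ref{c.parameters}) becomes vacuous. In particular, $\cA$ is a finite generating partition with finite entropy, so the symbolic reduction of Section~\ref{sec2.2} applies.

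For A\ref{mixing}, equilibrium states of H\"older continuous potentials on subshifts of finite type (which model $(\Omega,T,\mu)$ via the Markov partition) have an exponential spectral gap for the Ruelle transfer operator. By a standard argument (see \cite{Bo}) this yields exponential $\phi$-mixing of $\mu$ with respect to the $\sigma$-algebras generated by $\cA^n$, so A\ref{mixing} holds with any $\beta>1$. For A\ref{dimension}, the conformality of $T$ together with the Gibbs property of $\mu$ implies that $\mu$ is exact dimensional with $d=h_\mu(T)/\chi_\mu(T)$ (Bowen's formula); this is a classical result of Bowen, with Pesin--Weiss later establishing exact dimensionality in full generality for conformal equilibrium states.

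The nontrivial point is A\ref{regularity}, the annular decay property. This follows from the Gibbs property combined with conformality and bounded distortion. Concretely, by conformality each Bowen ball $B_n(x,\epsilon)=\{y:\,d(T^k x,T^k y)<\epsilon,\,0\le k<n\}$ is geometrically comparable to a Euclidean ball of radius $\epsilon\,|DT^n(x)|^{-1}$, and the Gibbs inequality gives $\mu(B_n(x,\epsilon))\asymp \exp(S_n f(x)-nP(f))$. Choosing $n=n(r)$ so that $|DT^n(x)|^{-1}\asymp r$, one covers the annulus $B_{r+\delta}(x)\setminus B_{r-\delta}(x)$ by a uniformly bounded number of such Bowen balls, and the H\"older continuity of $f$ together with the uniform expansion translates the thin annular width into a power-law factor $(\delta/r)^a$ for some $a>0$. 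This gives A\ref{regularity} with appropriate constants $a,b$; the argument is carried out in detail in Pesin's monograph on dimension theory and in works of Barreira--Schmeling on conformal repellers.

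The main obstacle is precisely the verification of A\ref{regularity}: while the annular decay property is folklore for conformal equilibrium states, writing out the bounded-distortion/Gibbs estimate carefully is where all the geometric work lies. Once A1--A4 are in place, and since~(\ref{c.parameters}) is vacuous due to the super-polynomial decay in A\ref{diameter}, the conclusion of Theorem~\ref{t.phimixing} follows without further modification.
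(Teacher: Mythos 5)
Your treatment of A1--A3 coincides with the paper's: a Markov partition of small diameter gives $\diam\cA^n\le C\beta^{-n}$ (so A1 holds with $p=+\infty$ and condition~(\ref{c.parameters}) is vacuous); the spectral gap of the transfer operator (equivalently, $\psi$-mixing of equilibrium states, which dominates $\phi$-mixing) gives A2; and exact dimensionality is Theorem~21.3 of \cite{P97}, which is what the paper cites. You also correctly flag A4 as the only nontrivial verification.

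The sketch you offer for A4, however, does not yield the required estimate. Covering the annulus $B_{r+\delta}(x)\setminus B_{r-\delta}(x)$ by a \emph{uniformly bounded} number of Bowen balls of diameter comparable to $r$ can only give $\mu\bigl(B_{r+\delta}(x)\setminus B_{r-\delta}(x)\bigr)\lesssim\mu(B_r(x))$, with no dependence on the annular width $\delta$; the crucial factor $(\delta/r)^a$ comes from counting \emph{many} small Bowen balls of scale $\sim\delta$ that meet a thin shell, and the H\"older continuity of $f$ plays no direct role in that geometric counting. As written the argument does not close. The paper takes a different and shorter route to A4: it quotes that $\mu$ is diametrically regular (doubling) on a conformal repeller from Pesin--Weiss \cite{PW97}, and then applies Buckley's theorem \cite{Bu99} that a doubling measure has the annular decay property, giving $\mu(B_{r+r^w}(x)\setminus B_r(x))/\mu(B_r(x))=\mathcal{O}(r^{(w-1)\delta})$ for some $\delta>0$ and any $w>1$, which is exactly A4. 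If you want to keep the direct Bowen-ball approach you must redo the covering with balls of scale $\delta$ and track their number; otherwise, cite the doubling and annular-decay results as the paper does.
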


\begin{proof}
We verify Assumptions A1--A4 using the fact that Markov partitions $\mathcal{A}$ of arbitrarily small diameter
 can be constructed here. Let $\mathcal{A}$ be such a partition. Then:\\
Assumption A1 holds since the map is uniformly expanding, so $\diam\cA^n\le \beta^{-n}$.\\
Assumption A2 follows from the fact that the equilibrium states are $\psi$-mixing with respect to the partition $\cA$ at exponential speed, thus left $\phi$-mixing.\\
Assumption A3 is shown in Theorem 21.3 of~\cite{P97} that every equilibrium state on a conformal repeller has exact dimension.\\
Assumption A4 is satisfied for any $w>1$ as $\mu$ is diametrically regular~\cite{PW97} and thus also has the annular decay property~\cite{Bu99}. This yields $\frac{\mu(B_{r+r^w}(x)\setminus B_r(x))}{\mu(B_r(x))}=\mathcal{O}(r^{(w-1)\delta})\to0$
 for some $\delta>0$. \\
\end{proof}

\subsection{Young's towers} First introduced by Young~\cite{Yo1},~\cite{Yo2}, Young's towers (or {\em Gibbs-Markov-Young structure}) is a useful tool to study the statistical property of $C^{1+\alpha}$ systems that are non-uniformly hyperbolic. When the systems is non-invertible, a Young's tower can be viewed as a discrete time suspension over a Gibbs-Markov system,\footnote{When $T$ is a diffeomorphism,  in order to obtain a Gibbs-Markov system one needs to take the quotient over the stable leaves.} such that the roof function $R$ (in this case, it is usually call the {\em return time function}) is integrable with respect to the measure $\tmu$.\footnote{Alternatively one can assume that the roof function is integrable w.r.t. a reference measure $m$, for which the map $\tT$ has ``good'' Jacobian. Under these conditions, it is shown in~\cite{Yo2} that there exists an invariant measure $\tmu$, absolutely continuous w.r.t. $m$.}
As an immediate corollary of Theorem~\ref{t.suspension},~\ref{t.9.1} and~\ref{t.9.2}, we have:

\begin{theorem}
	Let $(\Om, \mu, T)$ be a Young's tower such that the return time function $R$ is integrable. Assume that $\{U_n\}, \{V_n\} $ are two sequences of well-placed sets. 
	
(i) If $\tU_n = A^\om_n, \tV_n = A^\eta_{m(n)}$ for some $\om, \eta$ and $m(n)$ satisfy the assumptions of Theorem~\ref{t.9.1} and the base system 
has finite entropy, then $\Sig_{U_n,V_n}$ converges in distribution to the geometric distribution $Geo((1+\la)^{-1})$, where $\lambda$ is given by
	\begin{equation*}
	\lim_{n\to\infty}\frac {\mu(A_n^\om)}{\mu(A^\eta_{m(n)})}=\la;
	\end{equation*}
	
 (ii) If the base system $(\tom,\tmu,\tT)$ satisfies A1, A3 and A4, and $\tU_n = B_{r_n}(x), \tV_n = B_{r_n}(y)$ satisfy the assumptions of Theorem~\ref{t.9.2} for some sequence of positive real numbers $\{r_n\}$ with $r_n\to 0$, then $\Sig_{U_n,V_n}$ converges in distribution to  $Geo(\rho(x,y))$.
\end{theorem}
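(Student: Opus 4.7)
The plan is to recognize that the statement is essentially the concatenation of Theorem~\ref{t.suspension} with Theorems~\ref{t.9.1} and~\ref{t.9.2}, and to verify that the hypotheses of the latter two results are indeed satisfied by the base of a Young's tower so that Theorem~\ref{t.suspension} can be invoked directly.

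First I would recall from Young's construction that a (non-invertible) Young tower is, by definition, a discrete-time suspension $(\Om,\mu,T)$ over an ergodic Gibbs-Markov system $(\tom,\tmu,\tT)$ with a return time function $R$ which is, by assumption, integrable with respect to $\tmu$. Hence $\mu=(\tmu\times\cN)/\tmu(R)$ is a $T$-invariant probability measure, and the framework of Section~\ref{sec8} applies verbatim. Since $\{U_n\}$ and $\{V_n\}$ are assumed to be well-placed, all the structural requirements of Theorem~\ref{t.suspension} are in place, and it only remains to check that the base system and the projected sets $\tU_n,\tV_n$ fall into the symbolic or metric setup of Sections~\ref{sec2.1} or \ref{sec2.2}, respectively.

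For case (i), the Gibbs-Markov base carries a countable generating Markov partition $\cA$ with finite entropy (by hypothesis), and, as noted in Section~\ref{sec9}, Gibbs-Markov systems are exponentially $\phi$-mixing with respect to the $\sig$-algebras generated by $\cA^n$. Thus $(\tom,\tmu,\tT)$ fits the symbolic setup of Section~\ref{sec2.1}, and the hypotheses of Corollary~\ref{cor2.2}, hence of Theorem~\ref{t.9.1}, are satisfied. This gives $\tilde\Sig_{\tU_n,\tV_n}\xRightarrow{\tmu}Geo((1+\la)^{-1})$ for the ratio $\la$ of the measures of the cylinders, and Theorem~\ref{t.suspension}(i) then transfers this geometric limit to $\Sig_{U_n,V_n}$ on $(\Om,\mu)$. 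For case (ii), A2 on the base follows again from the exponential $\phi$-mixing of Gibbs-Markov systems, while A1, A3, A4 are assumed. Hence the hypotheses of Theorem~\ref{t.9.2} (equivalently Theorem~\ref{t.phimixing}) hold, yielding $\tilde\Sig_{\tU_n,\tV_n}\xRightarrow{\tmu}Geo(\rho(x,y))$, which is then lifted to the tower by Theorem~\ref{t.suspension}(ii).

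The main (and essentially only) obstacle is purely bookkeeping: one must make sure that the notion of well-placed set and the passage from base to tower used in Theorem~\ref{t.suspension} are compatible with how the sets $U_n,V_n$ are formed above cylinders in case (i) and above balls in case (ii); the disjointness condition $\tU_n\cap\tV_n=\emptyset$ (for all sufficiently large $n$) and the fact that $\tmu(\tU_n),\tmu(\tV_n)\to 0$ are immediate from the symbolic hypothesis $b\ne a$ and from $r_n\to 0$ together with $x\ne y$, respectively, and the residual terms of the form $\mu(W_{U_n})\le\int_{\tU_n}R\,d\tmu$ go to zero by integrability of $R$. Once these verifications are carried out, the conclusions of (i) and (ii) follow immediately.
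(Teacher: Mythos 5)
Your proposal is correct and matches the paper's own (very terse) treatment: the paper simply states this result as "an immediate corollary of Theorem~\ref{t.suspension}, \ref{t.9.1} and \ref{t.9.2}" without further argument, and your write-up supplies exactly the verifications the authors had in mind — that the Gibbs-Markov base of a Young tower is exponentially $\phi$-mixing so Theorems \ref{t.9.1}/\ref{t.9.2} apply to it, that the well-placed and disjointness hypotheses of Theorem~\ref{t.suspension} hold, and that the lift from base to tower is then automatic. No gaps.
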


There is plenty of dynamical systems which can be modeled by Young's towers with the integrable return map being the first return map to the set which
 serves as a base. Then the dynamical system and its Young tower representation are isomorphic and limit theorems for both systems are equivalent.
 The corresponding one dimensional examples include uniform expanding piecewise $C^2$-map  with the Markov property, Gauss map, Pomeau--Manneville maps 
 (also known as the intermittent map) and certain unimodal maps (see, for instance, \cite{MN05}). For a more general construction in higher dimensions see \cite{Go}.
 
 \subsection{A probabilistic application}
 Let $X_n,\, n\geq 0$ be a Markov chain on a countable state space $\cA$ with an invariant probability measure $\mu$.
 Suppose that $X$ is ergodic, aperiodic and satisfies the Doeblin condition: $\exists\Gam\subset\cA$ with $\mu(\Gam)=1$,
 $\exists\ve\in(0,1),\,\exists n\geq 1$ such that $\forall c\in\Gam,\,\forall\Del\subset\cA$ with $\mu(\Del)<\ve$ the
 $n$-step transition probability satisfies $P(n,x,\Del)=P\{ X_n\in\Del\,|\, X_0=x\}\leq 1-\ve$. It is known (see, for instance
 \cite{Br}) that this property is equivalent to $\phi$-mixing with an exponentially fast decaying $\phi(n)$ coefficient. For 
 given sequences $\xi=(\xi_0,\xi_1,...)$ and $\eta=(\eta_0,\eta_1,...)$ with $\xi_i,\eta_i\in\cA$ we set $A^\xi_n=\{\om\in\cA^\bbN:\,
 \om_i=\xi_i,\, i=0,1,...,n-1\}$, $\tau_n^\eta(\om)=\min\{ k\geq 0:\,(X_k,X_{k+1},...,X_{k+n-1})=(\eta_0,\eta_1,...,\eta_{n-1})\}$ and
 $\Sig_{n,m}^{\xi,\eta}=\sum_{k=0}^{\tau_n^\eta-1}\bbI_{A^\xi_n}(X_k,X_{k+1},...,X_{k+n-1})$. Then, assuming that $\xi,\eta$ are nonperiodic
 and not shifts of each other we obtain from Corollary \ref{cor2.2} convergence in distribution of $\Sig_{n,m}^{\xi,\eta}$ to a geometric random
  variable with the parameter $(1+\la)^{-1}$ provided $\frac {\mu(A^\xi_n)}{\mu(A_n^\eta)}\to\la$ as $n\to\infty$.

%\bibliography{matz_nonarticles,matz_articles}

\begin{thebibliography}{Bow75}

\itemsep=\smallskipamount

\bibitem{Ab}
M. Abadi, {\em Exponential approximation for hitting times in mixing processes},
Math. Phys. Electron. J. 7, no.2 (2001)


\bibitem{AV}
M. Abadi and N. Vergne, {\em Sharp errors for point-wise Poisson
approximations in mixing processes}, Nonlinearity 21 (2008), 2871--2885.

\bibitem{AS}
M. Abadi and B. Saussol, {\em Hitting and returning into rare events
for all alpha-mixing processes}, Stoch. Process. Appl. 121 (2011), 314--323.



\bibitem{AGG}
R. Arratia, L. Goldstein and L. Gordon, {\em Two moments suffice for Poisson
approximations: the Chen--Stein method}, Ann. Probab. 17 (1989), 9--25.




\bibitem{Bi} P.Billingsley, {\em Probability and Measure,} 3d ed., J.Willey,
 New York, 1995.


\bibitem{Bo}
R. Bowen, {\em Equilibrium States and the Ergodic Theory of Anosov
Diffeomorphisms}, Lecture Notes in Math. 470, Springer--Verlag, Berlin, 1975.

\bibitem{Br} R.C. Bradley, {\em Introduction to Strong Mixing Conditions,}
Kendrick Press, Heber City, 2007.

\bibitem{Bu99} S M Buckley, {\em Is the maximal function of a Lipschitz function continuous?}
 Ann.\ Acad.\ Scient.\ Fenn. Math.\ 24 (1999), 519--528.

% \bibitem{BG}  A. Boyarski and P. Gora, {\it Invariant Measures and Dynamical Systems
%in One Dimension}, Probability and its Applications, Birkh\"auser Boston 1997.


\bibitem{DWY}
M. Demers, P.Wright and L.-S. Young, {\em Entropy, Lyapunov exponents and
escape rates in open systems}, Ergod. Th.\& Dynam. Sys. 30 (2012), 1270--1301.

%\bibitem{FP} A Ferguson and M Pollicott, {\em Escape Rates for Gibbs Measures},
%Ergod.\ Th.\ \& Dynam.\ Syst. 32 (2012), 961--988.


\bibitem{GRS}
S. Galatolo, J. Rousseau and B. Saussol, {\em Skew products, quantitative recurrence, shrinking targets and decay of correlations}, Ergod. Th.\& Dynam. Sys. 35 (2015), 1814--1845.


\bibitem{GS}
A. Galves and B. Schmitt, {\em Inequalities for hitting times in mixing dynamical
systems}, Random Comput. Dyn. 5 (1997), 337--348.

\bibitem{Go}
S. Gou\"ezel, Decay of correlations for nonuniformly expanding systems. {\em Bull. Soc. Math. France} 134 (2006), no. 1, 1--31.

\bibitem{He}
L. Heinrich, {\em Mixing properties and central limit theorem for a class of
non-identical piecewise monotonic $C^2$-transformations},
Mathematische Nachricht. 181 (1996), 185--214.


\bibitem{HP}
N.T.A. Haydn and Y. Psiloyenis, {\em Return times distribution for Markov
towers with decay of correlations}, Nonlinearity 27 (2014), 1323--1349.

%\bibitem{HW} N Haydn and K Wasilewska, {\em Limiting distribution and error terms for the number
% of visits to balls in non-uniformly hyperbolic dynamical systems},
%Discr.\ Cont.\ Dynam.\ Syst.\  36(5) (2016), 2585--2611.


%\bibitem{HY16}
% N. Haydn and F. Yang, {\em Entry times distribution for mixing systems}. J. Stat. Phys. 163 (2016), no. 2, 374--392.



%\bibitem{HY17} N. Haydn and F. Yang, {\em Entry times distribution for dynamical balls on metric spaces},
% J. Stat. Phys. 167 (2017),  no. 2, 297--316.

\bibitem{HY18}  N. Haydn and F. Yang, {\em Local Escape Rates for $\varphi$-mixing Dynamical Systems}. To appear on {\em Ergodic Theory and Dynamical Systems}. DOI: 10.1017/etds.2019.21.





%\bibitem{Ki} Yu. Kifer, {\em Nonconventional Poisson limit theorems},
%Israel J. Math. 195 (2013), 373--392.


\bibitem{KR}
Yu. Kifer and A. Rapaport, {\em Poisson and compound Poisson approximations
in conventional and nonconventional setups}, Probab. Th. Relat. Fields 160
(2014), 797--831.

\bibitem{KR1}
Yu. Kifer and A. Rapaport, {\em Geometric distribution for multiple returnes until
a hazard}, 32 (2019), 1525--1545.

\bibitem{MN05}
I. Melbourne and M. Nicol, {\em Almost sure invariance principle for nonuniformly
hyperbolic systems}, Commun. Math. Phys. 260 (2005), 131--146.

\bibitem{PS}
F. P\`ene and B. Saussol, {\em Poisson law for some non-uniformly hyperbolic dynamical systems with polynomial rate of mixing},  Ergod. Th.\& Dynam. Sys. 36 (2016), 2602--2626.


\bibitem{P97} Y. Pesin, {\em Dimension Theory in Dynamical Systems: contemporary views and applications},
University of Chicago Press, Chicago, IL, 1997.


\bibitem{PW97} Y Pesin and H Weiss, {\em  A Multifractal Analysis of Equilibrium
Measures for Conformal Expanding Maps and Moran-like Geometric
Constructions}, J. Stat.\ Phys.\ 86 (1997), 233--275.


\bibitem{Pe}
K. Petersen, {\em Ergodic Theory}, Cambridge Univ. Press, Cambridge, 1983.

\bibitem{RS}
J. Rousseau and B. Saussol, {\em Poincar\'e recurrence for observations}, Trans. Amer. Math. Soc. 362 (2010), 5845--5859.

%\bibitem{RS92}
%M. Rychlik and E. Sorets,
%{\em Regularity and other properties of absolutely continuous invariant measures for the
%quadratic family}, Commun. Math. Phys. 150  (1992), 217--236.


\bibitem{Yo1}  L-S Young, Statistical properties of dynamical systems with some hyperbolicity;
Annals of Math.\ 7 (1998), 585--650.

\bibitem{Yo2}
L.-S. Young, {\em Recurrence time and rate of mixing}, Israel J. Math. 110 (1999),
153--188.


\bibitem{Z07}
R. Zweim\"uller, {\em Mixing limit theorems for ergodic	transformations}, J. Theoret. Probab. 20
(2007), no. 4, 1059--1071.


\end{thebibliography}
%\bibliographystyle{alpha}

\end{document}